\documentclass[a4paper,12pt]{amsart}
\usepackage[utf8]{inputenc}
\usepackage{color, comment, bm, enumerate}

\usepackage{tikz}
\usepackage{todonotes}

\usepackage[foot]{amsaddr} 

\newtheorem{definition}{Definition}
\newtheorem{proposition}[definition]{Proposition}
\newtheorem{theorem}[definition]{Theorem}

\newtheorem{lemma}[definition]{Lemma}
\newtheorem{remark}[definition]{Remark}
\newtheorem{assumption}{Assumption}

\renewcommand{\d}{\mathrm{d}}
\newcommand{\ep}{\varepsilon}

\newcommand{\wh}{\widehat}
\newcommand{\wt}{\widetilde}
\newcommand{\pr}{\prime}

\newcommand{\R}{\mathbb{R}}

\newcommand{\lt}{\left}
\newcommand{\rt}{\right}
\newcommand{\PP}{\mathbb{P}}
\newcommand{\EE}{\mathbb{E}}

\begin{document}

\title[FBSDEs with distributional coefficients]{Forward-backward SDEs with distributional coefficients}
\author{Elena Issoglio$^{1,*}$}
\address{$^1$Department of Mathematics, University of Leeds, Leeds, LS2 9JT, UK. \texttt{E.Issoglio@leeds.ac.uk}}
\author{Shuai Jing$^{2,\circ}$}
\address{$^2$Department of Management Science, Central University of Finance and Economics, Beijing, 100081, China. \texttt{jing@cufe.edu.cn} }
\address{$^\circ$Partially funded by NSFC No.71772195, 51609270.}
\address{$^*$Corresponding author}

\subjclass[2010]{Primary 60H10; Secondary 35K55, 60H30, 35K10.}

\keywords{Forward-backward stochastic differential equations, distributional coefficients, non-linear Feynman-Kac formula, weak solutions, virtual solutions, mild solutions, Sobolev spaces, singular FBSDEs, singular PDEs}

\begin{abstract}
Forward-backward stochastic differential equations (FBSDEs) have attracted significant attention since they were introduced 30 years ago, due to their wide range of applications, from solving non-linear PDEs to pricing American-type options. Here, we consider two new classes of multidimensional FBSDEs with distributional coefficients (elements of a Sobolev space with negative order). We introduce a suitable notion of a solution and show its existence and uniqueness. We establish a link with PDE theory via a nonlinear Feynman-Kac formula. The associated semi-linear second order parabolic PDE is the same for both FBSDEs, also involves distributional coefficients and has not previously been investigated.
\end{abstract}
\maketitle
\section{Introduction} \label{sc: introduction}
In this paper we study  systems of multidimensional forward-backward stochastic differential equations (forward-backward SDEs or FBSDEs for shortness) with generalized coefficients. In particular, we consider a  class of coefficients $b$ which are elements of the space $ L^\infty([0,T], H^{-\beta}_{ q})$ for some $\beta\in (0,1/2)$, where $H^{-\beta}_{ q} $  is a fractional Sobolev space of negative derivation order, hence its elements are distributions (see Section~\ref{sc: preliminaries} for its definition). We consider two different systems of {\em FBSDEs with distributional coefficients}, both decoupled so that the forward equation can be solved first and the solution plugged into the backward equation.

In the first system, the distribution $b$ appears in the driver of the  backward equation as follows
 \begin{equation}\label{eq: forward-backward SDE transformed}
\left\{
\begin{array}{l}	
	 {X}_s^{t,x} = \ x  +\int_t^s \mathrm dW_r,\\
 {Y}_s^{t,x} = \  \Phi({X}_T^{t,x})  - \int_s^T{Z}_r^{t,x} \mathrm d{W}_r + \int_s^T f(r,{X}_r^{t,x},{Y}_r^{t,x}, Z_r^{t,x}) \mathrm dr\\ 	
	\qquad +  \int^T_s {Z}_r^{t,x}b(r,{X}_r^{t,x})\mathrm{d} r,\\
{\forall }  s\in[t,T],
\end{array}\right.
\end{equation}
where $W$ is a  $d$-dimensional Brownian motion, $\Phi$ and $f$ are functions with standard regularity properties which will be specified later, {and the processes $X, Y, Z$ are $d$, $m$ and $m\times d$-dimensional, respectively}.

In the second system,  the distribution appears in the forward equation  as follows
 \begin{equation} \label{eq: forward-backward SDE}
	\left\{
\begin{array}{l}	
 X_s^{t,x} =\  x +  \int_t^s b(r,X_r^{t,x}) \mathrm dr  +\int_t^s \mathrm dW_r,\\ 
	 Y_s^{t,x} = \Phi( X_T^{t,x})  - \int_s^T Z_r^{t,x} \mathrm dW_r + \int_s^T f(r,X_r^{t,x}, Y_r^{t,x}, Z_r^{t,x}) \mathrm dr,\\
	 \forall   s\in[t,T].
	 \end{array}\right.
\end{equation}

The two systems are studied independently. 
  We give a meaning to the integral terms $\int^T_s  {Z}_r^{t,x}b(r,{X}_r^{t,x})\mathrm{d} r $ and $  \int_t^s b(r,X_r^{t,x}) \mathrm dr $ by introducing a suitable notion of solution for the systems \eqref{eq: forward-backward SDE transformed} and \eqref{eq: forward-backward SDE}, and then  investigate their existence and uniqueness. Moreover we look at   the associated PDE and show its link with the FBSDEs (the well known {\em non-linear Feynman-Kac formula}). As one might expect, it turns out that the PDE associated to both systems \eqref{eq: forward-backward SDE transformed}  and  \eqref{eq: forward-backward SDE} is the same, and it is a semi-linear equation of the form
\begin{equation}\label{eq: PDE backward intro}
\left\{\begin{array}{l}
 u_t(t,x) +  L^b u(t,x) + f(t,x,u(t,x), \nabla u(t,x))=0, \\
 u(T,x)=\Phi(x),\\
 \forall  (t,x)\in[0,T]\times \mathbb R^d ,
\end{array}
\right.
\end{equation}
where the operator $L^b u := \frac 12 \Delta u +  \nabla u b$ is defined component by component (see Section \ref{sc: semi-linear PDE}).  This PDE also involves   distributional coefficients, in particular the drift $b$ which is multiplied by $\nabla u$. A thorough investigation of the partial differential equation is carried out.

\vspace{7pt}
\emph{Literature review.} The history of FBSDEs dates back to 1990, when the foundational paper of Pardoux and Peng \cite{PardouxPeng90} appeared.  In 1992 the same authors   established the link between  (decoupled) FBSDEs and quasi-linear PDEs, well-known as the non-linear Feynman-Kac formula \cite{PardouxPeng92}. A year later, Antonelli \cite{antonelli93} studied for the first time fully coupled FBSDEs  in a small time interval. Since then, the theory of BSDEs and of FBSDEs received a lot of attention by the mathematical community and found many  applications in different fields, especially in finance. For  more details on the latter we refer to the paper of El Karoui et al.\ \cite{ElKaroui-et.al.97} and references therein.

The above-mentioned literature and many subsequent papers were concerned with strong solutions, but starting from the early 2000s mathematicians introduced and studied the notion of weak solution  for FBSDEs. Weak solutions  are  analogous to weak solutions for SDEs, and their importance is illustrated by a series of   stochastic differential equations  which admit a  weak solution but for which  no strong solution exists. For example we mention the well-known Tsirel'son's stochastic differential equation introduced in 1975 by Tsirel'son \cite{tsirelson75},  or   the so-called sticky Brownian motion, which was recently studied by Engelbert and Peskir \cite{engelbert-peskir14}.
Antonelli and Ma \cite{AntonelliMa03} first proposed the notion of weak solutions for FBSDEs in 2003. A more  general notion of weak solution was studied later by Buckdahn et al.\ \cite{Buckdahn_et_al04} in 2004, where the equation for the forward component was implicitly given, and its existence without the uniqueness was discussed. Lejay \cite{lejay04} in 2004 studied existence of weak solutions by using the link between FBSDEs and weak and mild solutions of PDEs. Delarue and Guatteri \cite{delarue-guatteri06} in 2006 were the first to establish uniqueness of weak solutions for fully coupled Markovian FBSDEs. In their paper, the coefficients for the backward equation  are Lipschitz, hence  the ``weak'' notion essentially only intervenes in the forward equation. In 2008  Ma et al.\  \cite{ma_et.al08} also studied existence and uniqueness of weak solutions but in a more general framework, and in fact there the ``weak'' character appears both in the forward and in the backward equation.

The literature on FBSDEs is large but to our knowledge there is very little about (forward-)backward equations with generalized functions (Schwartz distributions). In 1997-1998, Erraoui, Ouknine and Sbi \cite{erraoui-ouknine-sbi97, erraoui-ouknine-sbi98} studied (reflected) BSDEs with distribution as terminal condition. By applying the stochastic flow method, Bally and Matoussi \cite{bally_matoussi01} in 2001 studied stochastic PDE with terminal values and coefficients being distributions  using Backward Doubly SDEs. In 2007, Hu and Tessitore \cite{hu_tessitore07} studied mild solutions of elliptic PDEs in Hilbert spaces by proving the regularity properties of a bounded solution of a BSDE with infinite horizon. Recently, Russo and Wurzer  \cite{russo_wurzer15} studied a one-dimensional BSDE indirectly involving distributional coefficients: They consider and solve a   semilinear ODE with  a distributional drift  and study the associated one-dimensional martingale problem. The martingales are then used to construct the solution of  a martingale-driven BSDE with random terminal time. We also cite the recent results of Diehl and Zhang \cite{diehl-zhang17} where the authors deal with BSDEs with Young integrals.

\vspace{7pt}
\emph{Motivation.} 
The importance of classical results on FBSDEs and their link to PDEs through the generalized Feynman-Kac formula is well known. In our case, we relax notably the assumptions on the coefficients of Markovian systems of FBSDEs to allow for generalized functions, and investigate  what kind of solutions one can expect in that case.  Once a generalised Feynman-Kac formula is obtained in the irregular/distributional case, then new tools and methods can be used to investigate irregular physical  phenomena described by (S)PDEs with distributional coefficients. In particular, PDEs like  \eqref{eq: PDE backward intro} with irregular fields $b$ have been considered as models of transport of passive scalars in turbulent fluids (like the  Kraichnan model \cite{kraichnan68}). In recent years the Kraichnan model has been researched by physicists also when the velocity field is a stochastic process, see e.g.\ \cite{pagani15} or \cite{gawedzki08} and references therein. An example of $b$ that we can treat in this paper is the formal gradient of the realization of some random field (like fractional Brownian noise cut at infinity, but one could consider also other fields not necessarily Gaussian so long as their realizations are $\alpha$-H\"older continuous with $\alpha>1/2$).

 In this paper we are indeed able to derive a Feynman-Kac formula that links the PDE \eqref{eq: PDE backward intro} with the forward-backward equations \eqref{eq: forward-backward SDE transformed}  and \eqref{eq: forward-backward SDE}, but our starting point is the solution of the PDE. Hence we use our knowledge on the PDE to infer results on the FBSDE. This is only partially satisfactory   if one argues that using  FBSDEs to solve PDEs is more interesting than the vice versa, but nevertheless the  link provides new stochastic tools to represent and study such turbulent PDEs. For example numerical methods to solve FBSDEs could be employed to find the numerical solution of the PDEs using the Feynman-Kac formula illustrated in this paper.
Indeed there is a line of research that exploits this connection and uses numerical solutions of BSDEs to infer solutions of PDEs  (for a recent work on this see e.g.\  \cite{kharroubi-etal.2015}).

\vspace{7pt}
\emph{Novelty and main results.} 
The present paper is the first to deal with FBSDEs like \eqref{eq: forward-backward SDE transformed} with distributional coefficients  appearing in the driver,  both in the one-dimensional and in the multidimensional case. Because of the lack of literature on this topic, the first challenge we face is to define a suitable  notion of  solution for the backward component of the FBSDE (see Definition \ref{def: virtual solution BSDE} of  virtual-strong solution). Once this is done, the next challenge is to investigate existence and uniqueness of the solution. To do so, we introduce a transformation --which in some sense can be regarded as the analogous for BSDEs of a Zvonkin transformation for SDEs-- and rewrite the original BSDE as an auxiliary backward SDE which can be treated with classical methods, see equation \eqref{eq: BSDE virtual equivalent}. For the auxiliary BSDE it is then possible to show  \emph{existence and uniqueness of a strong solution}, which leads to the same result for the original BSDE \eqref{eq: forward-backward SDE transformed} by transforming back the equation, see Theorem \ref{theor: exist uniq of backward virtual solution}. 
It is worth stressing the fact  that the solution we find is a strong type of solution (and not weak, i.e.\ not of martingale type like in  \cite{russo_wurzer15}). This is possible in the first place because the forward equation here is a Brownian motion and not a solution of a martingale problem.

The second main result in this paper is a \emph{non-linear Feynman-Kac representation formula} that links the PDE \eqref{eq: PDE backward intro} and the FBSDE \eqref{eq: forward-backward SDE transformed}  (see Theorem \ref{theor: Feynman-Kac construct virtual solution} and Theorem \ref{theor: Feynman-Kac construct mild solution}). To show this, we consider smooth approximations of $b$ and  related solutions to the FBSDE and the PDE, and then take the limit.  This requires  various uniform bounds on the smoothed solutions of the  PDE \eqref{eq: PDE backward intro} and of auxiliary PDE \eqref{eq: PDE aux} (see Sections \ref{sc: semi-linear PDE} and \ref{sc: auxiliary PDE and BSDE}).  Indeed the study of PDE \eqref{eq: PDE backward intro} is crucial in this paper because its solution  is used to define \emph{virtual solutions} for both FBSDEs systems \eqref{eq: forward-backward SDE transformed} and \eqref{eq: forward-backward SDE}, as illustrated in Definition \ref{def: virtual solution BSDE} and Definition \ref{def: virtual-weak solution FBSDE}. 
 We solve the semi-linear PDE \eqref{eq: PDE backward intro} by looking for mild solutions using a fixed-point argument. This is the same idea applied in \cite{flandoli_et.al14, issoglio13} where linear PDEs of transport-diffusion type with  distributional coefficients analogous to $b$ have  been studied. The novelty here is the non  linear term $f$, and for this we require Lipschitz continuity properties.  
 Moreover there is a  delicate issue about $f$ that we want to mention at this point, namely the need to match the two set-ups in which the PDE and the FBSDE naturally live, which clearly reflects on the assumptions on the coefficients.
 The former (PDE) is solved as an infinite-dimensional equation, in particular the solution as a function of time takes values in a Sobolev space and so the Lipschitz continuity required for the non-linearity $f$ must be set up in terms of Sobolev spaces (see Assumption \ref{ass: f in sobolev space}). The latter (FBSDE) is set-up in $\mathbb R^d$ and thus assumptions on the coefficients (including $f$) cannot be made in the Sobolev space, but are written in $\mathbb R^d$ instead (see Assumption \ref{ass: f in Rd}). Thus some care is needed to match the two settings and this is explained in Remark \ref{rm: link between Assumptions}. 

The final main result is about the FBSDE  \eqref{eq: forward-backward SDE}. This system is, in some sense, the generalization to multi dimensions of the BSDE studied in \cite{russo_wurzer15}, but with deterministic terminal time.  The system is decoupled and the forward equation is solved first. Here we study the forward equation with different techniques than in \cite{russo_wurzer15}, in particular we invoke the results  found in \cite{flandoli_et.al14} about  SDEs with distributional coefficients which can be applied to the forward component of \eqref{eq: forward-backward SDE}.  
The forward solution $ X_s^{t,x}$ is then used together with standard arguments to find a virtual-weak solution $(X^{t,x},  Y^{t,x}, Z^{t,x} )$ to \eqref{eq: forward-backward SDE},  see Theorem \ref{thm: exist! virtual-weak sol}.
Finally in  Theorem \ref{thm: Feynman-Kac formula} we give a stochastic representation  $(X^{t,x},  Y^{t,x}, Z^{t,x} )= (X^{t,x},   u(\cdot, X^{t,x}), \nabla u(\cdot,  X^{t,x} ))$ of the solution to the FBSDE \eqref{eq: forward-backward SDE} using the  mild solution $u$ of the PDE \eqref{eq: PDE backward intro}.

For system \eqref{eq: forward-backward SDE} we do not find strong solutions  but only weak solutions, because  the solution of the forward equation is of weak type.
We refer the reader to Section \ref{ssc: heuristc comments} for some extended and heuristic comments on the link between \eqref{eq: forward-backward SDE transformed}  and \eqref{eq: forward-backward SDE}, and for open questions.

\vspace{7pt}
\emph{Organization of the paper.}
The paper is organised as follows: In Section \ref{sc: preliminaries} we introduce the notation and recall some useful results; In Section  \ref{sc: semi-linear PDE} we study the PDE \eqref{eq: PDE backward intro} and find a unique mild solution with related smoothness properties. In Section \ref{sc: forward-backward SDE transformed} we introduce the notion of virtual-strong solution for backward SDE \eqref{eq: forward-backward SDE transformed} and show that a unique virtual-strong solution exists. Moreover we establish the non-linear Feynman-Kac formula for \eqref{eq: PDE backward intro} and \eqref{eq: forward-backward SDE transformed}. Finally in Section \ref{sc: forward-backward SDE} we recall the notion of virtual solution for the forward SDE in \eqref{eq: forward-backward SDE}, we show existence and uniqueness of a virtual-weak solution to \eqref{eq: forward-backward SDE} and we provide its explicit representation by means of a non-linear Feynman-Kac formula.

Throughout the paper the constants $C$ and $c$ can vary from line to line.

\section{Preliminaries}\label{sc: preliminaries}
Here we recall some known facts, for more details see \cite[Section 2.1]{flandoli_et.al14} and references therein.  

Let $(P(t), t\geq0)$ be the heat semigroup on the space of $\mathbb R^d$-valued Schwartz functions  $\mathcal S(\mathbb R^d)$   generated by $\frac12 \Delta$, that is the semigroup with kernel $p_t(x)= \frac{1}{(2\pi t)^{d/2}} \exp \left( -\frac{|x|^2}{2t}\right) $, where $|\cdot|$ denotes the Euclidean norm in  $\mathbb R^d$. The semigroup extends to the space of Schwartz distributions {$\mathcal S'(\mathbb R^d)$} by duality, and in particular it maps any {$L^p(\mathbb R^d)$} into itself for $1<p<\infty$. This restriction to {$L^p(\mathbb R^d)$}, denoted  by $(P_p(t), t\geq 0)$, is a bounded analytic semigroup (see \cite[Theorems 1.4.1, 1.4.2]{davies89}). Let  $A_p:= I-\frac12 \Delta$, then $-A_p$ also generates a bounded analytic semigroup which is given by $e^{-t} P_p(t)$ (i.e.\ with kernel $e^{-t}p_t(x)$). We can define fractional Sobolev spaces as images of fractional powers of $A_p$ (which are well defined for any power $s\in \mathbb R$, see \cite{pazy83}) by {$H^s_p(\mathbb R^d):= A_p^{-s/2}(L^p(\mathbb R^d))$}. These are Banach spaces endowed with the norm $\|u\|_{H^s_p}:=\|A_p^{s/2}u\|_{L^p}$. 
It turns out that these spaces correspond to the domain of fractional powers of $A_p$ and of $-\frac12 \Delta$, that is $D(A_p^{s/2}) =D((-\frac12 \Delta)^{s/2}) = H^s_p(\mathbb R^d) $. Moreover $A^{-\alpha/2}_p$ is an isomorphism between $H^s_p(\mathbb R^d)$ and $H_p^{s+\alpha}(\mathbb R^d)$, for each $\alpha\in \mathbb R$.  $H^s_p(\mathbb R^d; \mathbb R^n)$ are defined as above for each component. For shortness of notation we will sometimes denote them simply by  $H^s_p$ (note that the dimension $n$ could be $d, m$ or $m\times d$ depending on the context).   When we write $u\in H^s_p $ we mean that each component $u_i$ is in  $H^s_p(\mathbb R^d)$. The norm will be denoted with the same notation for simplicity. One can also show that $\nabla: H^{1+\delta}_p\to H^\delta_p$ is a continuous map, so  if $u\in H^{1+\delta}_p$ then $\|\nabla u\|_{H^{\delta}_p}\leq c \|u\|_{H^{1+\delta}_p}$ for some positive constant $c$.

The semigroup $(P_p(t), t\geq 0 )$ is a contraction on the $H^s_p(\mathbb R^d)$-spaces for all $t>0$ and all $s\in \mathbb R$ and moreover it enjoys the following mapping property: for $\delta>\beta\geq 0, \delta+\beta<1$ and $0<t\leq T$ it holds  $P_p(t): H^{-\beta}_p(\mathbb R^d)\to H^{1+\delta}_p(\mathbb R^d)$, in particular we have
\begin{equation}\label{eq: mapping of Pt}
\|P_p(t)w\|_{H_p^{1+\delta}(\mathbb R^d)}\le C  t^{-\frac{1+\delta+\beta}{2}}\|w\|_{H_p^{-\beta}(\mathbb R^d)}
\end{equation}
for $w\in H^{-\beta}_p(\R^d), t>0$, where $C= c e^T$ for some positive constant $c$. This follows from a similar property for the semigroup   $(e^{-t}P_p(t), t\geq 0)$  which is stated in \cite[Lemma 10]{flandoli_et.al14}, see also  \cite[Proposition 3.2]{issoglio13} for the analogous on domains $D\subset \mathbb R^d$.
 
  Here we recall the definition of the {\em pointwise product} between a function and a distribution (see \cite{runst_sickel96}) as we will use it several times in this paper.
Let $g\in \mathcal{S}^\prime(\R^d)$. We choose a function $\psi\in \mathcal S (\R^d) $ such that  $0\le \psi(x)\le 1 $, for every $x\in \R^d$  and 
 \begin{equation*}
 \psi(x)=\left\{
 \begin{array}{ll}	
1, &\quad|x|<1,\\
0, &\quad|x|\ge \frac32.
 \end{array}\right.
 \end{equation*}
 For every $j\in \mathbb{N}$, we consider the approximation $S^j g$ of $g$ as follows:
 \begin{equation*}
 S^jg (x):=\mathcal{F}^{-1} \lt(\psi\lt(\frac{\xi}{2^j}\rt)\mathcal{F}(g)\rt)(x),
 \end{equation*}
 where $\mathcal{F}(g)$ and $\mathcal{F}^{-1}(g)$ are the Fourier transform  and the inverse Fourier transform  of $g$, respectively. 
 The  product $gh$ of $g, h\in \mathcal{S}^\prime(\R^d)$ is defined as 
 \begin{equation} \label{eq: pointwise product}
 gh:=\lim_{j\to\infty}S^j g S^j h,
 \end{equation} 
 if the limit exists in  $\mathcal{S}^\prime(\R^d)$. The convergence of the limit \eqref{eq: pointwise product} in the case we are interested in is given by the following result (for a proof see \cite[Theorem 4.4.3/1]{runst_sickel96}).
 \begin{lemma}\label{lm: pointwise product}
 Let  $g\in H_q^{-\beta}(\mathbb{R}^d)$, $h\in  H_p^{\delta}(\mathbb R^d)$ for $1<p,q<\infty$, $q>\max(p,\frac{d}{\delta})$, $0<\beta<\frac{1}{2}$ and $\beta<\delta$. Then the pointwise product $gh$ is well defined, it belongs to the space $ H_p^{-\beta}(\mathbb R^d)$ and we have the following bound
\begin{equation*} 
 \|gh   \|_{  H_p^{-\beta}(\mathbb R^d)} \leq c\| g\|_{ H_q^{-\beta}(\mathbb{R}^d)}   \cdot  \|  h \|_{ H_p^{\delta}(\mathbb R^d)}.
\end{equation*}
\end{lemma}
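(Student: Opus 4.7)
The natural strategy is Littlewood--Paley theory together with Bony's paraproduct decomposition, which is precisely the machinery underlying Theorem 4.4.3/1 in Runst--Sickel. First, I would fix a smooth dyadic partition of unity $(\chi_k)_{k\geq -1}$ on the Fourier side and introduce the Littlewood--Paley blocks $\Delta_k := \mathcal{F}^{-1}(\chi_k \mathcal{F}\,\cdot\,)$, so that the cut-off $S^j$ appearing in the definition of the pointwise product agrees (after a harmless modification of $\psi$) with the partial sum $\sum_{k\leq j}\Delta_k$. A direct computation then rewrites $S^j g\cdot S^j h$ as a finite sum that, in the limit $j\to\infty$, formally splits into Bony's three pieces
\[
gh \;=\; \pi_{<}(g,h) + \pi_{>}(g,h) + \pi_{=}(g,h),
\]
with $\pi_{<}(g,h)=\sum_k S^{k-2}g\,\Delta_k h$, the symmetric $\pi_{>}(g,h)=\sum_k \Delta_k g\, S^{k-2}h$, and the resonant $\pi_{=}(g,h)=\sum_{|i-k|\leq 1}\Delta_i g\,\Delta_k h$. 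The task is then to show that each of these three series converges in $H^{-\beta}_p(\mathbb{R}^d)$ with norm bounded by $\|g\|_{H^{-\beta}_q}\|h\|_{H^\delta_p}$.

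Each piece is handled by Bernstein's inequalities, the Littlewood--Paley square-function characterisation of $H^s_r$ (valid for $1<r<\infty$), and H\"older's inequality. For $\pi_{<}(g,h)$, frequency localisation of $\Delta_k h$ at scale $2^k$ gives a bound of the form $\|\Delta_k h\|_{L^p}\leq C\,2^{-k\delta}c_k\,\|h\|_{H^\delta_p}$ with $\{c_k\}\in\ell^2$, while the low-frequency factor $S^{k-2}g$ is controlled in a suitable $L^q$-type norm and then converted to an $L^p$-compatible bound using $q>p$; the pairing produces a summable geometric factor of order $2^{-k(\delta-\beta)}$, which is exactly what the assumption $\beta<\delta$ supplies. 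The symmetric piece $\pi_{>}(g,h)$ is treated in the same way, this time via a Bernstein bound of the form $\|\Delta_k g\|_{L^q}\leq C\,2^{k\beta}c_k\,\|g\|_{H^{-\beta}_q}$; the condition $q>d/\delta$ enters here through a Sobolev embedding that lets the partial sum $S^{k-2}h$ act as an essentially bounded multiplier despite $h$ only lying in $H^\delta_p$ rather than $H^\delta_q$.

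The main obstacle, as is standard for this type of product estimate, is the resonant term $\pi_{=}(g,h)$, because the two factors carry comparable frequencies and the clean low--high separation that rescues the off-diagonal pieces is no longer available. The argument must therefore rely entirely on the regularity surplus $\delta-\beta>0$, combined with a Cauchy--Schwarz over the dyadic index $k$ and a H\"older in space; this is also the step that fixes the output integrability at $p$ rather than $q$, and where the joint use of $q>p$ and $q>d/\delta$ becomes essential in closing the chain of inequalities. Once the three series are controlled with the stated bounds, it remains only to verify that the truncated products $S^j g\cdot S^j h$ actually converge to $\pi_{<}(g,h)+\pi_{>}(g,h)+\pi_{=}(g,h)$ in $\mathcal{S}'(\mathbb{R}^d)$, which is automatic because the partial sums of the paraproduct decomposition telescope into $S^j g\cdot S^j h$ up to a remainder that vanishes in $H^{-\beta}_p$ by the very same estimates, giving both the distributional convergence of the product and the claimed inequality simultaneously.
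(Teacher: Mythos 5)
The paper itself does not prove this lemma: it cites \cite[Theorem 4.4.3/1]{runst_sickel96} and moves on, so there is no internal proof to compare against. Your sketch is a correct reconstruction of the standard Littlewood--Paley / Bony paraproduct argument that underlies the cited Runst--Sickel theorem, and the outline is sound: the off-diagonal pieces $\pi_<$ and $\pi_>$ gain from $\delta-\beta>0$, while the resonant piece forces both $q>p$ (to split the H\"older pairing) and $q>d/\delta$ (via a Bernstein/Sobolev correction to return from exponent $(1/p-1/q)^{-1}$ to $p$ without losing more than $\beta$ derivatives). One small imprecision: you attribute $q>d/\delta$ mainly to $\pi_>$, but it is in fact needed in both $\pi_<$ and $\pi_=$ to absorb the $2^{kd/q}$ Bernstein loss, and strict inequality is what gives the $\ell^2$ (or Besov-to-Sobolev) summability in the resonant sum; this is worth making explicit if the sketch were expanded, but it does not undermine the plan.
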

 
For the following, see \cite[Section 2.7.1]{triebel78}.  The closures of  $\mathcal S$ with respect to the norms  
$$
\|h\|_{C^{0,0}} := \|h\|_{L^\infty}
$$ 
and 
$$
\|h\|_{C^{1,0}} := \|h\|_{L^\infty} + \|\nabla h\|_{L^\infty} 
$$ 
respectively, are denoted by {$C^{0,0}(\mathbb R^d; \mathbb R^m)$} and {$C^{1,0}(\mathbb R^d; \mathbb R^m)$}. 
For any $\alpha >0$, we  consider the Banach spaces
\begin{align*}
	&C^{0, \alpha}= \{ h \in {C^{0,0}(\mathbb R^d; \mathbb R^m)} :  \|h\|_{C^{0,\alpha}} <\infty \}\\
	&C^{1, \alpha} =\{  h \in {C^{1,0}(\mathbb R^d; \mathbb R^m)} :  \|h\|_{C^{1,\alpha}} <\infty \},
\end{align*}
endowed with the norms
\begin{align*}
	&\|h\|_{C^{0,\alpha}} := \| h \|_{L^\infty}  + \sup_{x\neq y \in \mathbb R^d}  \frac{|h(x)-h(y)|}{|x-y|^\alpha} \\
	& \|h\|_{C^{1,\alpha}} := \| h \|_{L^\infty}  + \| \nabla h \|_{L^\infty} + \sup_{x\neq y \in \mathbb R^d}  \frac{|\nabla h(x)-\nabla h(y)|}{|x-y|^\alpha},
\end{align*}
respectively.

Let $B$ be a Banach space. We denote by $C^{0, \alpha}([0,T]; B)$ the space  analogous to $C^{0, \alpha}$ but with values in $ B$, and its norm by $\| \cdot \|_{ C^{0, \alpha}([0,T]; B)}$. We denote by $C([0,T];B)$ the Banach space of $B$-valued continuous functions and its sup norm by $\|\cdot\|_{\infty,B}$. For $h\in C([0,T],B)$, we also use the family of equivalent norms $\{\|\cdot\|_{\infty,B}^{(\rho)}, \rho\ge 1\}$, defined by
\[
\|h\|_{\infty,B}^{(\rho)}:=\sup_{0\le t\le T} e^{-\rho t} \|h(t)\|_{B}. 
\]  
The usual esssup norm on $L^\infty (0,T; B)$ will also be denoted  by $\|\cdot\|_{\infty, B}$ with a slight abuse of notation. The Euclidean norm in $\mathbb R$, $\R^d$, $\R^m$, and the Frobenius norm in $\R^{m\times d}$ will be denoted by $|\cdot|$.

The following lemma provides a generalization of the Morrey inequality to fractional Sobolev spaces. For the proof we refer to \cite[Theorem 2.8.1, Remark 2]{triebel78}.

\begin{lemma}[Fractional Morrey inequality]\label{lm: fractional Morrey ineq}
	Let $0<\delta< 1 $ and $d/\delta<p<\infty$. If $h\in H^{1+\delta}_p(\mathbb R^d)$ then there exists a unique version of $h$ (which we denote again by  $h$) such that $h$ is differentiable. Moreover $ h\in C^{1,\alpha}(\mathbb R^d)$ with $\alpha= \delta-d/p$ and
	\begin{equation}\label{eq: fractional Morrey ineq}
	\|h\|_{C^{1,\alpha}}\leq c \|h\|_{H^{1+\delta}_p}, \quad  \|\nabla h\|_{C^{0,\alpha}}\leq c \|\nabla h\|_{H^{\delta}_p},
	\end{equation}
	where $c=c(\delta, p, d)$ is a universal constant.
\end{lemma}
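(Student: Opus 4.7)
The plan is to reduce the claim to the single fractional Sobolev embedding
\[
H^{\sigma}_p(\mathbb R^d) \hookrightarrow C^{0,\alpha}(\mathbb R^d), \qquad \alpha = \sigma - d/p \in (0,1),
\]
valid for $\sigma \in (d/p, 1 + d/p)$, and then apply it twice: once with $\sigma = 1+\delta$ to control $h$ in $C^{0,\alpha}$ (which in particular yields $\|h\|_{L^\infty} \leq c\|h\|_{H^{1+\delta}_p}$), and once with $\sigma = \delta$ applied componentwise to $\nabla h$. Since the partial derivatives are bounded operators $\partial_i : H^{1+\delta}_p \to H^{\delta}_p$ (this is transparent on the Bessel-potential side because $\partial_i$ commutes with $A_p^{s/2}$ up to lower order), we also have $\|\nabla h\|_{H^\delta_p} \leq c \|h\|_{H^{1+\delta}_p}$, and the two inequalities of \eqref{eq: fractional Morrey ineq} then follow at once. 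The existence of a differentiable version is automatic once we know both $h$ and $\nabla h$ (understood a priori as distributions) admit continuous representatives.

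For the scalar embedding, I would represent $h = G_\sigma * f$, where $f = A_p^{\sigma/2} h \in L^p(\mathbb R^d)$ with $\|f\|_{L^p} = \|h\|_{H^\sigma_p}$ and $G_\sigma$ is the Bessel kernel of order $\sigma$. The pointwise bound follows from Hölder's inequality together with the integrability $G_\sigma \in L^{p'}(\mathbb R^d)$ whenever $\sigma p > d$, which is exactly the hypothesis $\sigma > d/p$. For the Hölder estimate, write
\[
h(x) - h(y) = \int_{\mathbb R^d} \bigl( G_\sigma(x-z) - G_\sigma(y-z)\bigr) f(z)\, \d z,
\]
apply Hölder's inequality in $z$, and use the kernel estimate
\[
\|G_\sigma(\cdot + u) - G_\sigma(\cdot)\|_{L^{p'}(\mathbb R^d)} \leq c\, |u|^{\alpha}, \qquad \alpha = \sigma - d/p,
\]
which is extracted from the known asymptotic behaviour of the Bessel kernel (singularity of order $|z|^{-(d-\sigma)}$ at the origin and exponential decay at infinity). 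Splitting the integral over $|z|\leq 2|u|$ and $|z|>2|u|$ and using the mean value theorem on the second region gives the required translation bound.

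The only delicate point in this program is this Hölder-type estimate on translates of the Bessel kernel; the rest is routine. A technically cleaner alternative is to bypass the Bessel kernel entirely and use Littlewood--Paley decomposition $h = \sum_j \Delta_j h$ together with Bernstein's inequalities, exploiting the characterisation $C^{0,\alpha} = B^\alpha_{\infty,\infty}$ and the continuous chain of inclusions $H^\sigma_p = F^\sigma_{p,2} \hookrightarrow B^\sigma_{p,\infty} \hookrightarrow B^{\sigma - d/p}_{\infty,\infty}$. This is essentially the path followed in the cited reference, so in practice I would simply invoke \cite[Theorem 2.8.1]{triebel78} after performing the reduction described in the first paragraph.
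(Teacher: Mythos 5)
The paper does not in fact supply its own argument for this lemma; it simply defers to Triebel, \cite[Theorem 2.8.1, Remark 2]{triebel78}. Your proposal is therefore more, not less, than what the paper does, and the overall reduction and kernel argument are sound. Two remarks. First, a small imprecision in the reduction step: you state the scalar embedding $H^\sigma_p \hookrightarrow C^{0,\alpha}$, $\alpha = \sigma - d/p$, as valid for $\sigma \in (d/p, 1+d/p)$, and then propose to apply it with $\sigma = 1+\delta$. But under the hypotheses $d/p < \delta < 1$ we have $1+\delta > 1 + d/p$, so $\sigma = 1+\delta$ falls \emph{outside} the range you announce. What you actually need from that instance is only the $L^\infty$ bound $\|h\|_{L^\infty} \le c\|h\|_{H^{1+\delta}_p}$, and this follows either from $H^\sigma_p \hookrightarrow L^\infty$ for any $\sigma > d/p$, or more economically by routing through the chain $H^{1+\delta}_p \hookrightarrow H^\delta_p \hookrightarrow C^{0,\alpha} \hookrightarrow L^\infty$, so the program survives; you just shouldn't describe it as an application of the $C^{0,\alpha}$ embedding at the exponent $1+\delta$. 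Second, the Bessel-kernel computation is the genuinely non-cited content. It is correct: in the region $|z|\le 2|u|$ one integrates the $|z|^{\sigma-d}$ singularity directly and picks up $|u|^{\sigma - d/p}$, and in the region $|z| > 2|u|$ the mean value bound $|\nabla G_\sigma(z)| \lesssim |z|^{\sigma-d-1}$ together with the integrability condition $(d+1-\sigma)p' > d$, which reduces precisely to $\sigma < 1 + d/p$, produces the same exponent. The net effect is that you have replaced the paper's black-box citation by a self-contained proof via the potential representation, at the cost of some asymptotic estimates on $G_\sigma$; the citation route through $F^\sigma_{p,2} \hookrightarrow B^{\sigma-d/p}_{\infty,\infty}$ avoids those estimates but requires the Littlewood--Paley machinery. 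Both are standard and both land on the same lemma.
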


\noindent  {\bf Standing Assumption:} Throughout the paper we will make the following standing assumption about the drift $b$ and in particular about the parameters involved. We acknowledge that the set $K(\beta, q)$ is taken from \cite{flandoli_et.al14}.\\ 
{\em Let $\beta\in\left(  0,\frac{1}{2}\right)  $, $q\in\left(  \frac{d}{1-\beta},\frac{d}{\beta}\right)  $. Let the drift $b$ be of the type
		\[
		b\in L^\infty\left ([0,T]; H^{-\beta}_{q}(\R^d; \R^d) \right).
		\]
Moreover for given $\beta$ and $q$ as above we define the  set
		\begin{equation}\label{eq: set admissible kappa}
		K(\beta,q):=\left\{\kappa=(\delta, p): \; \beta< \delta< 1-\beta, \,  \frac d\delta < p < q  \right\}.
		\end{equation}
We always choose $(\delta, p)\in K(\beta,q)$. 
Note that $K(\beta,q)$ is non-empty since $\beta<\frac12 $ and $ \frac d{1-\beta}< q < \frac d\beta $. }

Regarding the  functions $f$ and $\Phi$, we make the following parallel sets of assumptions. This is because the PDE   is set (and solved) using fractional Sobolev spaces, whereas the BSDE is typically set in $\mathbb R^d$. We discuss the link and implications of these two sets of Assumptions in Remark \ref{rm: link between Assumptions} below. Afterwards, we also give examples of possible $f$. Note that the notation for  $f$ is the same, even though the function is in principle different in the two sets of assumptions. 
\begin{assumption} \label{ass: f in Rd} \ 
\begin{itemize}
\item {$\Phi:\mathbb R^d\to \mathbb R^m$} is such that $\Phi\in H^{1+\delta +2\gamma}_p$ for some $\gamma<\frac{1-\delta-\beta}{2}$; 
\item   {$f:[0,T]\times\mathbb R^d \times \mathbb R^m\times \mathbb R^{m\times d} \to \mathbb R^m$} is continuous in $(x, y,z)$ uniformly in $t$, and Lipschitz continuous in $(y,z)$ uniformly in $t$ and $x$, i.e.\ $\vert f(t,x,y,z)-f(t,x, y', z')\vert\leq L(\vert y-y'\vert+ \vert z-z'\vert )$ for any   {$y,y' \in \mathbb R^m$ and $z, z'\in \mathbb R^{m\times d}$}; 
\item  $sup_{t,x} |f(t,x,0,0)| \leq  C$ and $\sup_{t\in[0,T]} \int_{\mathbb R^d}|f(t,x,0,0)|^p \mathrm dx \le C$. 
\end{itemize}
\end{assumption}  
  
\begin{assumption}\label{ass: f in sobolev space}\  
\begin{itemize}
\item  $\Phi\in H^{1+\delta +2\gamma}_p(\R^d; \R^m)$ for some $\gamma<\frac{1-\delta-\beta}{2}$;
\item $f:[0,T]\times H^{1+\delta}_p (\R^d;\R^m)  \times H^{\delta}_p (\R^d; \R^{m\times d}) \to H^{0}_p  (\R^d;\R^m)  $  is Lipschitz continuous in the second and third variable uniformly in $t$, that is, there exists a positive constant $L$ such that for any $u_1, u_2\in H^{1+\delta}_p  $ and $v_1, v_2 \in H^{ \delta}_p $ then $$\|f(t,u_1, v_1)- f(t, u_2, v_2)\|_{H^{0}_p}\leq L\left ( \|u_1-u_2\|_{H^{1+\delta}_p} +  \|v_1-v_2\|_{H^{ \delta}_p} \right) ;$$
\item $sup_{t,x} |f(t,x,0,0)| \leq C$ and  $\sup_{t\in[0,T]} \|f(t,0,0)\|_{H^{0}_p}\le C$, where $0$ here denotes the constant zero function.
\end{itemize}
\end{assumption}
\textbf{Notation:} In Assumption \ref{ass: f in sobolev space} the functional $f$ is a function of time $t$ and of two other functions, often denoted by $u$ and $v$ (or $u$ and $\nabla u$). In this paper we   write $f(t, u,v)$, or  $f(t, \cdot, u, v)$, or also $f(t, \cdot, u(\cdot), v(\cdot))$, and this is  an element of the space $H^{1+\delta}_p$ by Assumption \ref{ass: f in sobolev space}. 

\begin{remark}\label{rm: link between Assumptions}
\begin{itemize}
\item By applying the Fractional Morrey inequality we see that $\Phi\in C^{1,\alpha}$ with $\alpha = \delta+2\gamma -\frac dp>0$. This implies in particular that $\Phi$ is bounded and continuous. Note that the latter would be the standard assumption on the terminal condition $\Phi$ when solving the BSDE, but our setting to solve the PDE requires that $\Phi$ is an element of  fractional Sobolev spaces and  we will use the fact that Assumption \ref{ass: f in Rd} implies Assumption \ref{ass: f in sobolev space}, as illustrated below. 
\item Assumption   \ref{ass: f in Rd} implies Assumption \ref{ass: f in sobolev space}. Indeed take $f$ according to Assumption \ref{ass: f in Rd}. Then we can define the functional $\bar f$ as follows $\bar f (t, u, v) (\cdot):= f(t,\cdot, u(\cdot), v(\cdot) )$ for   $u\in H^{1+\delta}_p$ and $ v\in  H^{ \delta}_p  $.  
The first and third bullet points of  Assumption \ref{ass: f in sobolev space}  are obvious.  
 The second bullet point can be proven as follows.
First we show that for $(t,u,v)\in [0,T]\times H^{1+\delta}_p \times H^{\delta}_p $ then  $\bar f (t, u, v)\in H^0_p$. Indeed we have
\begin{align*}
\phantom{=}&\int_{\mathbb R^d} |\bar f(t,u,v)(x)|^p \mathrm dx 
= \int_{\mathbb R^d} | f(t, x,u(x),v(x))|^p \mathrm dx \\
\leq & c \int_{\mathbb R^d} | f(t, x,u(x),v(x))- f(t,x,0,0)|^p \mathrm dx \\
& + c \int_{\mathbb R^d} | f(t,x,0,0)|^p \mathrm dx \\
\leq & c L^p (\|u\|^p_{H^{1+\delta}_p}+ \|v\|^p_{H^{\delta}_p}) +\sup_{0\leq t\leq T} \|f(t, 0, 0)\|_{H^{0}_p} <\infty,
\end{align*}
where the constant $c$ depends on $p$.

Now with similar calculations one can prove that given any  $u, u'\in  H^{1+\delta}_p $ and $v, v'\in  H^{\delta}_p $ it holds
$$\|\bar f (t, u, v) - \bar f (t, u', v') \|_{H^0_p} \leq cL \left( \|u-u' \|_{H^0_p} +\|v-v' \|_{H^0_p}\right),$$
where the constant $c$ depends on $p$, and $L$ is the Lipschitz constant for $f$.  
\end{itemize}
\end{remark}

\textbf{Example}.
\begin{itemize}
\item An easy case  is the class of functions $f$  linear in $(y,z)$, for example $f(t,x,y,z)= c(t)\cdot(y+z)+d(x)$, where $t\mapsto c(t)$ is continuous on $[0,T]$ and $x\mapsto d(x)$ is bounded in $\mathbb R^d$ and $L^p(\mathbb R^d)$-integrable, for example $d(x) =  e^{-|x|^2}$.  In this case we would have $\bar f(t, u, v) = c(t)\cdot( u + v) + d $.
\item A non-linear example is given by $f(t,x,y,z)= c(t)\cdot \sin(y+z)+d(x)$, where $c$ and $d$ are as above. Then we would get $\bar f(t, u, v) = c(t)\cdot\sin ( u + v) +d$, which is Lipschitz continuous in $(u,v)$ and bounded at $0$ uniformly in $(t,x)$.
\end{itemize}

\section{The semi-linear PDE}\label{sc: semi-linear PDE}
In this section we analyse the PDE \eqref{eq: PDE backward intro} and obtain several bounds for its solution and for the mollified version. We refer the reader to \cite{hinz_issoglio_zaehle13, issoglio13} for results on different (S)PDEs obtained using similar techniques, and \cite{issoglio_zaehle15} for the general case of linear equations in metric measure spaces. 

\subsection{Existence and uniqueness of a mild solution}\label{ssc: existence and uniqueness mild sol}
We recall the PDE below for ease of reading:
\begin{equation}\label{eq: PDE backward Rd}
\left\{\begin{array}{ll}
u_t(t,x) +L^b u (t,x)  + f(t,x,u(t,x), \nabla u(t,x))=0, \\
u(T,x)=\Phi(x),\\
\forall (t, x)\in[0,T] \times \mathbb R^d.
\end{array}
\right.
\end{equation}
Here the operator $L^b u = \frac 12 \Delta u +\nabla u b$ is defined component by component by $ (L^b u)_i  (t,x) = \frac12 \Delta u_i(t,x) +  \nabla u_i(t,x)b(t,x)$ for all $i=1, \ldots, d$.
The peculiarity of this PDE is that it  involves a distributional coefficient $b $ and in particular its product with $\nabla u$. The meaning we give to this product makes use of the pointwise product recalled in Section \ref{sc: preliminaries}. We follow the study of a similar equation from the first author in \cite{issoglio13}. Here the novelty is that the PDE is non-linear, with the extra term $f$ appearing. We are going to look for mild solutions, hence the following definition is in order.

\begin{definition}
	A mild solution of \eqref{eq: PDE backward Rd} is an element $u$ of $C([0,T], H^{1+\delta}_p)$ which is a  solution of the following integral equation
	\begin{align} \nonumber
		u\left(  t\right)  =& P_p(T-t) \Phi	+ \int_{t}^{T}P_p (r-t)\left( 
		\nabla u\left(  r\right)  b\left(  r\right)\right) dr\\ \label{eq: mild solution PDE}
		&+\int_{t}^{T}P_p\left(  r-t\right) f \left(r, u(r) , \nabla u (r) \right)dr,
	\end{align}
	where $(P_p(t), t\geq0)$ is the semigroup generated by $\frac12 \Delta$ and recalled in Section \ref{sc: preliminaries}.
\end{definition}

To solve the PDE \eqref{eq: PDE backward Rd} we will use a fixed point argument in equation  \eqref{eq: mild solution PDE} and for that we need $f$ to be an element of a fractional Sobolev space as function of $x$  and further to be Lipschitz continuous in such space: this is what is stated in Assumption \ref{ass: f in sobolev space}.

\begin{theorem}\label{thm: PDE backward}
	Suppose that Assumption \ref{ass: f in sobolev space}  holds. Then there exists a unique mild solution $u\in C([0,T], H^{1+\delta}_p)$ of \eqref{eq: PDE backward Rd}. 
\end{theorem}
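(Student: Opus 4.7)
The plan is to solve the integral equation \eqref{eq: mild solution PDE} via Banach's fixed point theorem, by setting up a contraction on $C([0,T], H^{1+\delta}_p)$ equipped with a suitable equivalent norm. The overall structure follows the approach used for the linear analogue in \cite{issoglio13}, with the extra step of handling the nonlinear term $f$.

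Define the map $\mathcal{T}: C([0,T], H^{1+\delta}_p) \to C([0,T], H^{1+\delta}_p)$ by letting $\mathcal{T}(u)(t)$ equal the right-hand side of \eqref{eq: mild solution PDE}. The first task is to check $\mathcal{T}$ is well-defined, i.e., each of the three terms lies in $H^{1+\delta}_p$ for every $t$ and depends continuously on $t$. For $P_p(T-t)\Phi$, use that $\Phi \in H^{1+\delta+2\gamma}_p \hookrightarrow H^{1+\delta}_p$ and the strong continuity of the semigroup. For the drift term, apply Lemma \ref{lm: pointwise product} to give a meaning to the pointwise product and bound $\|b(r)\nabla u(r)\|_{H^{-\beta}_p} \leq c\,\|b\|_{\infty, H^{-\beta}_q}\|u(r)\|_{H^{1+\delta}_p}$, then use the smoothing estimate \eqref{eq: mapping of Pt} to conclude
\[
\|P_p(r-t)(b(r)\nabla u(r))\|_{H^{1+\delta}_p} \leq C e^{T}(r-t)^{-(1+\delta+\beta)/2}\|u(r)\|_{H^{1+\delta}_p},
\]
which is integrable in $r$ over $[t,T]$ because $\delta+\beta<1$ by the standing assumption. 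For the $f$ term, combine Assumption \ref{ass: f in sobolev space} to get $\|f(r,u(r),\nabla u(r))\|_{H^0_p}\leq L(\|u(r)\|_{H^{1+\delta}_p}+\|\nabla u(r)\|_{H^{\delta}_p})+C$ with the standard analytic-semigroup bound $\|P_p(r-t)w\|_{H^{1+\delta}_p}\leq C(r-t)^{-(1+\delta)/2}\|w\|_{H^0_p}$, which is also integrable since $\delta<1$. Continuity in $t$ follows by dominated convergence in each integral and the strong continuity of $P_p$ on $H^{1+\delta}_p$.

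For the contraction step, take $u_1,u_2\in C([0,T], H^{1+\delta}_p)$ and, using the Lipschitz hypothesis on $f$ together with the same two estimates, obtain
\[
\|\mathcal{T}(u_1)(t) - \mathcal{T}(u_2)(t)\|_{H^{1+\delta}_p} \leq C\int_t^T \bigl[(r-t)^{-(1+\delta+\beta)/2} + (r-t)^{-(1+\delta)/2}\bigr]\|u_1(r)-u_2(r)\|_{H^{1+\delta}_p}\,\d r.
\]
Introduce the equivalent family of norms $\|h\|^{(\rho)}:=\sup_{t\in[0,T]} e^{\rho t}\|h(t)\|_{H^{1+\delta}_p}$ (equivalent to $\|\cdot\|_{\infty, H^{1+\delta}_p}$ on $[0,T]$). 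Multiplying by $e^{\rho t}$ and inserting $e^{-\rho r}e^{\rho r}$ under the integral gives
\[
e^{\rho t}\|\mathcal{T}(u_1)(t)-\mathcal{T}(u_2)(t)\|_{H^{1+\delta}_p} \leq C\|u_1-u_2\|^{(\rho)}\int_0^{T-t} \bigl[s^{-(1+\delta+\beta)/2} + s^{-(1+\delta)/2}\bigr] e^{-\rho s}\,\d s,
\]
and the remaining integral is $O\bigl(\rho^{(\delta+\beta-1)/2}+\rho^{(\delta-1)/2}\bigr)\to 0$ as $\rho\to\infty$. Choosing $\rho$ large enough yields a contraction, and Banach's fixed point theorem produces the unique mild solution.

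The main technical point is the interplay between the singular kernel $(r-t)^{-(1+\delta+\beta)/2}$ arising from the semigroup smoothing of the distributional product $b\nabla u$ and the integrability requirement; everything hinges on the standing assumption $\beta<\delta<1-\beta$ and $d/\delta<p<q$, which guarantees both that Lemma \ref{lm: pointwise product} applies and that the exponent is less than $1$. The $f$-term is comparatively easier since the analytic semigroup gains two derivatives minus $\delta$, which is integrable for free.
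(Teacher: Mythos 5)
Your argument is correct and is essentially the paper's proof: both set up a Banach contraction on $C([0,T],H^{1+\delta}_p)$ with a $\rho$-weighted equivalent sup-norm, estimate the $b$-term via Lemma \ref{lm: pointwise product} and the smoothing bound \eqref{eq: mapping of Pt}, and estimate the $f$-term via its Lipschitz property together with the analytic-semigroup bound with $\beta=0$. The only difference is cosmetic: the paper first time-reverses the equation ($\bar u(t)=u(T-t)$) so as to use the weight $e^{-\rho t}$ defined in Section \ref{sc: preliminaries}, while you keep the backward form and correspondingly use $e^{+\rho t}$; the resulting singular-kernel integrals and the $\rho^{(\delta+\beta-1)/2}+\rho^{(\delta-1)/2}\to 0$ conclusion are identical.
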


\begin{proof}
	The idea of the proof is   similar to the proof of \cite[Theorem 3.5]{issoglio13} and \cite[Theorem 14]{flandoli_et.al14}: We look for a fixed point in $C([0,T], H^{1+\delta}_p)$, in particular we show that the mapping defined by the right-hand side of \eqref{eq: mild solution PDE} is a contraction by using the family of equivalent norms $\|\cdot\|_{\infty, H^{1+\delta}_p}^{(\rho)}$.
	
	To this aim, we rewrite the mild solution in a forward form for $\bar u(t)=  u(T-t)$. We get 
	\begin{align} \nonumber
	\bar u\left(  t\right) =  & P_p(t) \Phi \\\label{eq: mild solution PDE v}
	& + \int_{0}^{t}P_p (t-r) \left( 
	\nabla \bar u\left(  r\right)b\left(T- r\right)  + f \left(T-r, \bar u(r), \nabla \bar u (r)  \right)\right)dr\\\nonumber
	 = & P_p(t) \Phi+ \int_{0}^{t}P_p (t-r) \left(
	 \nabla \bar u\left(  r\right)\bar{b}\left(  r\right) + \bar{f} \left(r, \bar u(r), \nabla\bar u (r)  \right)\right)dr,
	\end{align}
    where $\bar{b}(r)=b(T-r)$ and $\bar{f}(r,\bar u (r))=f(T-r,\bar u(r), \nabla \bar u(r))$. Since $\bar{b}, \bar{f}, \bar u$ and $b, f, u$ share the same regularities in $r$, with a slight abuse of notations, in the following we still write $b, f$ and $u$ instead of $\bar{b}, \bar f$ and $\bar{u}$.  
	
	If we denote by $I_t(u)$ the right-hand side of \eqref{eq: mild solution PDE v}, then we need to control the norm $\|I(u_1)-I(u_2)\|^{(\rho)}_{\infty, H^{1+\delta}_p}$ for any $u_1, u_2\in C([0,T], H^{1+\delta}_p)$, which is the sum of three terms: One with the  initial condition, one term with $b$ and one term with $f$. The initial condition $P_p(t)\Phi$ belongs to $ H^{1+\delta}_p$ since $\Phi\in H^{1+\delta+2\gamma}_p \subset H^{1+\delta}_p$ and the semigroup is a contraction on $H^{1+\delta}_p$. The term including the distributional coefficient $b$ can be treated exactly like in \cite[Theorem 3.4]{issoglio13} because the pointwise product is linear. One  gets the bound 
	\begin{align*}
		&\left \| \int_0^\cdot  P_p(\cdot-r) \left(( \nabla u_1(r)-\nabla u_2(r))  b(r) \right)  \mathrm dr \right  \|^{(\rho)}_{\infty, H^{1+\delta}_p} \\
		\leq  &C  \rho^\frac{\delta+\beta-1}{2} \|b\|_{\infty ,H^{-\beta}_p } \|u_1-u_2\|^{(\rho)}_{\infty,H^{1+\delta}_p},
	\end{align*}
	which is finite and the constant $C  \rho^\frac{\delta+\beta-1}{2} $ tends to zero as $\rho\to \infty$ since $\delta+\beta-1<0$ by assumption on the parameters. 
	
	The third term involves $f$ and is estimated  using the Lipschitz regularity of $f$   and the mapping property \eqref{eq: mapping of Pt} of $P_p(t)$ with $\beta=0$. We get 
\begin{equation*} 
	\begin{aligned} 
		&\left \|\int_0^\cdot  P_p(\cdot-r)   f \left(r, u_1(r), \nabla u_1(r)  \right)  \mathrm dr  -\int_0^\cdot  P_p(\cdot-r)   f \left(r, u_2(r), \nabla u_2(r)   \right)  \mathrm dr \right  \|^{(\rho)}_{\infty,H^{1+\delta}_p} \\ 
		\leq & \sup_{0\leq t \leq T}e^{-\rho t} \int_0^t \| P(t-r) \big(  f \left(r, u_1(r), \nabla u_1(r)   \right) -   f \left(r, u_2(r), \nabla u_2(r)   \right)\big) \|_{H^{1+\delta}_p} \mathrm dr \\
		\leq & \sup_{0\leq t \leq T}e^{-\rho t} \int_0^t  C r^{-\frac{1+\delta}{2}} \|  f \left(r, u_1 (r) , \nabla u_1(r)  \right) -   f \left(r, u_2(r) , \nabla u_2(r)  \right) \|_{H^{0}_p} \mathrm dr \\
		\leq & C \sup_{0\leq t \leq T} \int_0^t e^{-\rho (t-r)}    e^{-\rho r} r^{-\frac{1+\delta}{2}} L\left ( \|  u_1(r) -u_2(r)   \|_{H^{1+\delta}_p} +\| \nabla  u_1(r) - \nabla u_2(r)   \|_{H^{\delta}_p}  \right) \mathrm dr\\
		\leq & 2 C  \|  u_1 -u_2   \|^{(\rho)}_{\infty, H^{1+\delta}_p}  \sup_{0\leq t \leq T} \int_0^t e^{-\rho (t-r)} r^{-\frac{1+\delta}{2}} \mathrm dr\\
		\leq & C  \rho^{\frac{\delta-1}{2}} \| u_1-u_2   \|^{(\rho)}_{\infty,H^{1+\delta}_p},
		\end{aligned}
	\end{equation*}
	where in the second to last inequality we used the definition of $\rho$-equivalent norm and the continuity of $\nabla:H^{1+\delta}_p\to H^\delta_p$.
	Note that again the exponent of $\rho$ is negative since $\delta <1$ by assumption.
	Thus for $\rho$ large enough we have $$ \|I(u_1)-I(u_2)\|^{(\rho)}_{\infty,H^{1+\delta}_p} \leq C \|u_1-u_2\|^{(\rho)}_{\infty, H^{1+\delta}_p},$$ where $C<1$ does not depend on $u_1 $ and $u_2$. Hence by Banach's contraction principle there exists a unique solution $u\in  C([0,T], H^{1+\delta}_p)$. 
\end{proof}

\begin{remark}\label{rem: more on solution of PDE}
	Thanks to the choice of the parameters $\delta$ and $p$ in $K(\beta,q)$ (which is always possible since $p>d/\delta$, see   \cite{flandoli_et.al14} for more details) and to Lemma \ref{lm: fractional Morrey ineq}, we have the embedding of $H^{1+\delta}_p$ in $C^{1, \alpha}$, where $\alpha=\delta-d/p$. So for each $t\in[0,T]$, the solution $u(t)$ as a function of $x$ is in fact bounded, differentiable and the first derivative is H\"older continuous, $u(t)\in C^{1, \alpha}$. 
\end{remark}

We will use \cite[Proposition 11]{flandoli_et.al14}  several times in this paper.  We recall it here for the reader's convenience.

\begin{proposition} 
	\label{prop: 11inFlIsRu}
	Let $h\in L^{\infty}\lt([0,T]; H_p^{-\beta}\rt)$ and $g:[0,T]\to H_p^{-\beta}$ for $\beta\in \R$ be defined as
	\[
	g(t)=\int^t_0 P_p(t-r) h(r)\d r.
	\]
	Then $g\in C^{0,\gamma}\lt([0,T];H_p^{2-2\ep-\beta}\rt)$ for every $\ep>0$ and $\gamma\in (0,\ep)$. Moreover, we have
	\begin{equation}
	\label{eq: boundFl}
		\|g(t)-g(s)\|_{H_p^{2-2\ep-\beta}}
		\le C (t-s)^\gamma\lt((t-s)^{\ep-\gamma}  +s^{\ep-\gamma}\rt)\|h\|_{\infty,H_p^{-\beta}}.
	\end{equation}
\end{proposition}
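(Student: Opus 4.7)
The plan is to exploit two ingredients: the mapping property \eqref{eq: mapping of Pt} (more precisely, its general form $\|P_p(t)w\|_{H^s_p}\le Ct^{-(s+\beta)/2}\|w\|_{H^{-\beta}_p}$ valid whenever $s+\beta>0$) together with the standard analytic-semigroup H\"older estimate $\|(P_p(\tau)-I)v\|_{H^s_p}\le C\tau^\gamma\|v\|_{H^{s+2\gamma}_p}$ for $\gamma\in(0,1)$, which follows from $e^{-t}P_p(t)$ being a bounded analytic semigroup generated by $-A_p$ and from the identification $H^{s+2\gamma}_p=D(A_p^{(s+2\gamma)/2})$. For $0\le s<t\le T$, I would decompose
\begin{align*}
g(t)-g(s)=\int_s^t P_p(t-r)h(r)\,\d r+\int_0^s\bigl(P_p(t-r)-P_p(s-r)\bigr)h(r)\,\d r
\end{align*}
and estimate the two pieces separately in $H_p^{2-2\ep-\beta}$.

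For the first piece, the mapping property at target exponent $2-2\ep-\beta$ gives $\|P_p(t-r)h(r)\|_{H_p^{2-2\ep-\beta}}\le C(t-r)^{-(1-\ep)}\|h\|_{\infty,H_p^{-\beta}}$, so integrating the singularity $(t-r)^{-(1-\ep)}$ over $[s,t]$ yields a contribution of order $(t-s)^\ep=(t-s)^\gamma(t-s)^{\ep-\gamma}$. For the second piece, I would factor $P_p(t-r)-P_p(s-r)=(P_p(t-s)-I)P_p(s-r)$, apply the analytic-semigroup estimate to pay $(t-s)^\gamma$ in exchange for $2\gamma$ additional derivatives, and then invoke the mapping property once more at exponent $2-2\ep-\beta+2\gamma$ to absorb those derivatives against $h(r)\in H^{-\beta}_p$, obtaining
\[
\|(P_p(t-s)-I)P_p(s-r)h(r)\|_{H_p^{2-2\ep-\beta}}\le C(t-s)^\gamma(s-r)^{-(1-\ep+\gamma)}\|h\|_{\infty,H_p^{-\beta}}.
\]
Integrability in $r\in[0,s]$ requires $1-\ep+\gamma<1$, i.e.\ $\gamma<\ep$, and it produces the factor $s^{\ep-\gamma}/(\ep-\gamma)$. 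Summing the two contributions yields the bound \eqref{eq: boundFl}, and the H\"older continuity $g\in C^{0,\gamma}([0,T];H_p^{2-2\ep-\beta})$ follows because the quantity $(t-s)^{\ep-\gamma}+s^{\ep-\gamma}$ is uniformly bounded by $2T^{\ep-\gamma}$ on $[0,T]$.

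The main obstacle is the careful bookkeeping of three exponents simultaneously: the negative regularity $-\beta$ of $h$, the target regularity $2-2\ep-\beta$, and the $2\gamma$ derivatives traded for time regularity via analyticity. The strict inequality $\gamma<\ep$ is dictated precisely by the integrability condition at $r=s$ in the second piece, which is exactly the sharp constraint appearing in the statement. A minor technical point that must be checked is that the general form of the mapping property of $P_p$ is available at the exponents used; this is standard once one recalls that $e^{-t}P_p(t)$ is a bounded analytic semigroup on $L^p$ and that $A_p^{\alpha/2}$ is an isomorphism between $H^s_p$ and $H^{s-\alpha}_p$ for every $\alpha\in\R$.
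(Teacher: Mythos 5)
Your argument is correct, and it is the standard proof of this estimate: decompose $g(t)-g(s)$ into a tail integral over $[s,t]$ and a difference $\int_0^s(P_p(t-r)-P_p(s-r))h(r)\,\d r$, handle the first with the smoothing bound of order $(t-r)^{-(1-\ep)}$, and handle the second by writing $P_p(t-r)-P_p(s-r)=(P_p(t-s)-I)P_p(s-r)$, paying $(t-s)^\gamma$ via the analytic-semigroup H\"older estimate in exchange for $2\gamma$ derivatives, and absorbing the extra derivatives in a second application of the mapping property at exponent $2-2\ep-\beta+2\gamma$. The integrability constraint $1-\ep+\gamma<1$ is precisely where the hypothesis $\gamma<\ep$ enters, and your bookkeeping of the three exponents is right. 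Note that the present paper does not prove this proposition itself but cites it from \cite[Proposition~11]{flandoli_et.al14}; your argument is the same route one finds there. Two minor points worth being explicit about, though neither affects the outcome: (i) the mapping property in the form \eqref{eq: mapping of Pt} is stated in the paper only for $\delta+\beta<1$, but the general smoothing estimate $\|P_p(t)w\|_{H^s_p}\le Ce^t t^{-(s+\beta)/2}\|w\|_{H^{-\beta}_p}$ for any $s+\beta>0$ does hold on $[0,T]$, since $\|A_p^\alpha e^{-tA_p}\|\le C_\alpha t^{-\alpha}$ for every $\alpha\ge0$ for a bounded analytic semigroup, and you use exponents with $s+\beta=2-2\ep$ and $s+\beta=2-2\ep+2\gamma$, both strictly less than $2$; (ii) the H\"older estimate $\|(P_p(\tau)-I)v\|_{H^s_p}\le C\tau^\gamma\|v\|_{H^{s+2\gamma}_p}$ is directly available for $e^{-\tau}P_p(\tau)$, and one transfers it to $P_p(\tau)$ on the bounded interval $[0,T]$ by writing $P_p(\tau)-I=(e^{-\tau}P_p(\tau)-I)+(1-e^{-\tau})P_p(\tau)$ and using $1-e^{-\tau}\le\tau\le T^{1-\gamma}\tau^\gamma$.
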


The proof of bound (\ref{eq: boundFl}) can be found in the proof of \cite[Proposition 11]{flandoli_et.al14}.

Additionally we can show the following lemma.

\begin{lemma}
	The mild solution $u$ of \eqref{eq: PDE backward Rd} 
	is  H\"older continuous in time of any order  $\gamma<\frac{1-\delta-\beta}{2}$, that is, $u\in C^{0,\gamma}([0,T];$ $H^{1+\delta}_p)$.
\end{lemma}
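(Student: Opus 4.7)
The plan is to estimate $\|u(t)-u(s)\|_{H^{1+\delta}_p}$ for $0\le s<t\le T$ by splitting the mild formulation (in the forward form \eqref{eq: mild solution PDE v}) into three pieces: the semigroup acting on the initial datum, the convolution with $b\nabla u$, and the convolution with $f(r,u,\nabla u)$. I would treat each term separately and show each is H\"older of order $\gamma<\frac{1-\delta-\beta}{2}$.

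For the initial-datum term $P_p(t)\Phi-P_p(s)\Phi$, I would exploit the fact that $(P_p(t),t\ge0)$ is analytic and that $\Phi\in H^{1+\delta+2\gamma}_p$ by the first bullet of Assumption \ref{ass: f in sobolev space}. Writing $P_p(t)\Phi-P_p(s)\Phi=(P_p(t-s)-I)P_p(s)\Phi$ and using the standard bound $\|(P_p(\tau)-I)v\|_{H^{1+\delta}_p}\le C\tau^\gamma\|v\|_{H^{1+\delta+2\gamma}_p}$ for the fractional powers of the generator (equivalently, that $A_p^{-\gamma}$ is an isomorphism between $H^{1+\delta}_p$ and $H^{1+\delta+2\gamma}_p$), one obtains the H\"older bound of order $\gamma$. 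Note that the condition $\gamma<\frac{1-\delta-\beta}{2}$ is exactly what makes this bullet point of Assumption \ref{ass: f in sobolev space} consistent.

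For the convolution $\int_0^{\cdot}P_p(\cdot-r)(b(r)\nabla u(r))\,dr$, I would first apply Lemma \ref{lm: pointwise product} (with $g=b(r)\in H^{-\beta}_q$ and $h=\nabla u(r)\in H^{\delta}_p$) to conclude that $r\mapsto b(r)\nabla u(r)$ lies in $L^\infty([0,T];H^{-\beta}_p)$ with norm controlled by $\|b\|_{\infty,H^{-\beta}_q}\|u\|_{\infty,H^{1+\delta}_p}$, which is finite by Theorem \ref{thm: PDE backward}. Then Proposition \ref{prop: 11inFlIsRu} gives that this convolution belongs to $C^{0,\gamma'}([0,T];H^{2-2\ep-\beta}_p)$ for any $\ep>0$ and $\gamma'\in(0,\ep)$. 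Choosing $\ep$ such that $2-2\ep-\beta\ge 1+\delta$, i.e.\ $\ep\le \frac{1-\delta-\beta}{2}$, and using the continuous embedding $H^{2-2\ep-\beta}_p\hookrightarrow H^{1+\delta}_p$, yields the desired H\"older exponent.

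For the convolution involving $f$, Assumption \ref{ass: f in sobolev space} (Lipschitz continuity plus bound at $0$) together with $u\in C([0,T];H^{1+\delta}_p)$ gives $r\mapsto f(r,u(r),\nabla u(r))\in L^\infty([0,T];H^0_p)$. Applying Proposition \ref{prop: 11inFlIsRu} with $\beta=0$, this convolution lies in $C^{0,\gamma'}([0,T];H^{2-2\ep}_p)$, and the requirement $2-2\ep\ge 1+\delta$ gives the constraint $\ep\le\frac{1-\delta}{2}$, which is strictly weaker than for the $b\nabla u$ term. Combining the three estimates, the binding constraint comes from the $b\nabla u$ contribution and produces exactly the bound $\gamma<\frac{1-\delta-\beta}{2}$ claimed in the statement. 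The main subtle point is matching the H\"older exponent arising from the three sources; the initial condition term is the one forcing the extra regularity assumption on $\Phi$, and this has been engineered precisely to be consistent with the exponent coming from Proposition \ref{prop: 11inFlIsRu} applied to $b\nabla u$.
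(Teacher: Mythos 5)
Your proof is correct and takes essentially the same route as the paper's: the paper's own one-line proof points to Proposition~\ref{prop: 11inFlIsRu} with $\varepsilon=\frac{1-\delta-\beta}{2}$ and the H\"older continuity of $P_p(\cdot)\Phi$, and the three-term decomposition with the pointwise-product bound (Lemma~\ref{lm: pointwise product}) and the analytic-semigroup estimate $\|(P_p(\tau)-I)v\|_{H^{1+\delta}_p}\le C\tau^\gamma\|v\|_{H^{1+\delta+2\gamma}_p}$ that you spell out is exactly what the paper carries out in detail in the proof of the subsequent Lemma~\ref{lm: uniform bound u^n}.
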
 

\begin{proof}
	This is done using the results of Proposition \ref{prop: 11inFlIsRu}  with $\varepsilon=\frac{1-\delta-\beta}{2}$ and noting that  $P_p(\cdot)\Phi $ is $\gamma$-H\"older continuous if $\Phi\in H_p^{1+\delta+2\gamma}$, with $2\gamma<1-\delta-\beta$.
\end{proof}

\subsection{Uniform bounds on mollified mild solution}\label{ssc; uniform bounds mollified mild sol}

In the next sections we will make use of an approximating sequence $b^n$ in place of $b$. We therefore need to describe its effect on the solution of the PDE \eqref{eq: PDE backward Rd} where the coefficient $b$ is replaced by a  coefficient $b^n$, that is 
\begin{equation}\label{eq: PDE backward for u^n}
\left\{
\begin{array}{l}
  u^n_t(t,x) +  L^{b^n} u^n(t,x) + f(t,x,u^n(t,x),\nabla u^n(t,x))=0, \\
  u^n(T,x)=\Phi(x),   \\
  \forall (t,x)\in[0,T]\times \R^d,
\end{array}\right.
\end{equation}
 where $ L^{b^n} u^n(t,x) := \frac12 \Delta u^n(t,x) +  \nabla u^n(t,x)b^n(t,x)$ is the analogue of $L^b$.

If {$b^n$ is smooth, for example} $b^n\in C([0,T];C_b^1(\R^d;\R^d))$ (bounded with bounded first derivatives), then $u^n $ is a classical solution and it coincides with the mild solution found in Theorem \ref{thm: PDE backward}.  We will use this fact for example in the proof of Theorem \ref{theor: Feynman-Kac construct virtual solution}. In what follows we state and prove some continuity results which hold also for $b^n$ non-smooth.  

\begin{lemma}\label{lm: continuity of u wrt approximation}
Let Assumption \ref{ass: f in sobolev space} hold, and let $b^n\to b$ in $L^\infty\left ([0,T]; H^{-\beta}_{q} \right)$. Then
	\begin{itemize}
		\item[(i)] $u^n\to u$ in $ C([0,T];H^{1+\delta}_{p})$ and there exists a constant $C$ independent of $n$ such that
		$$
		\|u^n-u\|_{\infty,H_p^{1+\delta}}\le C\|b^n-b\|_{\infty, H_{q}^{-\beta}}.
		$$
		\item[(ii)]
		$u^{n}\rightarrow u$ and  $\nabla u^{n}\rightarrow\nabla u$ uniformly on $\left[  0,T\right]  \times\mathbb{R}^{d}$.
		
	\end{itemize}
	
\end{lemma}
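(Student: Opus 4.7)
My plan is to work directly with the mild formulation for both $u^n$ and $u$, subtract, and close an estimate in the equivalent norm $\|\cdot\|^{(\rho)}_{\infty, H^{1+\delta}_p}$, exactly along the lines of the proof of Theorem \ref{thm: PDE backward}. Setting $w^n := u^n-u$, subtracting the two mild equations yields
\begin{align*}
w^n(t) = & \int_t^T P_p(r-t)\bigl[(b^n-b)(r)\nabla u^n(r)\bigr]\,\mathrm{d}r + \int_t^T P_p(r-t)\bigl[b(r)\nabla w^n(r)\bigr]\,\mathrm{d}r \\
& + \int_t^T P_p(r-t)\bigl[f(r, u^n(r),\nabla u^n(r)) - f(r, u(r), \nabla u(r))\bigr]\,\mathrm{d}r.
\end{align*}
The first integral is a ``forcing'' term proportional to $\|b^n-b\|$, while the other two are of the same shape as the terms already controlled in the proof of Theorem \ref{thm: PDE backward}.

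The key preliminary step is a uniform-in-$n$ bound $\sup_n\|u^n\|_{\infty, H^{1+\delta}_p} \leq M < \infty$. Since $b^n\to b$ in $L^\infty([0,T];H_q^{-\beta})$, the quantities $\|b^n\|_{\infty, H_q^{-\beta}}$ are uniformly bounded, so re-running the Banach fixed-point argument of Theorem \ref{thm: PDE backward} with $b^n$ in place of $b$ produces $u^n$ together with an estimate on $\|u^n\|_{\infty, H^{1+\delta}_p}$ that depends only on $\sup_n\|b^n\|_{\infty,H_q^{-\beta}}$, $L$, $\|\Phi\|_{H^{1+\delta+2\gamma}_p}$ and $\sup_t\|f(t,0,0)\|_{H^0_p}$; hence the uniform bound $M$.

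With $M$ in hand, I estimate the three pieces. For the forcing term, Lemma \ref{lm: pointwise product} gives $\|(b^n-b)(r)\nabla u^n(r)\|_{H^{-\beta}_p} \leq c\|b^n-b\|_{\infty, H_q^{-\beta}}\|u^n(r)\|_{H^{1+\delta}_p}$, and the mapping property \eqref{eq: mapping of Pt} with integrable exponent $(1+\delta+\beta)/2 < 1$ yields an $H^{1+\delta}_p$-bound of the form $C M\|b^n-b\|_{\infty,H_q^{-\beta}}$. For the second integral, the argument in the proof of Theorem \ref{thm: PDE backward} produces a bound $C\rho^{(\delta+\beta-1)/2}\|b\|_{\infty,H_q^{-\beta}}\|w^n\|^{(\rho)}_{\infty,H^{1+\delta}_p}$; for the third, Lipschitz continuity of $f$ combined with \eqref{eq: mapping of Pt} (with $\beta = 0$) gives $C\rho^{(\delta-1)/2}\|w^n\|^{(\rho)}_{\infty,H^{1+\delta}_p}$. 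Choosing $\rho$ large enough so that the coefficient of $\|w^n\|^{(\rho)}$ is at most $1/2$, we can absorb those terms and obtain $\|w^n\|^{(\rho)}_{\infty,H^{1+\delta}_p} \leq C'\|b^n-b\|_{\infty, H_q^{-\beta}}$; equivalence of norms then gives part (i).

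Part (ii) is a direct consequence of (i) via the Fractional Morrey inequality (Lemma \ref{lm: fractional Morrey ineq}): with $\alpha = \delta - d/p > 0$, one has the embedding $\|u^n(t)-u(t)\|_{C^{1,\alpha}} \leq c\|u^n(t)-u(t)\|_{H^{1+\delta}_p}$ uniformly in $t$, and the $C^{1,\alpha}$-norm dominates both $\|\cdot\|_{L^\infty}$ and $\|\nabla\cdot\|_{L^\infty}$. The main delicacy I anticipate is the uniform-in-$n$ bound on $u^n$: it is essential that the contraction constants in the fixed-point step depend only on $\sup_n\|b^n\|_{\infty,H_q^{-\beta}}$ and not on the individual $n$, which is why the convergence $b^n\to b$ (rather than mere boundedness of a generic sequence) is used in exactly that place.
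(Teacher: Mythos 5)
Your proof is correct, and the overall strategy (subtract the mild formulations, absorb with the $\rho$-weighted equivalent norm, then invoke the Fractional Morrey embedding for part (ii)) matches the paper. The one genuine difference is the algebraic split of the cross term: you write $b^n\nabla u^n - b\nabla u = (b^n-b)\nabla u^n + b\,\nabla(u^n-u)$, so the ``forcing'' term carries $\nabla u^n$ and you must first secure a uniform-in-$n$ bound on $\|u^n\|_{\infty,H^{1+\delta}_p}$; you obtain it correctly by re-running the Theorem~\ref{thm: PDE backward} fixed-point estimate with $\rho$ chosen uniformly in $n$ (using $\sup_n\|b^n\|_{\infty,H^{-\beta}_q}<\infty$, since $I(0)$ does not involve $b^n$). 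The paper instead adds and subtracts $b^n\nabla u$, giving $b^n\nabla(u^n-u) + (b^n-b)\nabla u$, so the forcing term involves only the already-known $u$ and no a priori bound on $u^n$ is needed; the absorbing coefficient is $\|b^n\|$, controlled because $b^n\to b$. Both decompositions close the estimate and yield the same constant $C$; the paper's variant is marginally leaner, while yours costs an extra (but elementary and correctly executed) preliminary step.
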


\begin{proof}
	(i) By similar calculations as in Theorem \ref{thm: PDE backward}
	and by adding and subtracting $b^n(r)\nabla u(r)$ 	 we have
	\begin{align*}
		&\|u-u^n\|^{(\rho)}_{\infty,H_p^{1+\delta}}=\sup_{t\in[0,T]} e^{-\rho t} \|u(t)-u^n(t)\|_{H_p^{1+\delta}}\\
		\le &\sup_{t\in[0,T]} e^{-\rho t} \bigg(\int^t_0 \|P_p(t-r)( \nabla u^n(r)b^n(r)-u(r)b(r))\|_{H_p^{1+\delta}}\d r \\
		&+\int^t_0 \|P_p(t-r)(f(r,u^n(r),\nabla u^n(r))-f(r,u(r), \nabla u(r)))\|_{H_p^{1+\delta}}\d r\bigg) \\
		\le &\sup_{t\in[0,T]} \bigg(C\int^t_0e^{-\rho(t-r)}(t-r)^{-\frac{1+\delta+\beta}{2}}e^{-\rho r}\|b^n(r) \|_{H_q^{-\beta}} \|u^n(r)-u(r)\|_{H_p^{1+\delta}}\d r\\
		&+ C\|b^n-b\|_{\infty,H_q^{-\beta}}\int^t_0e^{-\rho(t-r)}(t-r)^{-\frac{1+\delta+\beta}{2}}e^{-\rho r}\|u(r)\|_{H_p^{1+\delta}}\d r\bigg)\\
		&+\sup_{t\in[0,T]} e^{-\rho t}\int^t_0 r^{-\frac{1+\delta}{2}}\|f(r,u^n(r),\nabla u^n(r))-f(r,u(r),\nabla u(r))\|_{H_p^{0}}\d r\\
		\le &C\|b\|_{\infty,H_q^{-\beta}}\|u^n-u\|_{\infty, {H_p^{1+\delta}}}^{(\rho)} \rho^{\frac{\delta+\beta-1}{2}}+C\|b^n-b\|_{\infty, {H_p^{1+\delta}}}\|u\|_{{H_p^{1+\delta}}}^{(\rho)}\rho^{\frac{\delta+\beta-1}{2}}\\
		&+C\|u^n-u\|_{\infty, {H_p^{1+\delta}}}^{(\rho)}\rho^{\frac{\delta-1}{2}}.
	\end{align*}
	Therefore there exists a $\rho$ big enough so that
	$$
	1-C\lt(\rho^{\frac{\delta+\beta-1}{2}}+\rho^{\frac{\delta-1}{2}}\rt)>0.
	$$
	Hence for such $\rho$,
	\begin{align*}
		\|u-u^n\|^{(\rho)}_{\infty,H_p^{1+\delta}}
		\le &\frac{C\|u\|_{H_p^{1+\delta}}^{(\rho)}\rho^{\frac{\delta+\beta-1}{2}}}{1-C\lt(\rho^{\frac{\delta+\beta-1}{2}}+\rho^{\frac{\delta-1}{2}}\rt)}\|b^n-b\|_{\infty, {H_p^{1+\delta}}}\\
		=&C\|b^n-b\|_{\infty, {H_p^{1+\delta}}}.
	\end{align*}
	
	Part (ii) follows from part (i) and by  the Fractional Morrey inequality (Lemma \ref{lm: fractional Morrey ineq}).
\end{proof}

\begin{lemma}\label{lm: uniform bound u^n}
Let Assumption \ref{ass: f in sobolev space} hold and let $b^n$ be   such that $b^n\to b$ in $L^\infty (0,T;H_{q}^{-\beta})$.  
	The mild solution $u^n$ of \eqref{eq: PDE backward for u^n} 
	is  H\"older continuous in time of any order  $\gamma<\frac{1-\delta-\beta}{2}$, that is, $u^n\in C^{0,\gamma}([0,T];$ $H^{1+\delta}_p)$. Moreover, we have the uniform bound:
	\begin{equation}
	\label{ineq: Cgamma norm bound}
	\|u^n\|_{C^{0,\gamma}([0,T];H^{1+\delta}_p)}\le C
	\end{equation} 
	for some $C$ independent of $n$.
\end{lemma}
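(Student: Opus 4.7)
The strategy is to mirror the argument for H\"older continuity of the mild solution $u$ (the previous lemma), but checking that every constant that appears can be bounded independently of $n$. The starting point is that $u^n$ satisfies the same mild formulation as $u$ with $b$ replaced by $b^n$, and that Lemma~\ref{lm: continuity of u wrt approximation} already gives $u^n \to u$ in $C([0,T]; H^{1+\delta}_p)$. In particular there exists $M$, independent of $n$, such that $\|u^n\|_{\infty, H^{1+\delta}_p} \leq M$ for all $n$. Combined with the hypothesis $\sup_n \|b^n\|_{\infty, H^{-\beta}_q} < \infty$ (immediate from $b^n \to b$ in $L^\infty(H^{-\beta}_q)$), this supplies the needed uniformity.

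Rewrite $u^n$ in the forward form used in the proof of Theorem~\ref{thm: PDE backward},
\begin{equation*}
\bar u^n(t) = P_p(t)\Phi + \int_0^t P_p(t-r)\bar b^n(r) \nabla \bar u^n(r)\,\d r + \int_0^t P_p(t-r) \bar f(r, \bar u^n(r), \nabla \bar u^n(r))\,\d r,
\end{equation*}
and call the three summands $J_1, J_2^n, J_3^n$. The first step is to bound the two drivers uniformly in $n$ and $r$. The pointwise product Lemma~\ref{lm: pointwise product}, applied with $g = \bar b^n(r) \in H^{-\beta}_q$ and $h = \nabla \bar u^n(r) \in H^\delta_p$ (admissible since $(\delta, p) \in K(\beta, q)$), gives
\begin{equation*}
\|\bar b^n(r) \nabla \bar u^n(r)\|_{H^{-\beta}_p} \leq c \|\bar b^n(r)\|_{H^{-\beta}_q}\|\nabla \bar u^n(r)\|_{H^\delta_p} \leq c\, \|b^n\|_{\infty, H^{-\beta}_q}\, M.
\end{equation*}
Similarly, Assumption~\ref{ass: f in sobolev space} (Lipschitz continuity of $f$ together with boundedness of $f(\cdot,0,0)$) yields $\|\bar f(r, \bar u^n(r), \nabla \bar u^n(r))\|_{H^0_p} \leq C(1+M)$, again uniformly.

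The second step is to apply Proposition~\ref{prop: 11inFlIsRu} to $J_2^n$ with $\varepsilon = (1-\delta-\beta)/2$, so that $2-2\varepsilon - \beta = 1+\delta$, and to $J_3^n$ with $\beta = 0$ and $\varepsilon = (1-\delta)/2$, so that $2-2\varepsilon = 1+\delta$. The bound \eqref{eq: boundFl} then produces, for any $\gamma < (1-\delta-\beta)/2$ (the binding constraint),
\begin{equation*}
\|J_k^n(t) - J_k^n(s)\|_{H^{1+\delta}_p} \leq C_T (t-s)^\gamma, \qquad k = 2, 3,
\end{equation*}
with $C_T$ depending only on $T$, the Lipschitz constant of $f$, $\sup_n\|b^n\|_{\infty, H^{-\beta}_q}$, and $M$, hence independent of $n$. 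For the initial-condition term $J_1(t) = P_p(t)\Phi$, the hypothesis $\Phi \in H^{1+\delta+2\gamma}_p$ and the analyticity of $P_p(\cdot)$ yield the same $\gamma$-H\"older estimate in $H^{1+\delta}_p$ via a standard semigroup bound, and this term does not involve $n$ at all. Summing the three contributions gives \eqref{ineq: Cgamma norm bound}.

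\textbf{Expected obstacle.} There is no single hard step; the only substantive point is the uniform $L^\infty(H^{1+\delta}_p)$-bound on $u^n$, since otherwise the constant in the pointwise-product estimate (and in the Lipschitz estimate on $f$) would depend on $n$ through $\|u^n\|_{\infty, H^{1+\delta}_p}$. This is handed to us by Lemma~\ref{lm: continuity of u wrt approximation}, so the remainder of the argument is a careful re-reading of the previous H\"older estimate with $n$-uniform constants tracked throughout.
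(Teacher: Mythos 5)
Your proposal is correct and follows essentially the same route as the paper: split $u^n$ into the initial-condition term and two integral terms, bound the pointwise product $b^n\nabla u^n$ and the $f$-term in a negative-order norm uniformly in $n$ (using $u^n\to u$ in $C([0,T];H^{1+\delta}_p)$ from Lemma~\ref{lm: continuity of u wrt approximation} and the boundedness of $\|b^n\|_{\infty,H^{-\beta}_q}$), then feed these into Proposition~\ref{prop: 11inFlIsRu} to get $n$-uniform H\"older estimates, and handle $P_p(\cdot)\Phi$ via the standard analytic-semigroup bound using $\Phi\in H^{1+\delta+2\gamma}_p$. The only cosmetic difference is that you invoke Proposition~\ref{prop: 11inFlIsRu} with $\beta=0$ for the $f$-term (so $\varepsilon=(1-\delta)/2$), whereas the paper embeds $H^0_p\hookrightarrow H^{-\beta}_p$ and keeps the same $\varepsilon\le(1-\delta-\beta)/2$ for both integral terms; either way the binding constraint is $\gamma<(1-\delta-\beta)/2$, and the conclusion is identical.
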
 
\begin{proof}
	We recall that 
	\begin{equation}\label{eq: Cgamma norm}
	\begin{aligned}
	\|u^n\|_{C^{0,\gamma}([0,T]; H^{1+\delta}_p)}
	=&\sup_{0\le t\le T}\|u^n(t)\|_{H^{1+\delta}_p}\\& +\sup_{0\le s<t\le T} \frac{\|u^n(t)-u^n(s)\|_{H_p^{1+\delta}}}{|t-s|^\gamma}.
	\end{aligned}
	\end{equation}
		By Lemma \ref{lm: continuity of u wrt approximation}, the first term on the right-hand side of (\ref{eq: Cgamma norm}) is bounded by 
	\[
	\|u^n\|_{\infty,H_p^{1+\delta}}\le C\|u\|_{\infty,H_p^{1+\delta}},
	\]
	where the constant $C$ is independent of $n$. 
	To bound the second term, let us consider the difference $u^n(t)-u^n(s)$ as the sum of three terms
	\[
	(P_p(t)\Phi-P_p(s)\Phi) + (g_1^n(t)-g^n_1(s)) +(g_2^n(t)-g_2^n(s)),
	\]
	where
	\[
	g_1^n(t)=\int^t_0P_p({t-r}) \nabla u^n(r) b^n(r)\d r
	\]
	and 
	\[
	g_2^n(t)=\int^t_0P_p({t-r}) f(r,u^n(r),\nabla u^n(r))\d r.
	\]
	
	Observe that since $\Phi\in H^{1+\delta+2\gamma}_p$, then $A^{\frac{1+\delta}{2}}\Phi\in H^{2\gamma} $ hence it belongs to $D(A^\gamma)$ and so does $P_p(s)A^{\frac{1+\delta}{2}}\Phi$. We have 
	\begin{equation*}
		\begin{aligned}
			&\|P_p(t)\Phi-P_p(s)\Phi\|_{H_p^{1+\delta}}\\
			\le &C  \|(P_p({t-s})-I)P_p(s) A^{\frac{1+\delta}{2}}\Phi\|_{H_p^{0}}\\
			\le & C(t-s)^\gamma \|P_p(s)A^{\frac{1+\delta}{2}+\gamma}\Phi\|_{H_p^{0}}\\
			\le & C(t-s)^\gamma,
		\end{aligned}
	\end{equation*} 
	where we have used the fact that for any $\phi \in D(A^\gamma )$ then $\|P_t\phi-\phi\|_{H_p^0}\leq C_\gamma t^\gamma \|A^\gamma \phi\|_{H^0_p}$.
	Observe also that for $\ep>0$ such that $1+\delta\le 2-2\ep-\beta, i.e., \ep\le\frac{1-\delta-\beta}{2}$, we have, 	for $i=1, 2$,
	\begin{equation}\label{eq: bound gi}
	\|g_i^n(t)-g_i^n(s)\|_{H^{1+\delta}_p}\le \|g_i^n(t)-g_i^n(s)\|_{H_p^{2-2\ep-\beta}}.
	\end{equation}
    Moreover, for fixed $r\in[0,T]$, we have
	\begin{equation*}
		\begin{aligned}
			\|\nabla u^n(r) b^n(r)\|_{H_p^{-\beta}}\le& C \|b^n(r)\|_{H_q^{-\beta}} \|\nabla u^n(r)\|_{H_p^\delta}\\
			\le & C\|b^n\|_{\infty,H_q^{-\beta}} \|u^n\|_{\infty,H_p^{1+\delta}}.
		\end{aligned}
	\end{equation*}
	Hence by Proposition \ref{prop: 11inFlIsRu} applied to $g_1^n$ and using \eqref{eq: bound gi} we get
	\begin{equation*}
		\begin{aligned}
			\|g_1^n(t)-g_1^n(s)\|_{H_p^{1+\delta}}
			\le& C (t-s)^{\gamma} \lt((t-s)^{\ep-\gamma} + s^{\ep-\gamma}\rt)\| {\nabla u^nb^n}\|_{\infty,H_p^{-\beta}}\\
			\le & C (t-s)^{\gamma} \lt((t-s)^{\ep-\gamma} + s^{\ep-\gamma}\rt),
		\end{aligned}
	\end{equation*}
	where $C$ is independent of $n$ because $u^n\to u$ in $C([0,T],H_p^{1+\delta})$ by Lemma \ref{lm: continuity of u wrt approximation} and $b^n\to b$ in $L^\infty (0,T;H_q^{-\beta})$ by hypothesis.
	
	The difference involving $g_2$ is similar, but instead we use the Lipschitz property of $f$ to get 
	\begin{equation*}
		\begin{aligned}
			&\|f(r,u^n(r),\nabla u^n(r))\|_{H_p^{-\beta}}\\
			\le& \|f(r,u^n(r),\nabla u^n(r))\|_{H_p^{0}}\\
			\le& C  \|f(r,u^n(r),\nabla u^n(r))-f(r,0,0)\|_{H_p^{0}} +C\|f(r,0,0)\|_{H_p^{0}}\\
			\le & C\lt(1+\|u^n(r)\|_{H_p^{1+\delta}} +\|\nabla u^n(r)\|_{H_p^\delta } \rt)\\
			\le & C\lt(1+\|u^n\|_{\infty,H_p^{1+\delta}}\rt),
		\end{aligned}
	\end{equation*}
having also used the fact that $\sup_r \|f(r,0,0)\|_{H_p^{0}} <c$ by Assumption \ref{ass: f in sobolev space}.	Hence by Proposition \ref{prop: 11inFlIsRu} we get
	\begin{equation*}
		\begin{aligned}
			&\|g_2^n(t)-g_2^n(s)\|_{H_p^{1+\delta}}\\
			\le& C (t-s)^\ep \|f(\cdot,u^n,\nabla u^n)\|_{\infty,H_p^{-\beta}} +C(t-s)^\gamma s^{\ep-\gamma}\|f(\cdot,u^n,\nabla u^n)\|_{\infty,H_p^{-\beta}}\\
			\le & C\lt(1+\|u^n\|_{\infty,H_p^{1+\delta}}\rt)\lt((t-s)^\ep+(t-s)^\gamma s^{\ep-\gamma}\rt)\\
			\le & C (t-s)^{\gamma} \lt((t-s)^{\ep-\gamma} + s^{\ep-\gamma}\rt).
		\end{aligned}
	\end{equation*}
	where $C$ is independent of $n$.  
	Putting the three terms together we get 
	\begin{equation*}
		\begin{aligned}
			&\|u^n(t)-u^n(s)\|_{H_p^{1+\delta}}\\
			\le &\|P_p(t)\Phi-P_p(s)\Phi\|_{H_p^{1+\delta}}+
			\|g^n_1(t)-g^n_1(s)\|_{H_p^{1+\delta}} +	\|g^n_2(t)-g^n_2(s)\|_{H_p^{1+\delta}}\\
			\le & C(t-s)^\gamma+2C(t-s)^{\gamma} \lt((t-s)^{\ep-\gamma} + s^{\ep-\gamma}\rt),
		\end{aligned}
	\end{equation*}
	and so the second term on the right-hand side of  (\ref{eq: Cgamma norm}) is bounded by 
	$$
	C+2C\lt((t-s)^{\ep-\gamma}+s^{\ep-\gamma}\rt)\le C(T),
	$$ 
	for $\ep$ such that $\gamma< \ep\le \frac{1-\delta-\beta}{2}$, which is always possible since $2\gamma<1-\delta-\beta$ by assumption.
\end{proof}

Both for $u$ and $u^n$ we have desirable continuity properties and bounds which are uniform in $n$.

\begin{lemma}\label{lm: properties of solution u and u^n of PDE}
Let Assumption \ref{ass: f in sobolev space} hold and let $u$ and $u^n$ be the  solutions of \eqref{eq: PDE backward Rd} and \eqref{eq: PDE backward for u^n} respectively. For $\nu=u$ and $\nu=u^n$, the following properties hold:
	
		For each $t\in[0,T]$ we have $\nu(t)  \in C^{1,\alpha}$ and there exists a positive constant $C$ independent of $n$ such that 
		\begin{equation}\label{eq: sup bound for nu}
		\sup_{0\leq t \leq T}\left( \sup_{x\in \R^d} |\nu(t,x)|\right) \leq C,
		\end{equation}
		and
		\begin{equation}\label{eq: sup bound for nu_x}
		\sup_{0\leq t \leq T}\left( \sup_{x\in \mathbb R^d} | \nabla \nu (t,x)| \right) \leq C.
		\end{equation}
		Moreover, there exists a positive constant $C$ independent of $n$ such that for any $t,s\in[0,T]$ and $x, y \in \mathbb R^d$ we have
		\begin{equation}\label{eq: holder bound for nu}
		|\nu(t,x)-\nu(s,y)|  \leq C \left( |t-s|^\gamma + |x-y| \right),
		\end{equation}
		and
		\begin{equation}\label{eq: holder bound for nu_x}
		|\nabla \nu(t,x)-\nabla \nu(s,y)|  \leq C \left( |t-s|^\gamma + |x-y|^\alpha \right),
		\end{equation}
		for any  $\gamma<1-\beta-\delta$ and for $\alpha=\delta-\frac d p $.
\end{lemma}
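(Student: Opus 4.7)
The plan is to transfer the $H^{1+\delta}_p$-valued regularity of $u$ and $u^n$ (established in Theorem \ref{thm: PDE backward} and Lemmas \ref{lm: continuity of u wrt approximation}, \ref{lm: uniform bound u^n}) into pointwise regularity in $(t,x)$ via the fractional Morrey inequality (Lemma \ref{lm: fractional Morrey ineq}). Since the standing assumption forces $(\delta,p)\in K(\beta,q)$ with $p>d/\delta$, the embedding $H^{1+\delta}_p\hookrightarrow C^{1,\alpha}$ with $\alpha=\delta-d/p>0$ is at our disposal, and it will be applied to $\nu(t,\cdot)$ and to $\nu(t,\cdot)-\nu(s,\cdot)$ for $\nu\in\{u,u^n\}$.

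For the pointwise spatial bounds \eqref{eq: sup bound for nu} and \eqref{eq: sup bound for nu_x}, I would fix $t\in[0,T]$, use Lemma \ref{lm: fractional Morrey ineq} to select the $C^{1,\alpha}$-version of $\nu(t,\cdot)$, and read off the bounds $\|\nu(t)\|_{L^\infty}+\|\nabla\nu(t)\|_{L^\infty}\le C\|\nu(t)\|_{H^{1+\delta}_p}$. Taking the supremum in $t$ is harmless because $\sup_t\|\nu(t)\|_{H^{1+\delta}_p}\le C$ uniformly in $n$: for $u$ this is Theorem \ref{thm: PDE backward}, and for $u^n$ it is the first term in the bound \eqref{ineq: Cgamma norm bound} of Lemma \ref{lm: uniform bound u^n}. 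The purely spatial parts of \eqref{eq: holder bound for nu} and \eqref{eq: holder bound for nu_x}, namely $|\nu(t,x)-\nu(t,y)|\le C|x-y|$ and $|\nabla\nu(t,x)-\nabla\nu(t,y)|\le C|x-y|^\alpha$, are then a direct consequence of the two inequalities in \eqref{eq: fractional Morrey ineq}.

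For the temporal part I would apply Lemma \ref{lm: fractional Morrey ineq} to the difference $\nu(t,\cdot)-\nu(s,\cdot)$, obtaining
\[
\|\nu(t)-\nu(s)\|_{C^{1,\alpha}}\le C\|\nu(t)-\nu(s)\|_{H^{1+\delta}_p}\le C|t-s|^\gamma,
\]
where the last step uses the $C^{0,\gamma}\bigl([0,T];H^{1+\delta}_p\bigr)$ regularity of $\nu$ (Lemma \ref{lm: uniform bound u^n} for $u^n$, and the analogous statement, already noted in Section \ref{ssc: existence and uniqueness mild sol}, for $u$). In particular $\sup_x|\nu(t,x)-\nu(s,x)|$ and $\sup_x|\nabla\nu(t,x)-\nabla\nu(s,x)|$ are both controlled by $C|t-s|^\gamma$. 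The full estimates \eqref{eq: holder bound for nu} and \eqref{eq: holder bound for nu_x} then follow from a triangle-inequality split
\[
|\nu(t,x)-\nu(s,y)|\le|\nu(t,x)-\nu(t,y)|+|\nu(t,y)-\nu(s,y)|,
\]
and analogously for $\nabla\nu$.

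The argument is essentially an assembly of already-proved ingredients, and I do not anticipate any deep obstacle. The one point that must be handled with care is the uniformity of all constants in $n$: every bound I use on $u^n$ must come either from Lemma \ref{lm: continuity of u wrt approximation}(i) (convergence of $u^n$ to $u$ in $C([0,T];H^{1+\delta}_p)$) or, more importantly, from the $n$-independent Hölder bound \eqref{ineq: Cgamma norm bound} of Lemma \ref{lm: uniform bound u^n}. As long as I avoid using any estimate that hides an $n$-dependent constant, the same $C$ serves for both $\nu=u$ and $\nu=u^n$.
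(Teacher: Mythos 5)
Your argument follows the paper's proof essentially step for step: apply the fractional Morrey embedding $H^{1+\delta}_p\hookrightarrow C^{1,\alpha}$ to $\nu(t,\cdot)$ and to the difference $\nu(t,\cdot)-\nu(s,\cdot)$, combine this with the $n$-uniform $C([0,T];H^{1+\delta}_p)$ bound (Lemma \ref{lm: continuity of u wrt approximation} (i) or the first term of \eqref{ineq: Cgamma norm bound}) and the $n$-uniform $C^{0,\gamma}([0,T];H^{1+\delta}_p)$ bound of Lemma \ref{lm: uniform bound u^n}, and finish with a triangle-inequality split in $(t,x)$. This is correct and is the same route the paper takes.
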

\begin{proof}  
	 Since $u\in C([0,T]; H^{1+\delta}_p)$ and $(\delta,p)\in K(\beta,q)$, we can apply the fractional Morrey inequality (Lemma \ref{lm: fractional Morrey ineq}) and for all $t\in[0,T]$ we get $u(t)\in C^{1,\alpha}$ with $\alpha=\delta-\frac d p $. By using the definition of the norms in $C^{1,\alpha}$ and in 
	$C([0,T]; H^{1+\delta}_p(\mathbb R))$ we get \eqref{eq: sup bound for nu} for $\nu=u$. 
	
	For $\nu=u^n$, since from Lemma \ref{lm: continuity of u wrt approximation} part (i) it holds  $u^n\to u$ in $ C([0,T];H^{1+\delta}_{p})$, then there exists a constant $C$ such that 
	\begin{equation}
	\label{eq: u bound u^n}
	\|u^n\|_{\infty,H_p^{1+\delta}}\le C\|u\|_{\infty,H_p^{1+\delta}}, \quad \forall n\geq 0.
	\end{equation}
	Then we have  
	\[
	\sup_{0\leq t \leq T}\left( \sup_{x\in \R^d} |u^n(t,x)|\right) \leq \|u^n\|_{\infty,H_p^{1+\delta}}
	\le C\|u\|_{\infty,H_p^{1+\delta}}.
	\]

	For \eqref{eq: sup bound for nu_x}, we first observe that by the definition of the norm in $C^{1,\alpha}$ and the continuous embedding  $ H^{1+\delta}_p \subset C^{1,\alpha}$ we have
	\begin{align*}
		&\sup_{x\in \mathbb R} |  \nabla u(t,x)|  \leq  \| u (t)\|_{C^{1,\alpha}}
		\leq  \| u (t)\|_{H^{1+\delta}_p}\\
		\leq &\sup_{t\in[0,T]} \| u (t)\|_{H^{1+\delta}_p}
		=  \| u \|_{C([0,T];H^{1+\delta}_p)} =:C,
	\end{align*}
	where the last bound is due to the fact that $u\in C([0,T];H^{1+\delta}_p) $. This proves \eqref{eq: sup bound for nu_x} for $\nabla \nu= \nabla u$. Bound \eqref{eq: sup bound for nu_x} for $\nabla \nu=\nabla u^n$ is obtained analogously by using (\ref{eq: u bound u^n}).
	
	To prove  \eqref{eq: holder bound for nu}, let $(t,x), (s,y) \in [0,T]\times \mathbb R^d $. We have
	\begin{align*}
		&|  u(t,x) - u(s,y) |\\
		\leq&  | u(t,x) -u(s,x)| + |u(s,x) -u(s,y)|\\
		\leq & \sup_{x\in \mathbb R^d} |u(t,x) - u(s,x)| + |u(s,x) - u (s,y)|\\
		\leq &   \| u(t,\cdot) -u(s,\cdot)\|_{C^{1,\alpha}} +  \|u(s,\cdot)\|_{C^{1,\alpha}} |x-y|  \\
		\leq &   \| u(t,\cdot) -u(s,\cdot)\|_{H^{1+\delta}_p} +  \|u(s,\cdot)\|_{H^{1+\delta}_p} |x-y|  \\
		\leq & \|u\|_{C^{0,\gamma}([0,T]; H^{1+\delta}_p)} |t-s|^\gamma +  \|u\|_{C^{0,\gamma}([0,T]; H^{1+\delta}_p)} |x-y|  \\
		\leq & \|u\|_{C^{0,\gamma}([0,T]; H^{1+\delta}_p)} (|t-s|^\gamma +|x-y|),
	\end{align*}
	having used the embedding property (fractional Morrey inequality) with $\alpha= \delta - d/p$, the Lipschitz property of $u(t, \cdot)$ (due to the fact that it is differentiable) and the H\"older property of $u(\cdot)$ with values in $H^{1+\delta}_p$. Setting $C= \|u\|_{C^{0,\gamma}([0,T]; H^{1+\delta}_p)}  $ concludes the proof of  \eqref{eq: holder bound for nu} for $\nu=u$. 
	
	The bound \eqref{eq: holder bound for nu} for $\nu=u^n$ is obtained from the previous one: we proceed as the proof for $\nu=u$ and get
	\begin{equation}\label{eq: bound u^n dep on n}
	|u^n(t,x)-u^n(s,y)|  \leq \|u^n\|_{C^{0,\gamma}([0,T]; H^{1+\delta}_p)} \left( |t-s|^\gamma + |x-y| \right).
	\end{equation}
    Plugging (\ref{ineq: Cgamma norm bound}) from Lemma \ref{lm: uniform bound u^n} into \eqref{eq: bound u^n dep on n}, we get the desired result.

	To show \eqref{eq: holder bound for nu_x} for $\nabla \nu = \nabla u$ we proceed with very similar computations for $| \nabla u(t,x) - \nabla u(s,y) | $ as in the proof of \eqref{eq: holder bound for nu}, but now we use the fact that $\nabla u(s, \cdot )$ is only H\"older continuous of order $\alpha$ rather than Lipschitz continuous, that is $|\nabla u(s,x) - \nabla u (s,y)| \leq \|u(s,\cdot)\|_{C^{1,\alpha}} |x-y|^\alpha$, so we finally have 
	\begin{equation*}
		\begin{aligned}
	&| \nabla  u(t,x) - \nabla u(s,y) | \\
	\leq &|\nabla u(t,x)-\nabla u(s,x)|+|\nabla u(s,x)-\nabla u(s,y)|\\
	\le& \|u(t,\cdot)-u(s,\cdot)\|_{C^{1,\alpha}}+\|u(s,\cdot)\|_{C^{1,\alpha}}|x-y|^\alpha\\
	\leq &\|u\|_{C^{0,\gamma}([0,T]; H^{1+\delta}_p)} (|t-s|^\gamma +|x-y|^\alpha),
	    \end{aligned}
	\end{equation*}
	which is the claim with $C$ as in the previous bound.
	
	The proof of \eqref{eq: holder bound for nu_x} for $\nabla \nu= \nabla u^n$ is similar and uses (\ref{ineq: Cgamma norm bound}) in the last part.
\end{proof}

\section{Solution of BSDE \eqref{eq: forward-backward SDE transformed}}\label{sc: forward-backward SDE transformed}

\subsection{Definition of solution, existence and uniqueness}
In this section we  consider FBSDE \eqref{eq: forward-backward SDE transformed}, which we write again below for convenience
\begin{equation}\label{eq: FBSDE transformed}
	\left\{
	\begin{array}{l}	
		{X}_s^{t,x} = \ x  +\int_t^s \mathrm dW_r,\\
		{Y}_s^{t,x} = \  \Phi({X}_T^{t,x})  - \int_s^T{Z}_r^{t,x} \mathrm d{W}_r + \int_s^T f(r,{X}_r^{t,x},{Y}_r^{t,x},{Z}_r^{t,x}) \mathrm dr\\ 	
		\qquad +  \int^T_s {{Z}_r^{t,x} b(r,{X}_r^{t,x})}\mathrm{d} r,\\
		{\forall }  s\in[t,T],
	\end{array}\right.
\end{equation}
where $(W_s)_s$ is a given Brownian motion on a filtered probability space $(\Omega, \mathcal F,   \PP, \mathbb F)$ and the filtration $\mathbb F$   is the Brownian filtration. Here $f:[0,T]\times \mathbb R^d\times \mathbb R^m\times \mathbb R^{m\times d } \to \mathbb R^m$ and $\Phi: \mathbb R^d\to \mathbb R^m$. We note that $X^{t,x}:=(X_s^{t,x})_{s\in[t,T]}$ is in fact a Brownian motion starting from $x$ at time $t$.  
The major difficulty related to \eqref{eq: FBSDE transformed} is the term $\int^T_s {  {Z}_r^{t,x}b(r, {X}_r^{t,x})}\mathrm{d} r$ because $b\in L^\infty([0,T]; H^{-\beta}_{ q})$.  Given $ X^{t,x}$, we introduce the notion of \emph{virtual-strong solution} for the backward SDE in (\ref{eq: FBSDE transformed}). To do so,  we first consider the following auxiliary PDE
\begin{equation}\label{eq: PDE aux}
	\left\{
	\begin{array}{ l}
		w_t+\frac12 \Delta w = { \nabla u\, b},\\
		w(T,x) = 0,  \\
		{\forall }  (t,x)\in[0,T]\times \mathbb R^d,
	\end{array}
	\right.
\end{equation}
where $u$ is the mild solution of (\ref{eq: PDE backward Rd}). The term $ \nabla u \, b$ is defined by means of the pointwise product, and thanks to the semigroup properties (see Section \ref{sc: preliminaries} for more details) there exists a unique mild solution $w\in C([0,T]; H^{1+\delta}_p) $ to \eqref{eq: PDE aux} which is given by 
\begin{align}\label{eq: mild solution w} \nonumber
	w(t)&= {P_p({T-t})w(T)} +\int^T_t P_p(r-t){\nabla u(r)b(r)}\d r\\
	&= \int^T_t P_p(r-t){\nabla u(r) b(r)}\d r.
\end{align}
Note that by the Fractional Morrey inequality (Lemma \ref{lm: fractional Morrey ineq}) we have that $w$ can be evaluated  pointwisely since $w\in C([0,T]; C^{1,\alpha}) $ for $\alpha = \delta-\frac d p$. We use this function $w$ to give a meaning to the backward SDE in  (\ref{eq: FBSDE transformed}) as follows.  In the sequel we will drop the superscript $t,x$ for simplicity of notation.
\begin{definition}\label{def: virtual solution BSDE}
	A {\em virtual-strong solution} to  the backward SDE in  \eqref{eq: FBSDE transformed} is a couple $( {Y},  {Z})$ such that
	\begin{itemize}
		\item $Y$ is continuous and $\mathbb F$-adapted and $Z$ is $\mathbb F$-progressively measurable; 
		\item  $ \EE \left[ \sup_{r\in[t,T]}|Y_r|^2 \right ] < \infty$ and $ \EE\left[ \int_t^T |Z_r|^2 \mathrm dr\right] < \infty$;
		\item for all $s\in[t,T]$, the couple satisfies the following  backward SDE 
		\begin{align}
			\label{eq: BSDE virtual}
			{Y}_s= & \ \Phi( {X}_T)-\int^T_s {Z}_r\d  {W}_r+\int^T_sf(r, {X}_r, {Y}_r,{Z}_r)\d r\nonumber\\
			&-w(s, {X}_s)-\int^T_s \nabla w(r,  {X}_r)\d  {W}_r
		\end{align}
		$\PP$-almost surely, where $w$ is the solution of \eqref{eq: PDE aux} given by \eqref{eq: mild solution w}.
	\end{itemize}
\end{definition}
An intuitive  explanation on why we define virtual-strong solutions like this is the fact that if $b$ were  smooth, also $w$ would  be  smooth and we could apply It\^{o}'s formula to $w(\cdot,  {X})$, where $X_s=x+W_s-W_t$, to get
\begin{equation*}
	\begin{aligned}
		\d w(s,  {X}_s)&=w_t(s,  {X}_s)\d s+ \nabla w(s,  {X}_s)\d   {X}_s +\frac12 \Delta w(s,  {X}_s)\d s\\
		&={ \nabla u(s,  {X}_s) b(s,  {X}_s)}\d s + \nabla w(s,  {X}_s)\d  {W}_s.
	\end{aligned}
\end{equation*}
Therefore, we could write 
\begin{align*}
	&{w(T, {X}_T)} -w(s, {X}_s)-\int^T_s \nabla w(r,  {X}_r)\d  {W}_r\\
	&=  -w(s, {X}_s)-\int^T_s \nabla w(r,  {X}_r)\d  {W}_r\\
	&= 	\int^T_s  \nabla u(r,  {X}_r)b(r,  {X}_r)\d r \\
	&= 	\int^T_s {  Z_r b(r,  {X}_r)}\d r ,
\end{align*}
where the last equality holds because in the smooth case the solution $(Y,Z)$ could be written as $( u(\cdot,  {X}),\nabla u(\cdot,  {X}) )$. This is why the term $-w(s, {X}_s)-\int^T_s \nabla w(r,  {X}_r)\d  {W}_r$ appears in \eqref{eq: BSDE virtual} in place of $\int^T_s{  Z_r b(r,  {X}_r)}\d r$.

We recall that a {\em strong solution} of \eqref{eq: BSDE virtual} is a couple $(Y , Z )$ such that \begin{itemize}
	\item $Y  $ is continuous and $\mathbb F$-adapted, $Z $ is $\mathbb F$-progressively measurable;
	\item  $ \EE \left[ \sup_{r\in[t,T]}|Y_r |^2 \right ] < \infty$ and $ \EE\left[ \int_t^T |Z_r |^2 \mathrm dr\right] < \infty$;   
	\item \eqref{eq: BSDE virtual} holds $\PP$-almost surely.
\end{itemize}
Note that the terms involving $w$ in \eqref{eq: BSDE virtual} do not pose any extra condition because we can prove that $w$ is continuous and bounded (see Lemma \ref{lm: properties of w and nabla w} below).

The notion of virtual-strong solution for BSDE is in alignment with classical strong solutions when the drift $b$ is a function with classical regularity properties. In this case   a virtual-strong solution is also a strong solution, as illustrated  in the proposition below.
\begin{proposition}
	Let $b\in C([0,T]; C^1_b(\mathbb R^d, \mathbb R^d))$ (bounded with bounded first derivatives).
	Then the virtual-strong solution $(Y,Z)$  of the backward SDE in \eqref{eq: FBSDE transformed}   is also a strong solution.
\end{proposition}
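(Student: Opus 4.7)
The plan is to show that when $b$ has classical regularity, the auxiliary decomposition coming from $w$ can be undone via Itô's formula, and the resulting equation for $(Y,Z)$ is an ordinary BSDE whose unique strong solution is identified with $(u(\cdot,X),\nabla u(\cdot,X))$ by the classical nonlinear Feynman--Kac formula. Once we know $Z_r=\nabla u(r,X_r)$, the two drift terms $\int_s^T b(r,X_r)\nabla u(r,X_r)\,\d r$ and $\int_s^T b(r,X_r)Z_r\,\d r$ coincide, and \eqref{eq: BSDE virtual} reduces to the original backward equation in \eqref{eq: FBSDE transformed}.

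First I would observe that under $b\in C([0,T];C^1_b(\mathbb R^d,\mathbb R^d))$ standard parabolic theory (or a direct bootstrap from the mild formulations in Section \ref{sc: semi-linear PDE}) upgrades the mild solutions $u$ of \eqref{eq: PDE backward Rd} and $w$ of \eqref{eq: PDE aux} to classical $C^{1,2}$ solutions. In particular $w$ satisfies $w_t+\tfrac12\Delta w=b\,\nabla u$ pointwise with $w(T,\cdot)=0$, and $u$ satisfies $u_t+\tfrac12\Delta u+b\,\nabla u+f(\cdot,u,\nabla u)=0$ with $u(T,\cdot)=\Phi$ pointwise. Both $w,\nabla w,u,\nabla u$ are bounded on $[0,T]\times\mathbb R^d$ by Lemma \ref{lm: properties of solution u and u^n of PDE}.

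Next, I would apply Itô's formula to $w(\cdot,X)$ on $[s,T]$ using $X_r=x+W_r-W_t$: since $w(T,X_T)=0$,
\begin{equation*}
-w(s,X_s)=\int_s^T \bigl(w_t+\tfrac12\Delta w\bigr)(r,X_r)\,\d r+\int_s^T\nabla w(r,X_r)\,\d W_r=\int_s^T b(r,X_r)\nabla u(r,X_r)\,\d r+\int_s^T\nabla w(r,X_r)\,\d W_r.
\end{equation*}
Substituting this identity into \eqref{eq: BSDE virtual} cancels the stochastic integral against $\nabla w$ and turns the virtual backward equation into the ordinary BSDE
\begin{equation*}
Y_s=\Phi(X_T)-\int_s^T Z_r\,\d W_r+\int_s^T \bigl[f(r,X_r,Y_r,Z_r)+b(r,X_r)\nabla u(r,X_r)\bigr]\,\d r,\qquad s\in[t,T].
\end{equation*}

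The driver $F(r,y,z):=f(r,X_r,y,z)+b(r,X_r)\nabla u(r,X_r)$ is Lipschitz in $(y,z)$ uniformly in $(r,\omega)$ and, thanks to the boundedness of $b\,\nabla u$ and Assumption \ref{ass: f in Rd}, has $F(\cdot,0,0)\in L^2$. Hence by the Pardoux--Peng theory this BSDE admits a unique strong solution in the required integrability class. Applying Itô's formula to $u(\cdot,X)$ (classical because $u\in C^{1,2}$) and using the PDE for $u$ produces
\begin{equation*}
u(s,X_s)=\Phi(X_T)-\int_s^T\nabla u(r,X_r)\,\d W_r+\int_s^T\bigl[f(r,X_r,u(r,X_r),\nabla u(r,X_r))+b(r,X_r)\nabla u(r,X_r)\bigr]\,\d r,
\end{equation*}
so $(u(\cdot,X),\nabla u(\cdot,X))$ solves the same ordinary BSDE. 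By uniqueness, $Y_s=u(s,X_s)$ and $Z_s=\nabla u(s,X_s)$, whence $\int_s^T b(r,X_r)\nabla u(r,X_r)\,\d r=\int_s^T b(r,X_r)Z_r\,\d r$ and the displayed equation coincides with the backward SDE in \eqref{eq: FBSDE transformed}, so $(Y,Z)$ is a strong solution. The only mildly delicate point is the upgrade of the mild solutions $u,w$ to classical ones under the smooth-$b$ hypothesis, which however is standard once $b\nabla u\in C([0,T];C^{0,\alpha})$ is observed via Lemma \ref{lm: properties of solution u and u^n of PDE}.
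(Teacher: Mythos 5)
Your proof is correct and follows the same basic route as the paper: upgrade $u,w$ to classical $C^{1,2}$ solutions, apply Itô's formula to $w(\cdot,X)$ to rewrite the $w$-terms in \eqref{eq: BSDE virtual} as $\int_s^T b(r,X_r)\nabla u(r,X_r)\,\d r$, and then argue that this coincides with $\int_s^T b(r,X_r)Z_r\,\d r$. Where you improve on the paper is in making explicit why $Z_r=\nabla u(r,X_r)$: you substitute to get an ordinary Lipschitz BSDE, verify via Itô on $u(\cdot,X)$ that $(u(\cdot,X),\nabla u(\cdot,X))$ solves it, and then invoke Pardoux--Peng uniqueness. The paper's own proof passes from $\int_s^T b(r,X_r)\nabla u(r,X_r)\,\d r$ to $\int_s^T b(r,X_r)Z_r\,\d r$ without a word of justification, which is a genuine gap at this point in the exposition (the proposition precedes both Proposition \ref{prop: equivalence of backward SDEs} and the Feynman--Kac Theorem \ref{theor: Feynman-Kac construct virtual solution}, so neither can be cited). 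Your uniqueness argument is exactly what is needed to identify $Z$ with $\nabla u(\cdot,X)$ for an arbitrary virtual-strong solution, not merely the canonical one. The only cosmetic remark: both you and the paper assert without proof that the mild solutions become classical when $b$ is smooth; the paper records this as a sentence before Lemma \ref{lm: continuity of u wrt approximation}, and your observation that $b\nabla u\in C([0,T];C^{0,\alpha})$ gives the required Schauder bootstrap is the right justification.
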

\begin{proof}
	First observe that the first two conditions for $Y$ and $Z$ in Definition \ref{def: virtual solution BSDE} are the same as for strong solutions.
	
	Let $u$ be the classical solution of \eqref{eq: PDE backward Rd} and $w$  be the classical solution of \eqref{eq: PDE aux}. Then $u$ and $w$ are both at least of class $C^{1,2}$ and  by It\^o's formula applied to $w$ we have that the term $ -w(s, {X}_s)-\int^T_s \nabla w(r,  {X}_r)\d  {W}_r$ is equal to $\int^T_s { {Z}_r b(r, {X}_r)}\mathrm{d} r$, hence  the BSDE in \eqref{eq: FBSDE transformed} holds $\PP$-a.s.. 
\end{proof}

We remark that, although every term in the backward SDE \eqref{eq: BSDE virtual} is well defined, this SDE is not written in a classical form. Hence to find a virtual-strong solution we transform \eqref{eq: BSDE virtual} using the solution of the PDE \eqref{eq: PDE aux}, in particular we apply the transformation $y\mapsto y+w(s,x)$ where $w$ is the solution of the PDE (\ref{eq: PDE aux}). This transformation could be regarded as the analogous of the Zvonkin transformation for SDEs to get rid of a (singular) drift. More precisely, we set $\wh Y_s := Y_s+w(s,X_s) $ and $\wh Z_s := Z_s+\nabla w(s,X_s) $ for all $s\in[t,T]$ and $\wh{f}(r,x,y,z):=f(r,x,y-w(r,x), z-\nabla w(r,x))$, and  we get the following {\em auxiliary backward    SDE}
\begin{align}
	\label{eq: BSDE virtual equivalent}
	\wh{Y}_s=\Phi( {X}_T)-\int^T_s\wh{Z}_r\d  {W}_r+\int^T_s\wh{f}(r, {X}_r,\wh{Y}_r,\wh{Z}_r)\d r,
\end{align}
for all $s\in[t,T]$.

It turns out that indeed the  BSDEs  (\ref{eq: BSDE virtual}) and (\ref{eq: BSDE virtual equivalent}) are equivalent as shown in the following proposition.
\begin{proposition}\label{prop: equivalence of backward SDEs}
	Let $ {X}$ be a  Brownian motion starting from $x$ at time $t$ and $\mathbb F$ be the Brownian filtration generated by $W$. Then 
	\begin{itemize}
		\item[(i)] If $( {Y}, {Z})$ is a virtual-strong solution of the backward SDE in \eqref{eq: FBSDE transformed}, then 
		$$(\wh{Y},\wh{Z}):=( {Y}+w(\cdot, {X}), {Z}+\nabla w(\cdot, {X}))$$ is a strong solution of  \eqref{eq: BSDE virtual equivalent}.
		\item[(ii)] If $(\wh{Y},\wh{Z})$ is a strong solution of \eqref{eq: BSDE virtual equivalent}, then $$( {Y} , {Z} ):=(\wh{Y}-w(\cdot, {X}),\wh{Z}-\nabla w(\cdot, {X}))$$ is a virtual-strong solution of the backward SDE in  \eqref{eq: FBSDE transformed}.
	\end{itemize}
\end{proposition}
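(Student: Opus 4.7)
The proof is essentially a direct algebraic manipulation: each of the two substitutions $y\mapsto y+w(s,x)$, $z\mapsto z+\nabla w(s,x)$ and its inverse reshuffles the terms of \eqref{eq: BSDE virtual} into those of \eqref{eq: BSDE virtual equivalent} (and vice versa) without generating any new stochastic integrals. The only analytic input I would need is the boundedness and continuity of $w$ and $\nabla w$ from the forthcoming Lemma \ref{lm: properties of w and nabla w}, which is what allows the integrability and measurability conditions in the two notions of solution to transfer.

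\textbf{Part (i).} Starting from a virtual-strong solution $(Y,Z)$, I would set $\wh Y_s:=Y_s+w(s,X_s)$ and $\wh Z_s:=Z_s+\nabla w(s,X_s)$ and add $w(s,X_s)$ to both sides of \eqref{eq: BSDE virtual}. The $-w(s,X_s)$ term on the right-hand side cancels, and the two It\^o integrals $\int_s^T Z_r\,\d W_r$ and $\int_s^T \nabla w(r,X_r)\,\d W_r$ combine into $\int_s^T \wh Z_r\,\d W_r$. Rewriting $f(r,X_r,Y_r,Z_r)=f(r,X_r,\wh Y_r-w(r,X_r),\wh Z_r-\nabla w(r,X_r))=\wh f(r,X_r,\wh Y_r,\wh Z_r)$ by the very definition of $\wh f$ then yields exactly \eqref{eq: BSDE virtual equivalent}. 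To check that $(\wh Y,\wh Z)$ qualifies as a strong solution: continuity of $\wh Y$ follows from continuity of $Y$ and of $s\mapsto w(s,X_s)$ (since $w\in C([0,T];C^{1,\alpha})$); adaptedness and progressive measurability are preserved because $w$ and $\nabla w$ are deterministic; and the integrability bounds $\EE[\sup_r|\wh Y_r|^2]<\infty$ and $\EE[\int_t^T|\wh Z_r|^2\d r]<\infty$ follow from the analogous ones for $(Y,Z)$ together with the uniform sup bounds on $w$ and $\nabla w$ via $(a+b)^2\le 2a^2+2b^2$.

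\textbf{Part (ii).} Starting from a strong solution $(\wh Y,\wh Z)$ of \eqref{eq: BSDE virtual equivalent}, I set $Y_s:=\wh Y_s-w(s,X_s)$ and $Z_s:=\wh Z_s-\nabla w(s,X_s)$, subtract $w(s,X_s)$ from both sides of \eqref{eq: BSDE virtual equivalent}, split the It\^o integral as $\int_s^T\wh Z_r\,\d W_r=\int_s^T Z_r\,\d W_r+\int_s^T \nabla w(r,X_r)\,\d W_r$, and unfold the definition of $\wh f$ to recover \eqref{eq: BSDE virtual}. The required regularity and integrability for $(Y,Z)$ transfer back by exactly the same elementary bounds as in part (i).

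The main ``obstacle'' is really only bookkeeping: there is no genuine analytical difficulty because, by Lemma \ref{lm: properties of w and nabla w}, the correction $w$ and its gradient are bounded continuous deterministic functions, so the state transformations are bi-Lipschitz in $(y,z)$ uniformly in $(s,x)$ and preserve all integrability and measurability requirements. The only place where one has to be mildly careful is to ensure that the ``free'' terms $-w(s,X_s)$ and $-\int_s^T\nabla w(r,X_r)\,\d W_r$ appearing in \eqref{eq: BSDE virtual} are genuinely well defined as continuous, adapted processes with finite second moment, but this is immediate from the same sup bounds together with the It\^o isometry.
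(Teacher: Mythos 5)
Your proposal is correct and follows essentially the same route as the paper: both proofs first observe that boundedness and continuity of $w$ and $\nabla w$ (Lemma \ref{lm: properties of w and nabla w}) guarantee that the affine shift $(y,z)\mapsto(y+w,z+\nabla w)$ preserves the continuity, adaptedness/progressive measurability, and square-integrability requirements, and then carry out the same term-by-term algebraic rearrangement of \eqref{eq: BSDE virtual} into \eqref{eq: BSDE virtual equivalent} (and back) using the definition of $\wh{f}$. Your write-up merely spells out the integrability transfer in slightly more detail than the paper does.
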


\begin{proof}
The proof is very easy and straight-forward, so we omit it.
\end{proof}

We will now prove existence and uniqueness of the virtual-strong solution for the FBSDE \eqref{eq: FBSDE transformed}. For this we need Assumption \ref{ass: f in Rd}.

\begin{theorem}\label{theor: exist uniq of backward virtual solution}
	Under Assumption \ref{ass: f in Rd} there exists a unique virtual-strong solution $(Y,Z)$ to the backward SDE in (\ref{eq: FBSDE transformed}). 
\end{theorem}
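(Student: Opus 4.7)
The strategy is to exploit Proposition \ref{prop: equivalence of backward SDEs}, which sets up a bijection between virtual-strong solutions of the backward SDE in \eqref{eq: FBSDE transformed} and strong solutions of the auxiliary BSDE \eqref{eq: BSDE virtual equivalent}. Thus it suffices to establish existence and uniqueness of a strong solution to
\begin{equation*}
\wh Y_s = \Phi(X_T) - \int_s^T \wh Z_r \d W_r + \int_s^T \wh f(r, X_r, \wh Y_r, \wh Z_r)\d r, \quad s\in[t,T],
\end{equation*}
where $\wh f(r, x, y, z) := f(r, x, y - w(r,x), z - \nabla w(r,x))$, and then transfer the conclusion back via part (ii) of Proposition \ref{prop: equivalence of backward SDEs}. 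Since $X = X^{t,x}$ is a Brownian motion (in particular adapted to the Brownian filtration), and $\Phi$ is bounded by Assumption \ref{ass: f in Rd}, classical Pardoux--Peng theory for BSDEs with Lipschitz drivers (see e.g.\ \cite{PardouxPeng90}) is the natural tool, provided we can verify the standard hypotheses for $\wh f$.

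First, I would check the Lipschitz property of $\wh f$ in $(y,z)$: for any $y,y'\in\R^d$ and $z,z'\in\R^{d\times d}$,
\begin{equation*}
|\wh f(r,x,y,z) - \wh f(r,x,y',z')| = |f(r,x,y-w,z-\nabla w) - f(r,x,y'-w,z'-\nabla w)| \leq L(|y-y'| + |z-z'|),
\end{equation*}
so $\wh f$ inherits the Lipschitz constant $L$ of $f$. Continuity of $\wh f$ in $(x,y,z)$ uniformly in $t$ follows from continuity of $f$ (Assumption \ref{ass: f in Rd}) together with continuity of $w$ and $\nabla w$, which holds because $w\in C([0,T];C^{1,\alpha})$ by the Fractional Morrey inequality applied to $w\in C([0,T]; H^{1+\delta}_p)$. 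For the integrability of the driver at zero, using the Lipschitz property and the uniform bound $\sup_{t,x}|f(t,x,0,0)|\leq C$, one obtains
\begin{equation*}
|\wh f(r,X_r,0,0)| = |f(r,X_r,-w(r,X_r),-\nabla w(r,X_r))| \leq C + L\bigl(\|w\|_\infty + \|\nabla w\|_\infty\bigr),
\end{equation*}
where the right-hand side is finite thanks to Lemma \ref{lm: properties of w and nabla w} (to be invoked from later in the paper, giving $w,\nabla w$ bounded on $[0,T]\times\R^d$). Hence $\EE\int_t^T |\wh f(r,X_r,0,0)|^2\d r < \infty$, and $\EE|\Phi(X_T)|^2 \leq \|\Phi\|_\infty^2 < \infty$.

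With these verifications in hand, the classical Pardoux--Peng existence and uniqueness theorem yields a unique pair $(\wh Y, \wh Z)$ with $\wh Y$ continuous $\mathbb F$-adapted, $\wh Z$ $\mathbb F$-progressively measurable, $\EE[\sup_{r\in[t,T]}|\wh Y_r|^2] < \infty$ and $\EE\int_t^T |\wh Z_r|^2 \d r < \infty$, solving \eqref{eq: BSDE virtual equivalent}. Defining $(Y,Z) := (\wh Y - w(\cdot, X), \wh Z - \nabla w(\cdot, X))$ and invoking Proposition \ref{prop: equivalence of backward SDEs}(ii), we obtain a virtual-strong solution of the backward SDE in \eqref{eq: FBSDE transformed}; the integrability and measurability requirements of Definition \ref{def: virtual solution BSDE} are preserved because $w$ and $\nabla w$ are bounded and continuous. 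Uniqueness is immediate: any two virtual-strong solutions $(Y,Z)$ and $(Y',Z')$ are mapped by Proposition \ref{prop: equivalence of backward SDEs}(i) to strong solutions of \eqref{eq: BSDE virtual equivalent}, which must coincide by Pardoux--Peng uniqueness, and hence $(Y,Z) = (Y',Z')$. The only delicate point in the argument is ensuring that $w$ and $\nabla w$ have enough regularity so that $\wh f$ remains a bona fide Lipschitz driver and the integrability at zero survives the shift; this reduces entirely to the boundedness and continuity established for $w$ through the PDE analysis of Section \ref{sc: semi-linear PDE}.
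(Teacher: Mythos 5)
Your proof is essentially identical to the paper's: transform the virtual BSDE \eqref{eq: BSDE virtual} into the classical auxiliary BSDE \eqref{eq: BSDE virtual equivalent} via Proposition \ref{prop: equivalence of backward SDEs}, verify the Lipschitz property of $\wh f$ and its integrability at zero using the boundedness of $w$ and $\nabla w$ from Lemma \ref{lm: properties of w and nabla w}, apply classical Lipschitz-BSDE existence and uniqueness (the paper cites \cite{ElKaroui-et.al.97}, you cite \cite{PardouxPeng90}, but these are the same result), and transfer back. The only detail the paper spells out that you leave implicit is the prior existence of the mild solution $u$ (via Remark \ref{rm: link between Assumptions} and Theorem \ref{thm: PDE backward}), which is needed before $w$ can even be defined, but your argument clearly relies on this and it does not affect correctness.
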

\begin{proof}
	By definition, a virtual-strong solution of the backward SDE in (\ref{eq: FBSDE transformed}) is a couple that solves   BSDE  \eqref{eq: BSDE virtual}, if $u$ exists. Note that by Remark \ref{rm: link between Assumptions} we know that Assumption \ref{ass: f in Rd} implies Assumption \ref{ass: f in sobolev space}, hence $u $ does exist by Theorem \ref{thm: PDE backward}. Moreover 	BSDE  \eqref{eq: BSDE virtual}  is equivalent to BSDE   \eqref{eq: BSDE virtual equivalent} by Proposition \ref{prop: equivalence of backward SDEs}.
	 
	Using the Lipschitz assumption on $f$ from Assumption \ref{ass: f in Rd} and the definition of $\wh f$, we have for any {$y,y' \in \mathbb R^m$ and $ z, z' \in \mathbb R^{m\times d}$} that 
	\begin{equation*}
		\begin{aligned}
			&|\wh{f}(t,x, y, z)-\wh{f}(t, x ,y^{\pr}, z' )|\\
			=&|f(t,x ,y-w(t,x ), z-\nabla w(t, x) )-f(t,x ,y^{\pr}-w(t, x ), z'-  \nabla w(t, x) )|\\
			\le&C( |y-y^{\pr}|+|z-z'|).
		\end{aligned}
	\end{equation*}
	Moreover  by definition of $\wh f$ we have  
	\begin{equation*}
		\begin{aligned}
			&	\EE \left[ \int^T_0  |\wh{f}(r,x+ W_r, 0,0)|^2\d r \right]\\  
			= &  \EE \left[ \int^T_0 |f(r,x+ W_r,  -w(r,x+ W_r),  -\nabla w(r, x+ W_r) )|^2\d r \right]\\
			\le &  \ C \left(1+ E \left[ \int^T_0  |f(r,x+ W_r, 0,0)|^2  \d r\right]\right) ,
		\end{aligned}
	\end{equation*} 
where we have used the fact that  $w$ and $\nabla w$ are uniformly bounded by Lemma \ref{lm: properties of w and nabla w}. The latter integral is bounded using the  assumption of $f(t,x,0,0)$, indeed
\begin{align*}
\EE \left[ \int^T_0  |f(r,x+ W_r, 0,0)|^2  \d r \right]&  \leq  \EE \int^T_0 \sup_{t,x} |f(t,x, 0,0) |^2  \d r  \leq c. 
\end{align*}
Hence   equation  \eqref{eq: BSDE virtual equivalent}   has a unique strong solution by classical results (see for example \cite[Theorem 2.1]{ElKaroui-et.al.97}).
\end{proof}

\subsection{The auxiliary PDE and the auxiliary BSDE} \label{sc: auxiliary PDE and BSDE}
We now establish several useful properties for the auxiliary PDE \eqref{eq: PDE aux} and for the auxiliary BSDE \eqref{eq: BSDE virtual equivalent}, which will be used in the next Section to prove the non-linear Feynman-Kac formula.

We start by proving a result analogous to Lemma \ref{lm: properties of solution u and u^n of PDE}.
\begin{lemma}\label{lm: properties of w and nabla w} 
	Let Assumption \ref{ass: f in sobolev space} hold and $b\in L^{\infty}(0,T; H^{-\beta}_{q})$. Then the solution $w$ is an element of $C^{0,\gamma}([0,T];H^{1+\delta}_p)$  for all $2\gamma<1-\delta-\beta$  and it enjoys the following bounds
	\begin{align}\label{eq: sup bound w}
		&\sup_{0\leq t \leq T}\left( \sup_{x\in \mathbb R^d} |w(t,x)|\right) \leq C,\\\label{eq: sup bound nabla w}
		&\sup_{0\leq t \leq T}\left( \sup_{x\in \mathbb R^d} | \nabla w (t,x)| \right) \leq C.
	\end{align}
	Furthermore, for all $t,s\in[0,T]$ and $x, y \in \mathbb R^d$ we have
	\begin{align}\label{eq: holder bound w}
		&|w(t,x)-w(s,y)|  \leq C \left( |t-s|^\gamma + |x-y| \right),\\
		&|\nabla w(t,x)-\nabla w(s,y)|  \leq C \left( |t-s|^\gamma + |x-y|^\alpha \right), \label{eq: holder bound nabla w}
	\end{align}
	where $\alpha=\delta-\frac{d}{p}$.
\end{lemma}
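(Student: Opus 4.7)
The plan is to treat $w$ just like the nonlinearity term in the proof of Lemma \ref{lm: uniform bound u^n}, exploiting the fact that $w$ has \emph{no} forcing from an initial condition and \emph{no} $f$-term: only the convolution against $b\nabla u$. The starting observation is that, by Lemma \ref{lm: pointwise product} applied pointwise in $r$,
\begin{equation*}
\|b(r)\nabla u(r)\|_{H^{-\beta}_p}\le c\|b(r)\|_{H^{-\beta}_q}\|\nabla u(r)\|_{H^{\delta}_p}\le c\|b\|_{\infty,H^{-\beta}_q}\|u\|_{\infty,H^{1+\delta}_p},
\end{equation*}
so the integrand $h(r):=b(r)\nabla u(r)$ lies in $L^{\infty}([0,T];H^{-\beta}_p)$ with a uniform norm bound (here $\|u\|_{\infty,H^{1+\delta}_p}$ is finite by Theorem \ref{thm: PDE backward}).

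\textbf{Sobolev regularity in time.} To place ourselves in the setting of Proposition \ref{prop: 11inFlIsRu}, I would pass to the time-reversed function $\bar w(t):=w(T-t)$, which satisfies the forward mild identity $\bar w(t)=\int_0^t P_p(t-r)\bar h(r)\,\d r$ with $\bar h(r)=h(T-r)\in L^{\infty}([0,T];H^{-\beta}_p)$. Proposition \ref{prop: 11inFlIsRu} then gives $\bar w\in C^{0,\gamma}([0,T];H^{2-2\varepsilon-\beta}_p)$ for any $\varepsilon>0$ and $\gamma\in(0,\varepsilon)$. Choosing $\varepsilon=\tfrac{1-\delta-\beta}{2}$ makes $2-2\varepsilon-\beta=1+\delta$, so that $w\in C^{0,\gamma}([0,T];H^{1+\delta}_p)$ for every $2\gamma<1-\delta-\beta$, with a quantitative bound of the shape
\begin{equation*}
\|w(t)-w(s)\|_{H^{1+\delta}_p}\le C(t-s)^{\gamma}\bigl((t-s)^{\varepsilon-\gamma}+s^{\varepsilon-\gamma}\bigr)\|h\|_{\infty,H^{-\beta}_p}.
\end{equation*}
A uniform bound on $\|w(t)\|_{H^{1+\delta}_p}$ itself follows by applying the mapping property \eqref{eq: mapping of Pt} under the integral: $\|w(t)\|_{H^{1+\delta}_p}\le C\int_t^T(r-t)^{-(1+\delta+\beta)/2}\|h(r)\|_{H^{-\beta}_p}\,\d r$, which is finite because $(1+\delta+\beta)/2<1$.

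\textbf{Pointwise bounds and space regularity.} With $w\in C^{0,\gamma}([0,T];H^{1+\delta}_p)$ in hand, the Fractional Morrey inequality (Lemma \ref{lm: fractional Morrey ineq}) gives, for each $t$, $w(t)\in C^{1,\alpha}$ with $\alpha=\delta-d/p$ and
\begin{equation*}
\|w(t)\|_{C^{1,\alpha}}\le c\|w(t)\|_{H^{1+\delta}_p}\le C.
\end{equation*}
This immediately yields \eqref{eq: sup bound w} and \eqref{eq: sup bound nabla w} as well as the spatial part of \eqref{eq: holder bound w}--\eqref{eq: holder bound nabla w}, namely $|w(t,x)-w(t,y)|\le C|x-y|$ and $|\nabla w(t,x)-\nabla w(t,y)|\le C|x-y|^\alpha$.

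\textbf{Joint space-time H\"older estimates.} Finally, the mixed bounds \eqref{eq: holder bound w} and \eqref{eq: holder bound nabla w} follow by the standard splitting $|w(t,x)-w(s,y)|\le|w(t,x)-w(s,x)|+|w(s,x)-w(s,y)|$ (and analogously for $\nabla w$), where the first term is controlled by $\|w(t)-w(s)\|_{C^{1,\alpha}}\le c\|w(t)-w(s)\|_{H^{1+\delta}_p}\le C|t-s|^\gamma$ using the time-H\"older bound from the previous paragraph, and the second term by the spatial Lipschitz/H\"older estimate just derived. This mirrors exactly the argument in Lemma \ref{lm: properties of solution u and u^n of PDE} applied to $w$.

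The only mildly delicate point is ensuring that the parameters line up: we need $\varepsilon\le\tfrac{1-\delta-\beta}{2}$ (so that $H^{2-2\varepsilon-\beta}_p\hookrightarrow H^{1+\delta}_p$) and $\gamma<\varepsilon$, which is precisely the condition $2\gamma<1-\delta-\beta$ stated in the lemma; no other technical obstruction arises.
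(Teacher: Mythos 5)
Your proposal is correct and follows essentially the same route as the paper's proof: bound $\|b\nabla u\|_{\infty,H^{-\beta}_p}$ via the pointwise-product lemma, apply Proposition \ref{prop: 11inFlIsRu} with $\varepsilon=\frac{1-\delta-\beta}{2}$ to get $w\in C^{0,\gamma}([0,T];H^{1+\delta}_p)$, and then push through the fractional Morrey embedding to the pointwise and joint space–time estimates. You spell out the time-reversal step needed to match the forward-integral form in Proposition \ref{prop: 11inFlIsRu}, and add a separate (redundant but harmless) sup-norm check via \eqref{eq: mapping of Pt}; the paper simply invokes Proposition \ref{prop: 11inFlIsRu} directly on \eqref{eq: mild solution w}, but the content is the same.
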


\begin{proof}
	To show that $w\in C([0,T]; H^{1+\delta}_p) $ we first  observe that 
	$ { \nabla u\,b}\in L^\infty([0,T];H^{-\beta}_p)$ since
	$$
	\| {\nabla u(s)b(s)}\|_{H^{-\beta}_p}\le C   \|\nabla u(s)\|_{H_p^\delta}\|b(s)\|_{H^{-\beta}_q},
	$$
	and taking the supremum over $s\in[0,T]$ the right-hand side is bounded by a constant which is independent of $s$. Hence 
	$$
	\| {\nabla u\,b}\|_{\infty, H^{-\beta}_p} \leq \sup_{0\le s\le T} C \|u(s)\|_{H^{1+\delta}_p}\|b\|_{\infty,H^{-\beta}_q}\le C(b,u).
	$$
	By  Proposition \ref{prop: 11inFlIsRu} applied to equation \eqref{eq: mild solution w} we have that $w\in C^{0,\gamma}([0,T];H^{2-2\ep-\beta}_p) $ for every $\ep>0$ and $\gamma \in(0,\ep)$, and setting with $\varepsilon =  \frac{1-\delta-\beta}{2} $  it implies $ w\in C([0,T]; H^{1+\delta}_p)$.
	
	The bounds \eqref{eq: sup bound w} and \eqref{eq: sup bound nabla w} follow by fractional Morrey inequality (Lemma \ref{lm: fractional Morrey ineq})
	$$
	w\in C^{0,\gamma}([0,T];H^{1+\delta}_p)\subset C^{0,\gamma}([0,T];C^{1,\alpha}),
	$$
	where $\alpha=\delta-\frac{d}{p}$. Hence the $\sup_{t,x}$ of the functions $w$ and $\nabla w$ are finite.
	
	The bound \eqref{eq: holder bound w} is clear by using the norm definition in $C^{0,\gamma}$, whereas \eqref{eq: holder bound nabla w} can be obtained by using the fact that  $  w\in C^{0,\gamma}([0,T]; C^{0,1+\alpha})$ implies  $\nabla w\in C^{0,\gamma}([0,T]; C^{0,\alpha})$  and applying the definition of the  norm in the latter space.
\end{proof}

If we  now consider a smooth coefficient $b^n$ in place of $b$  then the PDE 
(\ref{eq: PDE aux})  becomes
\begin{equation}
	\label{eq: PDE aux n}
	\left\{
	\begin{array}{l}
		w^n_t+\frac12 \Delta w^n={ \nabla u^n\,b^n},\\
		w^n(T,x)=0, \\
		{\forall }  (t,x)\in[0,T]\times \mathbb R^d.
	\end{array}
	\right.
\end{equation}

For this approximating PDE we have nice convergence properties as follows. 

\begin{lemma}\label{lm: properties of wn}
	Let Assumption \ref{ass: f in sobolev space} hold and let $b^n\to b$ in $L^{\infty}([0,T];H^{-\beta}_{q})$. Then $w^n\to w$ in $C^{0,\gamma}([0,T]; C^{1,\alpha})$ and $\nabla w^n\to \nabla w$ in $C^{0,\gamma}([0,T]; C^{0,\alpha})$. 
	In particular,  $w^n(t,x)\to w(t,x)$ and $\nabla w^n(t,x)\to \nabla w(t,x)$ uniformly on $[0,T]\times\R^d$.
\end{lemma}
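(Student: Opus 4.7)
The plan is to first express the difference $w^n-w$ as a mild solution of an auxiliary PDE and bound it in the appropriate Sobolev-valued Hölder space, then pass to Hölder spaces in $x$ via the fractional Morrey inequality. By subtracting the mild formulas for $w^n$ and $w$ we obtain
\[
w^n(t)-w(t) = \int_t^T P_p(r-t)\bigl(b^n(r)\nabla u^n(r) - b(r)\nabla u(r)\bigr)\d r,
\]
and after a time-reversal (as in \eqref{eq: mild solution PDE v}) this fits the framework of Proposition \ref{prop: 11inFlIsRu}.

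The key step is to control the driving term $h^n:=b^n\nabla u^n - b\nabla u$ in $L^\infty([0,T];H^{-\beta}_p)$. Splitting
\[
h^n = (b^n-b)\,\nabla u + b^n\,(\nabla u^n-\nabla u),
\]
and applying Lemma \ref{lm: pointwise product} to each summand, we obtain
\[
\|h^n(r)\|_{H^{-\beta}_p}
\le C\|b^n-b\|_{H^{-\beta}_q}\|\nabla u(r)\|_{H^{\delta}_p}
+ C\|b^n\|_{H^{-\beta}_q}\|\nabla u^n(r)-\nabla u(r)\|_{H^{\delta}_p}.
\]
Taking suprema in $r$ and using that $\|b^n\|_{\infty,H^{-\beta}_q}$ is uniformly bounded (from $b^n\to b$), together with Lemma \ref{lm: continuity of u wrt approximation}(i), gives
\[
\|h^n\|_{\infty,H^{-\beta}_p}\le C\bigl(\|b^n-b\|_{\infty,H^{-\beta}_q}+\|u^n-u\|_{\infty,H^{1+\delta}_p}\bigr)\to 0.
\]

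Proposition \ref{prop: 11inFlIsRu} applied with $\ep=\frac{1-\delta-\beta}{2}$ then yields $w^n-w\in C^{0,\gamma}([0,T];H^{2-2\ep-\beta}_p) = C^{0,\gamma}([0,T];H^{1+\delta}_p)$ for any $\gamma<\ep$, with a bound proportional to $\|h^n\|_{\infty,H^{-\beta}_p}$, so $w^n\to w$ in $C^{0,\gamma}([0,T];H^{1+\delta}_p)$. Applying the fractional Morrey inequality (Lemma \ref{lm: fractional Morrey ineq}) componentwise in time we get the continuous embedding $H^{1+\delta}_p\hookrightarrow C^{1,\alpha}$ with $\alpha=\delta-d/p$, which promotes the convergence to $C^{0,\gamma}([0,T];C^{1,\alpha})$. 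The convergence of gradients $\nabla w^n\to\nabla w$ in $C^{0,\gamma}([0,T];C^{0,\alpha})$ is then immediate from the definition of the $C^{1,\alpha}$ norm. Finally, uniform convergence on $[0,T]\times\R^d$ for $w^n,\nabla w^n$ follows at once from the $L^\infty$-in-$(t,x)$ estimates built into the $C^{0,\gamma}([0,T];C^{1,\alpha})$ norm.

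The only mildly delicate step is the uniform (in $n$) control of $b^n\,\nabla u^n$ in $H^{-\beta}_p$: this requires both the hypothesis $b^n\to b$ in $L^\infty([0,T];H^{-\beta}_q)$ (to get uniform boundedness of $\|b^n\|_{\infty,H^{-\beta}_q}$) and Lemma \ref{lm: continuity of u wrt approximation}(i) to control $\|\nabla u^n\|_{\infty,H^{\delta}_p}$ uniformly. Once this is set up, everything else is a straight application of previously established bounds.
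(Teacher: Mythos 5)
Your proof is correct and takes essentially the same approach as the paper: both decompose $b^n\nabla u^n - b\nabla u = (b^n-b)\nabla u + b^n(\nabla u^n-\nabla u)$, use the pointwise-product bound and Lemma \ref{lm: continuity of u wrt approximation}(i) to send that to zero in $L^\infty([0,T];H^{-\beta}_p)$, invoke Proposition \ref{prop: 11inFlIsRu} with $\ep=\frac{1-\delta-\beta}{2}$ to obtain convergence in $C^{0,\gamma}([0,T];H^{1+\delta}_p)$, and finish with the fractional Morrey embedding. The only cosmetic difference is that the paper estimates the sup-norm part of the $C^{0,\gamma}$ norm directly via the semigroup mapping property and the Hölder part via Proposition \ref{prop: 11inFlIsRu}, whereas you fold both into a single application of Proposition \ref{prop: 11inFlIsRu} (legitimate, since $w^n-w$ vanishes at the terminal time, so the proposition's modulus-of-continuity bound also controls the sup norm).
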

\begin{proof}
	By Lemma \ref{lm: properties of w and nabla w} we have that $w$ and $w^n$ are both elements of $C^{0, \gamma}([0,T]; H^{1+\delta}_p)  $. 
	The norm of $w-w^n$ in $C^{0, \gamma}([0,T]; H^{1+\delta}_p)$ has  two terms, as recalled in Section \ref{sc: preliminaries}. The  first one can be bounded by observing that  
	\begin{align*}
		w(T-t)-w^n(T-t ) 
		&=  \int_0^t P_p(r) \big({  \nabla u (r+T-t)b(r+T-t)}\\
		& \quad -   { \nabla u^n(r+T-t) b^n(r+T-t)}\big) \d r 
	\end{align*}
	and by abuse of notation we consider the semigroup simply  acting on ${  \nabla u (r)b(r)-   \nabla u^n(r)b^n(r)}$ because the regularity properties are the same. So
	\begin{align*}
		& \| w(T-t)-w^n(T-t)  \|_{H^{1+\delta}_p}\\
		&\leq  \left  \|  \int_0^t P_p(r) ({  \nabla u (r)b(r)-   \nabla u (r)b^n(r)} ) \d r \right\|_{H^{1+\delta}_p}\\
		&\quad + \left  \| \int_0^t P_p(r) ({  \nabla u (r)b^n(r)-   \nabla u^n (r)b^n(r)} ) \d r \right \|_{H^{1+\delta}_p} \\
		&\leq  \int_0^t r^{-\frac{1-\delta-\beta}{2}} \Big (  \|  u (r) \|_{H^{1+\delta}_p}     \|  b(r)-   b^n (r)   \|_{H^{-\beta}_q} \\
		&\quad +    \|   b^n(r) \|_{H^{-\beta}_q}   \|  \nabla u (r)-   \nabla u^n (r)   \|_{H^{1+\delta}_p} \Big) \d r \\
		&\leq C T^{\frac{1+\delta+\beta}{2}}  \|  b -   b^n     \|_{\infty, H^{-\beta}_q} ,
	\end{align*}
	where the constant $C$ is independent of $n$ (for $n$ large enough) because $u^n\to u$ as shown in Lemma \ref{lm: continuity of u wrt approximation}, part (i) and $b_n\to b$ by hypotheses. Thus 
\begin{align*}	
	\sup_{t\in[0,T]} \| w(t)-w^n(t)  \|_{H^{1+\delta}_p}&= \sup_{t\in[0,T]} \| w(T-t)-w^n(T-t)  \|_{H^{1+\delta}_p}\\
	&\leq  C    \|  b -   b^n     \|_{\infty, H^{-\beta}_q}.
\end{align*}
	The H\"older term in the norm of $w-w^n$ can be bounded by using Proposition \ref{prop: 11inFlIsRu} with $\ep = \frac{1-\delta-\beta}{2} $,  since the integrand  $h(r):= b(r) \nabla u (r)-  b^n(r) \nabla u^n (r)$ belongs to $H^{-\beta}_p$. Then  we have 
	\begin{equation*} 
		\frac{ \|w^n(t)-w(t) -(w^n(s)-w(s))\|_{H^{1+\delta}} }{|t-s|^\gamma}   \le 	 C\|h\|_{\infty, H^{-\beta}_p},
	\end{equation*}	
	where $C$ is independent of $n$ 
	and the norm of $h$ is   bounded by $C    \|  b -   b^n     \|_{\infty, H^{-\beta}_q} $  as done above. Hence we have shown that 	$$ w^n \to w \text{ in }  C^{0, \gamma}([0,T]; H^{1+\delta}_p)$$ 
	which implies
	$$ \nabla w^n \to \nabla w \text{ in }    C^{0, \gamma}([0,T]; H^{ \delta}_p)$$ by the continuity of the mapping $\nabla: H^{1+\delta}_p \to H^\delta_p$. 
	
	By the Sobolev embedding (Lemma \ref{lm: fractional Morrey ineq}) we have $C^{0, \gamma}([0,T]; H^{1+\delta}_p)\subset {C^{0,\gamma}([0,T]; C^{1,\alpha})}$ and so it follows that 
	\begin{equation*}
		\begin{aligned}
			\sup_{0\le t\le T}\sup_{x\in\R^d} |w^n(t,x)-w(t,x)|\le C \|b-b^n\|_{ \infty ,H^{-\beta}_q }
		\end{aligned}
	\end{equation*}	 
	and 
	\begin{equation*}
		\begin{aligned}
			\sup_{0\le t\le T}\sup_{x\in\R^d} |\nabla w^n(t,x)-\nabla w(t,x)|\le C \|b-b^n\|_{\infty , H^{-\beta}_q},
		\end{aligned}
	\end{equation*}
	which is the uniform convergence claimed.
\end{proof}

\subsection{Feynman-Kac representation formula}
In this last section we will establish a non-linear Feynman-Kac representation formula for the FBSDE \eqref{eq: FBSDE transformed} using the solution of the PDE \eqref{eq: PDE backward Rd} and of the auxiliary PDE \eqref{eq: PDE aux}.  In particular, we will construct the virtual-strong solution of \eqref{eq: FBSDE transformed} --that is a strong solution of \eqref{eq: BSDE virtual}-- by means of the mild solution of the PDE \eqref{eq: PDE backward Rd}, and we will also show that the unique mild solution can be obtained as the first component $Y$ at initial time $t$ of the virtual-strong solution $(Y,Z)$, and in this case the gradient of the solution corresponds to $Z$.

{
\begin{theorem}\label{theor: Feynman-Kac construct virtual solution}
	Let Assumption \ref{ass: f in Rd} hold.	Let $u$ be the unique mild solution of (\ref{eq: PDE backward Rd}) and $X$ be the solution of the forward equation in (\ref{eq: FBSDE transformed}), namely $X_s = x+ W_s-W_t$, $s\in[t,T]$. Then the couple $(u(\cdot, {X}),\nabla u(\cdot, {X}))$ is a virtual-strong solution of the backward SDE in (\ref{eq: FBSDE transformed}). 
\end{theorem}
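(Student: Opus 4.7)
The plan is to prove the representation by a smooth approximation argument. Let $(b^n)_{n\in\N}$ be a sequence of smooth functions converging to $b$ in $L^{\infty}([0,T]; H_q^{-\beta})$, and let $u^n$, $w^n$ denote the corresponding classical solutions of the mollified PDE \eqref{eq: PDE backward for u^n} and the auxiliary PDE \eqref{eq: PDE aux n}, respectively. Because $b^n$ is smooth, the classical nonlinear Feynman--Kac theorem (Pardoux--Peng) ensures that $(u^n(\cdot, X), \nabla u^n(\cdot, X))$ is the unique strong solution of the smooth BSDE
\begin{equation*}
u^n(s, X_s) = \Phi(X_T) - \int_s^T \nabla u^n(r, X_r)\,\mathrm dW_r + \int_s^T f(r, X_r, u^n(r, X_r), \nabla u^n(r, X_r))\,\mathrm dr + \int_s^T b^n(r, X_r)\nabla u^n(r, X_r)\,\mathrm dr.
\end{equation*}

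Next, I would apply the classical It\^o formula to $w^n(\cdot, X)$ and use the equation $w^n_t + \tfrac12 \Delta w^n = b^n\,\nabla u^n$ together with the terminal value $w^n(T,\cdot) = 0$ to obtain
\begin{equation*}
-w^n(s, X_s) = \int_s^T b^n(r, X_r)\nabla u^n(r, X_r)\,\mathrm dr + \int_s^T \nabla w^n(r, X_r)\,\mathrm dW_r.
\end{equation*}
Substituting this identity into the smooth BSDE displayed above produces exactly equation \eqref{eq: BSDE virtual} with $(u^n, w^n)$ in place of $(u, w)$, for the couple $(u^n(\cdot, X), \nabla u^n(\cdot, X))$.

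The key step is then to pass to the limit $n \to \infty$. By Lemma \ref{lm: properties of solution u and u^n of PDE}(ii) and Lemma \ref{lm: properties of wn}, $u^n \to u$, $\nabla u^n \to \nabla u$, $w^n \to w$ and $\nabla w^n \to \nabla w$ uniformly on $[0,T]\times \R^d$. Consequently $u^n(s, X_s) \to u(s, X_s)$ and $w^n(s, X_s) \to w(s, X_s)$ pathwise; the Lebesgue integral of $f$ converges pathwise via the uniform Lipschitz property of $f$ (Assumption \ref{ass: f in Rd}) and bounded convergence; and the two stochastic integrals converge in $L^2(\Omega)$ by It\^o's isometry combined with the uniform $L^\infty$ bounds \eqref{eq: sup bound for nu_x} and \eqref{eq: sup bound nabla w}. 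Extracting a subsequence along which the stochastic integrals converge almost surely, we obtain \eqref{eq: BSDE virtual} $\PP$-a.s.\ for the pair $(Y,Z) = (u(\cdot, X), \nabla u(\cdot, X))$ at each fixed $s \in [t,T]$, and the path-continuity of both sides promotes this to an identity holding simultaneously for every $s\in[t,T]$.

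Finally, I would check the admissibility conditions of Definition \ref{def: virtual solution BSDE}: the fractional Morrey inequality together with the time H\"older regularity in Lemma \ref{lm: properties of solution u and u^n of PDE} makes $s\mapsto u(s, X_s)$ continuous and $\mathbb F$-adapted and $s\mapsto \nabla u(s, X_s)$ continuous and progressively measurable, while the uniform bounds \eqref{eq: sup bound for nu}--\eqref{eq: sup bound for nu_x} yield $\EE[\sup_s |u(s,X_s)|^2]<\infty$ and $\EE[\int_t^T |\nabla u(r,X_r)|^2\,\mathrm dr]<\infty$. The main technical obstacle is the passage to the limit in the stochastic integrals; it is precisely the uniform $L^\infty$ control on $\nabla u^n$ and $\nabla w^n$ (inherited from the uniform Morrey-type bounds), rather than any weak or $L^2$ convergence at the level of the coefficients $b^n$, that makes It\^o's isometry applicable here.
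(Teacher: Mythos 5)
Your proof is correct, and it takes a genuinely different and more elementary route than the paper's. The paper reduces the limit passage to a tightness argument for the seven-component vector $(Y^n, Z^n, T^n, S^n, F^n, M^n, N^n)$ in $C([0,T];\R^{d'})$, invokes Skorohod's representation theorem to transplant the problem to a new probability space where one has almost-sure convergence, and then identifies the stochastic-integral components $M, N$ via the Kurtz--Protter theorem on convergence of stochastic integrals. You instead exploit the structural feature that distinguishes Theorem \ref{theor: Feynman-Kac construct virtual solution} from its analogue Theorem \ref{thm: Feynman-Kac formula} for system \eqref{eq: forward-backward SDE}: here the forward process $X$ is the \emph{same} Brownian motion for every $n$, so all the approximating pairs $(u^n(\cdot,X), \nabla u^n(\cdot,X))$ live on one probability space, and the uniform convergence $\nabla u^n \to \nabla u$, $\nabla w^n \to \nabla w$ on $[0,T]\times\R^d$ (Lemmas \ref{lm: continuity of u wrt approximation}(ii) and \ref{lm: properties of wn}) feeds directly into It\^o's isometry to give $L^2(\Omega)$ convergence of the stochastic integrals, with no change of space or weak-convergence machinery. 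This is cleaner and makes visible exactly where the hypotheses enter. The paper presumably runs the heavier tightness argument here so that it can reuse the same scaffolding in Theorem \ref{thm: Feynman-Kac formula}, where $X^n$ genuinely varies with $n$ and a pathwise argument is no longer available.

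One small point on phrasing: in your concluding paragraph you attribute the applicability of It\^o's isometry to the ``uniform $L^\infty$ control on $\nabla u^n$ and $\nabla w^n$.'' The mere uniform bounds would not by themselves yield convergence of the stochastic integrals; what does the work is the \emph{uniform convergence} $\|\nabla u^n - \nabla u\|_{L^\infty([0,T]\times\R^d)} \to 0$ (and likewise for $w^n$), from which $\EE\int_s^T |\nabla u^n(r,X_r)-\nabla u(r,X_r)|^2\,\mathrm dr \le T\,\|\nabla u^n-\nabla u\|_\infty^2 \to 0$. You do cite the correct lemmas giving this uniform convergence earlier in the argument, so this is a matter of emphasis rather than a gap, but the bound alone is not the decisive ingredient.
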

\begin{proof}
  First we note that by Remark \ref{rm: link between Assumptions} we can consider the composition of  $f$ with $u, \nabla u$ and this satisfies Assumption \ref{ass: f in sobolev space}.  Hence by Theorem \ref{thm: PDE backward} we know that a  solution $u$ to PDE \eqref{eq: PDE backward Rd} exists and it is unique. Furthermore this solution is in $C([0,T];C^{1,\alpha})$ for some small $\alpha>0$ by Lemma \ref{lm: properties of solution u and u^n of PDE} and it is uniformly bounded in $(t,x)$. These properties, together with the fact that $X$ is a Brownian motion starting in $x$ at time $t$, imply that the first two bullet points of Definition  \ref{def: virtual solution BSDE} are easily satisfied for the couple   $(u(\cdot, {X}),\nabla u(\cdot, {X}))$. The only non-trivial point to verify in this definition is to show that  $(u(\cdot, {X}),\nabla u(\cdot, {X}))$ satisfies \eqref{eq: BSDE virtual}, where $w$ is given by \eqref{eq: PDE aux}.
  
 To show this  we take a  smooth approximating sequence,  e.g.\ $b^n \in L^\infty(0,T; C^1_b(\R^d;\R^d))$, such that  $b^n\to b$  converges in $L^\infty(0,T; L^{-\beta}_{q})$. The PDE (\ref{eq: PDE backward Rd}) then  becomes
	\eqref{eq: PDE backward for u^n} and PDE (\ref{eq: PDE aux})  becomes	\eqref{eq: PDE aux n}.
	These approximations are smooth so we can apply It\^o's formula to both $u^n(\cdot, {X})$ and $w^n(\cdot, {X} )$, and get
	\begin{equation*}
		\begin{aligned}
			\d u^n(s, {X}_s)=&-{ \nabla u^n(s, {X}_s) b^n(s, {X}_s)}\d s\\
			&-f(s, {X}_s,u^n(s, {X}_s),\nabla u^n(s, {X}_s))\d s+\nabla u^n(s, {X}_s)\d  {W}_s	,
		\end{aligned}
	\end{equation*}
	and
	\begin{equation*}
		\d w^n(s, {X}_s)={  \nabla u^n(s, {X}_s)b^n(s, {X}_s)}\d s+\nabla w^n(s, {X}_s)\d  {W}_s.	
	\end{equation*}
	Adding the second equation to the first we get rid of the term with ${\nabla u^nb^n}$ and we end up with
	\begin{equation*}
		\begin{aligned}
			\d u^n(s, {X}_s)= & -\d w^n(s, {X}_s)-f(s, {X}_s,u^n(s, {X}_s),\nabla u^n(s, {X}_s))\d s\\
			&+\nabla w^n(s, {X}_s)\d  {W}_s+\nabla u^n(s, {X}_s)\d  {W}_s	.
		\end{aligned}
	\end{equation*}
	Integrating from $s$ to $T$  gives
	\begin{equation}\label{eq: Feynman-Kac approx}
		\begin{aligned} 
			u^n(s, {X}_s)=\ &\Phi( {X}_T)- w^n(s, {X}_s)\\
			&+\int^T_sf(r, {X}_r,u^n(r, {X}_r),\nabla u^n(r, {X}_r))\d r\\
			&-\int^T_s\nabla w^n(r, {X}_r)\d  {W}_r-\int^T_s\nabla u^n(r, {X}_r)\d  {W}_r.	
		\end{aligned}
	\end{equation}	
	
Our aim to show that the limit of \eqref{eq: Feynman-Kac approx} is given by 
	\begin{equation}\label{eq: Feynman-Kac}
		\begin{aligned} 
			u(s, {X}_s)=\ &\Phi( {X}_T)- w(s, {X}_s)\\
			&+\int^T_sf(r, {X}_r,u(r, {X}_r),\nabla u(r, {X}_r))\d r\\
			&-\int^T_s\nabla w(r, {X}_r)\d  {W}_r-\int^T_s\nabla u(r, {X}_r)\d  {W}_r.	
		\end{aligned}
	\end{equation}	
We will consider the limit in $\mathbb S^2$: For a stochastic process $(\xi_s)_{t\leq s \leq T}$ the norm in $\mathbb S^2$ is given by $\mathbb E [\sup_{t\leq s\leq T} |\xi_s|^2]$. We take the difference of \eqref{eq: Feynman-Kac approx} and 	\eqref{eq: Feynman-Kac}, then by triangular inequality is enough to show $\mathbb S^2$-convergence to zero for each of the following five  terms:
\begin{align*}
&u^n(\cdot, X) - u(\cdot, X)\\
&w^n(\cdot, X) - w(\cdot, X)\\
&\int_\cdot^T  f(r, X_r, u^n(r, X_r), \nabla u^n(r, X_r)  \mathrm dr  \\
&\phantom{some space} - \int_\cdot^T f(r, X_r, u(r, X_r), \nabla u(r, X_r) ) \mathrm dr \nonumber \\
&\int_\cdot^T  \nabla u^n(r, X_r)  \mathrm dW_r - \int_\cdot^T \nabla u(r, X_r)   \mathrm dW_r \\
&\int_\cdot^T  \nabla w^n(r, X_r)  \mathrm dW_r  - \int_\cdot^T \nabla w(r, X_r)   \mathrm dW_r .
\end{align*}
The first two are a consequence of uniform convergence of $u^n$ to $u$ and $w^n$ to $w$ (which is proven in Lemma \ref{lm: continuity of u wrt approximation} and \ref{lm: properties of wn}). The third term converges to zero thanks to the Lipschitz continuity of $f$ (by Assumption \ref{ass: f in Rd}) and  uniform convergence of $u^n$ and $\nabla u^n$ (again by Lemma \ref{lm: continuity of u wrt approximation}). The last two terms can be bounded using BDG inequality and Lemma  \ref{lm: properties of wn})  as follows (we show it only for $w$, the same applies to  $u$ thanks to Lemma  \ref{lm: continuity of u wrt approximation}.
\begin{align*}
\mathbb E &\left [\sup_{t\leq s\leq T} \left|\int_s^T  (\nabla w^n(r, X_r) - \nabla w(r, X_r)    )   \mathrm dW_r   \right|^2\right ]\\
&\leq c \mathbb E \left [\int_s^T  (\nabla w^n(r, X_r) - \nabla w(r, X_r)    )^2   \mathrm dr  \right ]\\
&\leq c \mathbb E \left [ \int_s^T  \left(  \sup_{r,x}|\nabla w^n(r, X_r) - \nabla w(r, X_r) | \right  )^2   \mathrm dr   \right ] \to 0.
\end{align*}
This concludes the proof. 
\end{proof}
}

From  Theorem \ref{theor: Feynman-Kac construct virtual solution} and using Proposition \ref{prop: equivalence of backward SDEs}, it is also easily seen that $(u(\cdot, {X})+w(\cdot, {X})),\nabla u(\cdot, {X}) +\nabla w(\cdot, {X}))$ is a strong solution of (\ref{eq: BSDE virtual equivalent}), where $u $ is the solution of PDE \eqref{eq: PDE backward Rd} and $w$ is the solution of \eqref{eq: PDE aux}.

Next we have the opposite result, namely that the BSDE provides a representation for the mild solution of the PDE. For this result we resume the use of the superscript $t,x$ for better clarity.

\begin{theorem}\label{theor: Feynman-Kac construct mild solution}
	Let Assumption \ref{ass: f in sobolev space} hold, and let $( {Y}^{t,x}, {Z}^{t,x})$ be a virtual-strong solution of the backward SDE in  (\ref{eq: FBSDE transformed}). Assume further that there exists deterministic functions $\alpha(\cdot, \cdot) $ and $\beta(\cdot, \cdot) $ such that 
	\[
	Y_s^{t,x} =  \alpha(s, X_s^{t,x}) \quad \text{and} \quad Z_s^{t,x} =  \beta(s, X_s^{t,x})
	\]
	for all $s\in[0,T]$. Moreover assume that $\alpha\in C^\varepsilon([0,T]; H^{1+\delta}_p)$ (form some $\varepsilon>0$) and $\beta\in C([0,T]; H^{\delta}_p)$. Then the unique mild solution of (\ref{eq: PDE backward Rd}) can be written as $u(t,x)=  {Y}_t^{t,x}$. Moreover we have that $\nabla u(t,x)=   {Z}_t^{t,x}$.
\end{theorem}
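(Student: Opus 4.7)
The strategy is to reduce the problem to the auxiliary BSDE \eqref{eq: BSDE virtual equivalent} (obtained by the Zvonkin-type transformation of Proposition~\ref{prop: equivalence of backward SDEs}), identify the $Z$-process in that equation as the spatial gradient of the $Y$-process via a generalised It\^o formula, and then invoke uniqueness of mild solutions of the regularised semilinear PDE that is left after the singular drift has been absorbed.

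First I would set $\hat{Y}_s := Y_s^{t,x} + w(s, X_s^{t,x})$ and $\hat{Z}_s := Z_s^{t,x} + \nabla w(s, X_s^{t,x})$, and introduce $\hat{\alpha} := \alpha + w$, $\hat{\beta} := \beta + \nabla w$ so that $\hat{Y}_s = \hat{\alpha}(s, X_s^{t,x})$ and $\hat{Z}_s = \hat{\beta}(s, X_s^{t,x})$. By Proposition~\ref{prop: equivalence of backward SDEs}, $(\hat{Y}, \hat{Z})$ is a strong solution of \eqref{eq: BSDE virtual equivalent}, and Lemma~\ref{lm: properties of w and nabla w} ensures $\hat{\alpha} \in C^\varepsilon([0,T]; H^{1+\delta}_p)$ and $\hat{\beta} \in C([0,T]; H^{\delta}_p)$. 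Taking conditional expectation of \eqref{eq: BSDE virtual equivalent} given $\mathcal{F}_s$ and using the Markov property of the forward Brownian motion $X$ with heat semigroup $P_p$, I would obtain
\[
\hat{\alpha}(s,x) = (P_p(T-s)\Phi)(x) + \int_s^T \bigl(P_p(r-s)\,\hat{f}(r, \hat{\alpha}(r), \hat{\beta}(r))\bigr)(x)\, dr,
\]
a mild integral equation whose driver still involves the a priori unknown $\hat{\beta}$.

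The crucial step is to establish the identification $\hat{\beta} = \nabla\hat{\alpha}$. I would do this by mollifying $\hat{\alpha}$ spatially, $\hat{\alpha}^n := \rho_n \ast \hat{\alpha}$, applying the classical It\^o formula to the smooth process $\hat{\alpha}^n(\cdot, X_\cdot)$ using the mollified version of the above mild equation, and passing to the limit: the uniform convergence $\hat{\alpha}^n \to \hat{\alpha}$ and $\nabla\hat{\alpha}^n \to \nabla\hat{\alpha}$ (via the fractional Morrey embedding $H^{1+\delta}_p \hookrightarrow C^{1,\alpha}$ applied to the mollification) together with the $H^0_p$-bound on the driver lets me recover a generalised It\^o expansion for $\hat{\alpha}(\cdot, X_\cdot)$. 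Comparing this expansion with the It\^o decomposition of $\hat{Y} = \hat{\alpha}(\cdot, X)$ read directly from \eqref{eq: BSDE virtual equivalent} and invoking uniqueness of the semimartingale decomposition then forces $\hat{\beta}(r, X_r) = \nabla\hat{\alpha}(r, X_r)$ for a.e.\ $r$, $\mathbb{P}$-a.s.; by continuity this yields $\hat{\beta} = \nabla\hat{\alpha}$ everywhere on $[0,T]\times \mathbb{R}^d$. This mollification and limiting argument is the main technical obstacle, since $\hat{\alpha}$ is only Sobolev-regular in $x$ and H\"older in $t$, so classical It\^o is not available.

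With $\hat{\beta} = \nabla\hat{\alpha}$ in hand, the mild equation for $\hat{\alpha}$ becomes the semilinear heat equation
\[
\hat{\alpha}(t) = P_p(T-t)\Phi + \int_t^T P_p(r-t)\,\hat{f}(r, \hat{\alpha}(r), \nabla\hat{\alpha}(r))\, dr,
\]
which admits a unique solution in $C([0,T]; H^{1+\delta}_p)$ by a Banach fixed point argument along the lines of Theorem~\ref{thm: PDE backward} (in fact simpler, since no singular $b\,\nabla u$ term remains; the Sobolev-Lipschitz property of $\hat{f}$ is inherited from that of $f$ using the uniform bounds on $w, \nabla w$ from Lemma~\ref{lm: properties of w and nabla w}). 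On the other hand, combining \eqref{eq: mild solution PDE} with \eqref{eq: mild solution w}, the two $P_p(r-\cdot)(b(r)\nabla u(r))$ contributions match up so as to eliminate the singular term, and since $\hat{f}(r, u + w, \nabla u + \nabla w) = f(r, u, \nabla u)$ by definition of $\hat{f}$, the function $\hat{u} := u + w$ satisfies exactly the same mild equation as $\hat{\alpha}$. Uniqueness then yields $\hat{\alpha} = \hat{u}$, hence $\alpha = u$ and $\beta = \nabla u$; evaluating at $s = t$, where $X_t^{t,x} = x$ is deterministic, gives $Y_t^{t,x} = u(t,x)$ and $Z_t^{t,x} = \nabla u(t,x)$, completing the proof.
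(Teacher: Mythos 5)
Your proposal is correct in its overall logic and reaches the same conclusion, but it takes a genuinely different route from the paper at two key points, which is worth spelling out.

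First, you transform to the auxiliary BSDE \eqref{eq: BSDE virtual equivalent} via Proposition~\ref{prop: equivalence of backward SDEs} and work with $\hat\alpha=\alpha+w$, $\hat\beta=\beta+\nabla w$ throughout, so that the final mild equation for $\hat\alpha$ carries no singular term at all; the singular contributions cancel exactly in $\hat u=u+w$ by \eqref{eq: mild solution PDE} and \eqref{eq: mild solution w}, and you close with a Banach fixed point in $C([0,T];H^{1+\delta}_p)$ for an ordinary semilinear heat equation driven by $\hat f$. The paper, by contrast, stays in the original variables $\alpha,\beta$: it takes the (unconditional) expectation of \eqref{eq: BSDE virtual}, substitutes the mild formula for $w$ back in, and recognises the result as a mild formulation of \eqref{eq: mild form for alpha}, a linear-in-$b\nabla u$ equation in which $u$ enters as a known datum; uniqueness for that equation is then invoked as in Theorem~\ref{thm: PDE backward}. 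Both ways lead to $\alpha=u$, but your version leaves a cleaner fixed-point equation (no $b$ left), while the paper's version requires no additional transformation beyond what is already in the proof.

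Second, and more substantively, the mechanism for the identification $\hat\beta=\nabla\hat\alpha$ (or $\beta=\nabla\alpha$) differs. You propose a spatial mollification $\hat\alpha^n=\rho_n*\hat\alpha$, application of classical It\^o to $\hat\alpha^n(\cdot,X)$ via the mollified mild equation, passage to the limit using the Morrey embedding, and then uniqueness of the semimartingale decomposition to match martingale parts. This can be made to work (the mollifier commutes with the heat semigroup, the time-regularity needed for It\^o is supplied by the mild formula, and the uniform convergence of $\hat\alpha^n$ and $\nabla\hat\alpha^n$ follows from $C^{1,\alpha}$-embedding), but you flag it yourself as the main technical obstacle, and it is: it amounts to re-proving a regularised It\^o formula from scratch. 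The paper sidesteps all of this by invoking the Russo--Vallois/Gozzi--Russo stochastic covariation: under the regularity you are already assuming ($\alpha\in C^\varepsilon([0,T];H^{1+\delta}_p)$, hence $\alpha\in C^{0,1}([0,T]\times\R^d)$ by Morrey and \cite[Lemma~21]{flandoli_et.al14}), \cite[Corollary~3.13]{gozzi_russo06} gives immediately $[Y,W]_s=\int_0^s\nabla\alpha(r,X_r)\,\mathrm dr$, while computing $[Y,W]$ term by term from the BSDE yields $\int_0^s\beta(r,X_r)\,\mathrm dr$ (the two $\nabla w$ contributions cancel). This is a ready-made tool that does exactly the work your mollification is aimed at, at lower technical cost. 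So the two identification arguments require the same regularity, but the covariation route is the shorter rigorous path; your route is more elementary in spirit but would need a careful write-up of the mollification and limit if this were to be a complete proof rather than a plan.

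One small point to be careful about if you write this up: the displayed mild formula \eqref{eq: mild solution w} for $w$ as printed has a sign that is inconsistent with \eqref{eq: PDE aux} (the correct variation-of-constants formula for $w_t+\tfrac12\Delta w=b\nabla u$, $w(T)=0$, is $w(t)=-\int_t^TP_p(r-t)b(r)\nabla u(r)\,\mathrm dr$, which is what the paper in fact uses inside its own proof). Your claim that the $P_p(r-\cdot)(b(r)\nabla u(r))$ contributions cancel in $u+w$ is correct with the PDE-consistent sign.
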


\begin{proof}
	Since $( {Y}^{t,x}, {Z}^{t,x})= (\alpha(\cdot, X^{t,x}), \beta(\cdot, X^{t,x}))$ is a virtual-strong solution of the backward SDE in  \eqref{eq: FBSDE transformed}, we have  for $s=t$
	\begin{align}\label{eq: Y for mild solution}
		\alpha(t,x)=\ &\Phi ( {X}_T^{t,x})-\int^T_t \beta(r, X_r^{t,x}) \mathrm d  {W}_r\\\nonumber
		&+\int^T_t f(r, {X}_r^{t,x}, \alpha(r, X_r^{t,x}),\beta(r, X_r^{t,x}))\mathrm d r\\\nonumber
		&-w(t,x)-\int^T_t\nabla w(r, {X}_r^{t,x})\mathrm d  {W}_r.
	\end{align}
	Note that  the stochastic integrals in \eqref{eq: Y for mild solution} have zero-mean because both integrands are square integrable.
	We denote by $\PP_{t,x}$	the probability measure  of $X^{t,x}$ (which we recall is a Brownian motion starting in $x$  at $t$) 	and by $\EE_{t,x}$ the expectation under this measure, namely $\EE[X^{t,x}_s] = \EE_{t,x}[X_s]$, where $X_s$ is the canonical process. Moreover,  this process $X$ generates the heat semigroup under this measures, namely for all bounded and measurable $a$ we have   
	$$\EE\left [a(s,X^{t,x}_s)\right] = \EE_{t,x}\left[a(s,X_s)\right] = \left(P(s-t) a(s,\cdot)\right) (x).$$ 
	The heat semigroup $P$ coincides with the semigroup $P_p$ when it acts on elements in $L^p$.  
	Then   taking the expectation $\EE$ on both sides of \eqref{eq: Y for mild solution}  we get
	\begin{align} \nonumber
		\alpha(t,\cdot)= &{{\EE}}\left[\Phi ( {X}_T^{t,\cdot})\right ]-w(t,\cdot)\\ \nonumber
		&+ { {\EE}} \left[\int^T_tf(r, {X}_r^{t,\cdot},  \alpha(r, X_r^{t,\cdot}),\beta(r, X_r^{t,\cdot}) )\mathrm d r \right]\\ \nonumber
		=& P_p({T-t})\Phi -w(t )+\int^T_t P_p({r-t})f(r,\cdot,\alpha(r),\beta(r))\mathrm d r\\  \label{eq: mild form for alpha and beta}
		=& P_p({T-t})\Phi + \int^T_tP_p({r-t})\left({ \nabla u(r)\, b(r)}\right ) \mathrm d r \\ \nonumber
		&+\int^T_t P_p({r-t})f(r,\cdot,\alpha(r),\beta(r))\mathrm d r,
	\end{align}
	having used in the last equality that   $w$ is the mild solution of (\ref{eq: PDE aux}).
	Next we calculate the  covariation of $Y$ and $W$. We use the covariation defined in \cite{gozzi_russo06}, recalled below for convenience:
\[
\left[  Y, W \right]_s	:=\lim_{\varepsilon \to 0 } \frac{1}{\varepsilon} \int_0^s (Y_{r+\varepsilon} -Y_r)(W_{r+\varepsilon} -W_r) \mathrm dr,
\]
if the limit exists \emph{u.c.p.}\ in $s$.	
Notice that $\alpha \in C^\varepsilon([0,T]; H_p^{1+\delta})$ implies by fractional Morrey inequality (Lemma \ref{lm: fractional Morrey ineq}) that $\alpha$ is continuous in time and $C^{1, \gamma}$  in space with $\gamma = \delta -\frac d p $. Moreover one can show that  $\alpha \in C^{0,1}([0,T]\times \mathbb R^d)$  by similar computations as \cite[Lemma 21]{flandoli_et.al14},  thus we can apply  \cite[Corollary 3.13]{gozzi_russo06} and get
	\begin{align*}
	\left[ 	Y, W \right]_s &= \left[  \alpha(\cdot, X^{t,x} ), W \right]_s 
		= \int_0^s \nabla \alpha (r, X_r^{t,x}) \mathrm dr.
	\end{align*}
	On the other hand, the  covariation calculated using the BSDE \eqref{eq: Y for mild solution} gives 
	\begin{align*}
		&\left[  Y, W \right]_s \\
=& \left[  \Phi ( {X}_T^{t,x})-\int^T_\cdot {Z}_r^{t,x}\mathrm d  {W}_r+\int^T_\cdot f(r, {X}_r^{t,x}, \alpha(r, X_r^{t,x}),\beta(r, X_r^{t,x}))\mathrm d r , W \right]_s \\
		&+ \left[  -w(\cdot, X^{t,x})-\int^T_\cdot\nabla w(r, {X}_r^{t,x})\mathrm d  {W}_r , W \right]_s\\
		=& -  \left[  \int_\cdot^T  Z_r^{t,x} \mathrm d W_r, W \right]_s - \left[  w(\cdot, X^{t,x}), W \right]_s\\
		&- \left[  \int^T_\cdot\nabla w(r, {X}_r^{t,x})\mathrm d  {W}_r  , W \right]_s\\
		=& \int_0^s Z_r^{t,x} \mathrm dr - \int_0^s \nabla w(r, {X}_r^{t,x})\mathrm d  r + \int_0^s \nabla w(r, {X}_r^{t,x})\mathrm d  r \\
		=& \int_0^s  \beta(r, X_r^{t,x} ) \mathrm dr .
	\end{align*}
Therefore $\beta(s, X_s^{t,x}) =  \nabla \alpha(s,X_s^{t,x} )$ for all $s$. Equation \eqref{eq: mild form for alpha and beta} becomes
	\begin{align*}
		\alpha(t)=& P_p({T-t})\Phi + \int^T_tP_p({r-t})\left({ \nabla u(r)\,b(r)}\right ) \mathrm d r \\
		&+\int^T_t P_p({r-t})f(r,\alpha(r),\nabla\alpha(r))\mathrm d r.
	\end{align*}
	We remark that this is exactly the mild formulation of 
	\begin{align}\label{eq: mild form for alpha}
		\left\{\begin{array}{ll}
			&\alpha_t (t,x)  +\frac12 \Delta \alpha (t,x) + {\nabla u(t,x)b (t,x)} + f(t,\alpha(t,x), \nabla \alpha(t,x))=0,\\
			& \alpha(T,x) = \Phi(x),\\
			& \forall (t,x)\in[0,T]\times \mathbb R^d,
		\end{array} \right.
	\end{align}
	where $u$ is the mild solution of \eqref{eq: PDE backward Rd}.  With a very similar proof of Theorem \ref{thm: PDE backward} one can show that there exists a unique mild solution $\alpha \in C([0,T]; H^{1+\delta}_p)$ to \eqref{eq: mild form for alpha}. But by Theorem \ref{thm: PDE backward} we also know that $u$ is  a solution of  \eqref{eq: mild form for alpha} hence we have $\alpha= u$. The claims ${Y}_t^{t,x}= u(t,x) $ and  $ {Z}_t^{t,x}= \nabla u(t,x) $ are thus proved.
\end{proof}

\section{Solution of   FBSDE \eqref{eq: forward-backward SDE}}\label{sc: forward-backward SDE}
\subsection{Some heuristic comments}\label{ssc: heuristc comments}
In this last section we study the forward-backward system (\ref{eq: forward-backward SDE}) recalled again below for ease of reading:
 \begin{equation} \label{eq: FBSDE}
	\left\{
\begin{array}{l}	
 X_s^{t,x} =\  x +  \int_t^s b(r,X_r^{t,x}) \mathrm dr  +\int_t^s \mathrm dW_r,\\ 
	 Y_s^{t,x} = \Phi( X_T^{t,x})  - \int_s^T Z_r^{t,x} \mathrm dW_r + \int_s^T f(r,X_r^{t,x}, Y_r^{t,x},Z_r^{t,x}) \mathrm dr,\\
	 \forall   s\in[t,T].
	 \end{array}\right.
\end{equation}
We will go into more technical  details in Section \ref{subsc: forward SDE} and below, but first we want to make some heuristic comments on the link between the system above and the other FBSDE, given by \eqref{eq: forward-backward SDE transformed}.\vspace{8pt}\\  
If we were in the classical (and smooth enough) case where $b$ is a suitable function, we would be able to change measure  in \eqref{eq: FBSDE}    and apply  Girsanov's theorem: We could  find a new measure $\widetilde{\PP}$ defined by $\mathrm d \widetilde \PP := M_T \mathrm d \PP$ under which   $\widetilde{W}_s := W_s + \int_0^s b(r, X_r^{t,x}) \mathrm d r$ is a Brownian motion. Here $M_s :=  \exp (-\int_0^s b(r, X_r) \mathrm d W_r  - \frac12 \int_0^s b^2(r, X_r) \mathrm d r  )$ is a martingale. Under the new measure $ \widetilde{ {\PP}}$,  the system \eqref{eq: FBSDE} would read
\begin{equation}\label{eq: FBSDE2}
\left\{
\begin{array}{l}
\widetilde{X}_s^{t,x} = x  + \widetilde{W}_s- \widetilde{W}_t,\\
\widetilde{Y}_s^{t,x} = \Phi( \widetilde{X}_T^{t,x})  - \int_s^T  \widetilde{Z}_r^{t,x} \mathrm d \widetilde{W}_r + \int_s^T f(r, \widetilde{X}_r^{t,x},  \widetilde{Y}_r^{t,x},\widetilde Z_r^{t,x}) \mathrm dr\\
\qquad +\int^T_s{ \widetilde{Z}_r^{t,x}b(r, \widetilde{X}_r^{t,x}) }\mathrm{d} r,\\
{\forall }  s\in[t,T], 
\end{array}\right.
\end{equation}
which is exactly equation \eqref{eq: forward-backward SDE transformed} mentioned above.  
In both cases the associated PDE would be the same, namely \eqref{eq: PDE backward intro}, recalled below
\begin{equation} \label{eq: PDE3}
\left\{\begin{array}{l}
 u_t(t,x) +  L^b u(t,x) + f(t,x,u(t,x), \nabla u(t,x))=0, \\
 u(T,x)=\Phi(x),\\
 \forall  (t,x)\in[0,T]\times \mathbb R^d.
\end{array}
\right.
\end{equation}
This can be easily checked by applying It\^o's formula to $u(s, X_s^{t,x})$  (respectively $u(s,\widetilde  X_s^{t,x})$), and identifying $Y$ and $Z$ (respectively $\widetilde Y$ and $\widetilde Z$) with $u$ and $\nabla u$ calculated in $X$ (respectively $\widetilde X$). 

The fact that the same PDE leads to two different FBSDEs can be interpreted analytically by looking at the PDE from two different viewpoints. On one hand we can look at the PDE and the semigroup generated by the Laplacian ($\frac12 \Delta$), which is also the generator of the forward component. In this case the process generated  is a Brownian motion (which is  $X$), so one gets to \eqref{eq: FBSDE2}. Alternatively, we can look at the semigroup generated by the Laplacian \emph{and} the term involving $b$ (that is $L^b=\frac12 \Delta+ (\nabla\cdot) \, b$), which is again the generator of the forward component, but in this case this process is a Brownian motion with drift,   more specifically it is the solution of  $\widetilde X_s = x+\int_t^s b(r,\widetilde X_r) \mathrm dr + \int_t^s \mathrm d \widetilde W_r$. This second viewpoint leads to \eqref{eq: FBSDE}.

Clearly when the drift $b$ is a distribution, this argument is no longer rigorous: We are not able to justify the change of measure (which would involve two measures which are not equivalent). 
 From the analytical point of view, it is unclear to us how to characterize the ``semigroup'' generated by $L^b$. We do not have answers to those questions yet.  
\vspace{7pt}

 What we achieve here instead, is an independent study of the system \eqref{eq: FBSDE}. We will define what a solution is, show its existence (but not uniqueness) and prove rigorously the link between the system \eqref{eq: FBSDE} and the   PDE \eqref{eq: PDE3}.

\subsection{The forward component $\bm X$}\label{subsc: forward SDE}
It is easy to see that the  forward-backward system  \eqref{eq: FBSDE} can be decoupled and the forward component solved first.  
We  define  a solution of \eqref{eq: FBSDE} using  both classical literature about weak solutions of  FBSDEs (see for example \cite{Buckdahn_et_al04, delarue-guatteri06, ma_et.al08}) and the notion of virtual solution for an SDE with distributional drift from \cite{flandoli_et.al14}.
 Here the authors introduced and studied (in the special case where $t=0$) 
equations in $\R^d$ of the form 
\begin{equation}\label{eq: forward SDE}
X_s^{t,x} =\  x +  \int_t^s b(r,X_r^{t,x}) \mathrm dr  +\int_t^s \mathrm dW_r, \qquad s\in[t,T]
\end{equation}
 with drift $b$ being a distribution as specified in the standing assumption, with the extra $L^q$-condition that $b\in L^{\infty}([0,T]; H_q^{-\beta}\cap H_{\tilde q}^{-\beta})$, where $q$ is as usual and $\tilde q : = \frac{d}{1-\beta}$.  In this Section we recall some of their results   for the reader's convenience. Notice that Lemma \ref{lm: properties of psi_n and V^n} is a new result.

 To define a virtual solution we  need to consider the following  auxiliary PDE
\begin{equation}\label{eq: PDE for xi}
\left\{ \begin{array}{l}
	\xi_s(s,y) + L^b \xi(s,y) -(\lambda+1) \xi(s,y)=-b(s,y),  \\
	 \xi(T,y)=0, \\
	  \forall (s,y)\in[0, T ]\times\mathbb R^d .
\end{array} \right.
\end{equation}
  This PDE is similar to \eqref{eq: PDE backward Rd} and can be treated with similar techniques. In \cite[Theorem 14]{flandoli_et.al14} the authors show that the  PDE \eqref{eq: PDE for xi} admits a unique mild solution in $C([0,T], H^{1+\delta}_p)$. This solution enjoys several smoothness properties and in particular it has a continuous version that can be evaluated pointwise and that will be used in the definition of virtual solution and in the construction of the auxiliary SDE below.
  
  By \emph{standard set-up} we mean a quintuple $(\Omega, \mathcal F, P, \mathbb F,(W_t)_t)$ where $(\Omega, \mathcal F,  P)$ is a complete probability space, $\mathbb F$ is a filtration satisfying the usual hypotheses and $W=(W_t)_t$ is an $\mathbb F$-Brownian motion.
According to \cite{flandoli_et.al14} we give the following definition. 
\begin{definition} \label{def: virtual solution} 
\cite[Definition 25]{flandoli_et.al14}
 A  standard set-up $(\Omega, \mathcal F,  P,\mathbb F, (W_t)_t)$ and a continuous stochastic process $X:=(X^{t,x}_s)_s$  on it are said to be  a {\em virtual solution} of \eqref{eq: forward SDE} if $X$ is  $\mathbb F$-adapted and the integral equation
	
	\begin{align}
		\nonumber
		X^{t,x}_{s}=\ &x+\xi(t,x)- \xi(s,X^{t,x}_{s})+(\lambda+1)\int_{t}^{s}\xi(r,X^{t,x}_{r})\mathrm{d}%
		r\\ 
		&+\int_{t}^{s} { ( \nabla \xi(r,X^{t,x}_{r}) + \mathrm I_d)} \mathrm{d}W_{r},\label{eq: virtual solution}
	\end{align}
	
	holds for all $s\in[t,T]$,  $\PP$-a.s.
\end{definition}

To  construct  a virtual solution to \eqref{eq: forward SDE} we transform \eqref{eq: virtual solution}  using the  auxiliary PDE \eqref{eq: PDE for xi} and  we get an auxiliary  SDE  (see equation \eqref{eq: SDE for V} below) which we solve in the weak sense.
 Let us define $\varphi (s,y) :=y+\xi(s,y)$ and let  
\begin{equation}\label{eq: inverse of phi}
\psi(s, \cdot) := \varphi^{-1}(s, \cdot)
\end{equation}
 be the inverse of $y\mapsto \varphi (s,y)  $ for any fixed $s$, which is shown to exist and to be jointly continuous, see \cite[Lemma 22]{flandoli_et.al14}. Let $V$ be the weak solution of the following auxiliary SDE
\begin{align}\label{eq: SDE for V}
	V_s^{t,x} =&\ v + (\lambda+1) \int_t^s \xi(r,\psi(r, V_r^{t,x})) \mathrm dr \nonumber \\
	&+\int_t^s (\nabla \xi(r,\psi(r, V_r^{t,x})) + \mathrm{I}_d) \mathrm dW_r, 
\end{align}
for $s\in [t,T]$, where $\mathrm{I}_d$ is the $d\times d$ identity matrix and  $\xi$ is the solution of \eqref{eq: PDE for xi}. Equation \eqref{eq: SDE for V} is  exactly    \cite[equation (34)]{flandoli_et.al14}, where the authors show that a unique weak solution exists.  Then in  \cite[Theorem 28]{flandoli_et.al14} the authors show existence and uniqueness of a virtual solution according to Definition \ref{def: virtual solution} by making use of the weak solution of the SDE \eqref{eq: SDE for V} with initial condition   $v=\varphi(t,x)= x+\xi(t,x)$. This result is recalled in what follows.
\begin{proposition}\label{pr: existence of a virtual solution}
\cite[Theorem 28]{flandoli_et.al14}
Let	Assumption \ref{ass: f in Rd} hold and let $b \in L^\infty([0,T], H^{-\beta}_{ q}\cap H^{-\beta}_{\tilde q} )$ where $\tilde q : = \frac{d}{1-\beta}$.
Then for every $x\in\mathbb R^d$ and $0\leq t<T$, there exists a unique virtual solution of \eqref{eq: forward SDE} which has the form $X_s^{t,x}=\psi(s, V_s^{t,x})$, where $V$ is the unique weak solution of \eqref{eq: SDE for V} and $\psi$ is given by \eqref{eq: inverse of phi}.
\end{proposition}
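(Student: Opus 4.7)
The plan is to exploit the Zvonkin-type bijection $\varphi(s,y)=y+\xi(s,y)$ with inverse $\psi(s,\cdot)$ in order to set up a one-to-one correspondence between virtual solutions of \eqref{eq: forward SDE} and weak solutions of the auxiliary SDE \eqref{eq: SDE for V}. Once this correspondence is established, the conclusion follows from the (already available) well-posedness of \eqref{eq: SDE for V} in \cite[Theorem 28]{flandoli_et.al14}. A key point is that, thanks to the regularity of $\xi$ proved in \cite[Theorem 14 and Lemma 22]{flandoli_et.al14}, the maps $\varphi$ and $\psi$ are jointly continuous in $(s,y)$, and $\nabla\xi$, $\xi$ are bounded; hence all the integrands appearing in \eqref{eq: virtual solution} and \eqref{eq: SDE for V} are well-defined for any continuous adapted process.

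\textbf{Existence.} I would take a standard set-up $(\Omega,\mathcal F,P,\mathbb F,(W_t)_t)$ carrying the weak solution $V^{t,x}$ of \eqref{eq: SDE for V} with initial condition $v=\varphi(t,x)=x+\xi(t,x)$, whose existence is guaranteed by \cite[Theorem 28]{flandoli_et.al14}. I would then define $X_s^{t,x}:=\psi(s,V_s^{t,x})$, which is continuous and $\mathbb F$-adapted since $\psi$ is continuous and $V^{t,x}$ is. By the very definition of $\psi$ one has $V_s^{t,x}=\varphi(s,X_s^{t,x})=X_s^{t,x}+\xi(s,X_s^{t,x})$, so that $X_s^{t,x}=V_s^{t,x}-\xi(s,X_s^{t,x})$. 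Substituting $\psi(r,V_r^{t,x})=X_r^{t,x}$ in the equation for $V$ and rearranging gives
\begin{align*}
X_s^{t,x} &= v-\xi(s,X_s^{t,x})+(\lambda+1)\int_t^s\xi(r,X_r^{t,x})\,\mathrm d r\\
&\quad +\int_t^s\bigl(\nabla\xi(r,X_r^{t,x})+\mathrm I_d\bigr)\,\mathrm d W_r,
\end{align*}
which, since $v=x+\xi(t,x)$, is precisely \eqref{eq: virtual solution}. Hence $X^{t,x}$ is a virtual solution.

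\textbf{Uniqueness.} For uniqueness (in the weak sense appropriate to this framework) I would argue in the reverse direction. Let $X^{t,x}$ be any virtual solution on some standard set-up. Define $\tilde V_s:=\varphi(s,X_s^{t,x})=X_s^{t,x}+\xi(s,X_s^{t,x})$; adding $\xi(s,X_s^{t,x})$ to both sides of \eqref{eq: virtual solution} and using $X_r^{t,x}=\psi(r,\tilde V_r)$ yields
\begin{align*}
\tilde V_s &= \varphi(t,x)+(\lambda+1)\int_t^s\xi(r,\psi(r,\tilde V_r))\,\mathrm d r\\
&\quad +\int_t^s\bigl(\nabla\xi(r,\psi(r,\tilde V_r))+\mathrm I_d\bigr)\,\mathrm d W_r,
\end{align*}
so that $\tilde V$ is a weak solution of \eqref{eq: SDE for V} with the same initial condition $v=\varphi(t,x)$. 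By the weak uniqueness statement for \eqref{eq: SDE for V} recalled from \cite[Theorem 28]{flandoli_et.al14}, the law of $\tilde V$ is uniquely determined; since $\psi$ is a deterministic continuous function this transfers to a unique law for $X^{t,x}=\psi(\cdot,\tilde V)$, giving the claimed uniqueness.

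\textbf{Main obstacle.} The algebraic manipulations are essentially free once one has the transformation $\varphi$ at hand, so the real content is: (i) knowing that $\varphi(s,\cdot)$ is a homeomorphism with a jointly continuous inverse $\psi$, and (ii) knowing well-posedness of \eqref{eq: SDE for V}. Both of these are imported black boxes from \cite{flandoli_et.al14}; the only care needed here is to check measurability/adaptedness of the composed processes and the correct matching of the initial condition $v=\varphi(t,x)$, which makes the boundary term $x+\xi(t,x)$ in \eqref{eq: virtual solution} come out right.
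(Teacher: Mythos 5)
Your reconstruction is correct, and it follows the same approach as the cited source. Note however that the paper you are reading does not itself prove this proposition: it is explicitly imported as \cite[Theorem 28]{flandoli_et.al14} and stated without proof. The two sentences immediately preceding the proposition describe exactly the strategy you adopted — take the unique weak solution $V$ of the auxiliary SDE \eqref{eq: SDE for V} started at $v=\varphi(t,x)=x+\xi(t,x)$ and push it through the deterministic inverse homeomorphism $\psi$ — so your argument matches the intended one. Your verification of the algebra in both directions (that $X_s:=\psi(s,V_s^{t,x})$ satisfies \eqref{eq: virtual solution} and, conversely, that $\tilde V_s:=\varphi(s,X_s)$ satisfies \eqref{eq: SDE for V}) is correct, and you are also right to flag that the uniqueness obtained this way is uniqueness in law, which is the appropriate notion here since $V$ is only a weak solution.
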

Finally let us remark that, although  the transformation $\psi$ appearing in \eqref{eq: SDE for V}  involves a parameter $\lambda$ not included in the original SDE for $X$, the the virtual solution  does not  actually  depend
on $\lambda$. This is a consequence of \cite[Proposition 29]{flandoli_et.al14}.
\vspace{8pt}

The next results are important in the proof of Theorem \ref{thm: Feynman-Kac formula}  below, when we approximate the coefficient $b$ with a smooth sequence $b^n$.
Let us denote by $\psi_n, \varphi_n,\xi^n$ and $V^n$ the same objects as above  associated to equations \eqref{eq: SDE for V} and \eqref{eq: PDE for xi} but with $b$ replaced by a smooth sequence $b^n$. 
In this case it was shown in \cite[Lemma 23 and Lemma 24, (iii)]{flandoli_et.al14}  that the following property holds:

\begin{lemma}\label{lm: convergence of psi_n as b^n}
\cite[Lemma 23 and Lemma 24, (iii)]{flandoli_et.al14} \\
If $b^n\to b$  in $L^\infty([0,T], H^{-\beta}_{q}\cap H^{-\beta}_{\tilde q} )$, then $\xi^n\to \xi$  in $C([0,T], H^{1+\delta}_{p} )$. Moreover, $  \xi^n \to   \xi$ and $\nabla \xi^n \to \nabla \xi$ uniformly in $[0,T]\times \mathbb R^d$.
\end{lemma}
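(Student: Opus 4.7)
The strategy parallels the proof of Lemma \ref{lm: continuity of u wrt approximation}, adapted to the different PDE \eqref{eq: PDE for xi}. First I would write down the mild formulation of \eqref{eq: PDE for xi}: since $-A_p - (\lambda+1)I$ generates the semigroup $e^{-(\lambda+1)t}P_p(t)$, a mild solution takes the form
\begin{equation*}
\xi(t) = \int_t^T e^{-(\lambda+1)(r-t)} P_p(r-t)\bigl(b(r)\nabla \xi(r) + b(r)\bigr)\,\mathrm d r,
\end{equation*}
and analogously for $\xi^n$ with $b$ replaced by $b^n$. After the time reversal $\bar\xi(t):=\xi(T-t)$ (as in the proof of Theorem \ref{thm: PDE backward}) the equation becomes a forward integral equation to which the $\|\cdot\|_{\infty,H_p^{1+\delta}}^{(\rho)}$ technique applies.

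Next I would subtract the two mild formulations, add and subtract $b^n\nabla\xi$ to decompose
\begin{equation*}
b^n\nabla\xi^n - b\nabla\xi = b^n(\nabla\xi^n-\nabla\xi) + (b^n-b)\nabla\xi,
\end{equation*}
and estimate $\|\xi-\xi^n\|_{\infty,H_p^{1+\delta}}^{(\rho)}$ term by term. The pointwise-product bound (Lemma \ref{lm: pointwise product}) together with the mapping property \eqref{eq: mapping of Pt} yields, for the first piece,
\begin{equation*}
C\|b^n\|_{\infty,H_q^{-\beta}}\,\rho^{\frac{\delta+\beta-1}{2}}\,\|\xi-\xi^n\|_{\infty,H_p^{1+\delta}}^{(\rho)},
\end{equation*}
and for the second piece
\begin{equation*}
C\,\rho^{\frac{\delta+\beta-1}{2}}\,\|\xi\|_{\infty,H_p^{1+\delta}}\,\|b-b^n\|_{\infty,H_q^{-\beta}}.
\end{equation*}
The new inhomogeneous term $\int_\cdot^T e^{-(\lambda+1)(r-\cdot)}P_p(r-\cdot)(b^n(r)-b(r))\,\mathrm d r$ is controlled directly by the mapping property of $P_p$: the $\tilde q$ integrability (with $\tilde q=d/(1-\beta)$) gives $H_{\tilde q}^{-\beta}\hookrightarrow H_p^{-\beta}$ via Sobolev embedding, so this term is bounded by $C\|b-b^n\|_{\infty,H_{\tilde q}^{-\beta}}$. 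Choosing $\rho$ large enough so that $1-C\|b^n\|_{\infty,H_q^{-\beta}}\rho^{(\delta+\beta-1)/2}>\tfrac12$ (uniformly in $n$ since $\|b^n\|_{\infty,H_q^{-\beta}}$ is bounded by hypothesis), I would absorb the first term into the left-hand side and conclude
\begin{equation*}
\|\xi-\xi^n\|_{\infty,H_p^{1+\delta}}\le C\,\|b-b^n\|_{\infty,H_q^{-\beta}\cap H_{\tilde q}^{-\beta}},
\end{equation*}
which gives the first claim.

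For the second claim, I would invoke the fractional Morrey inequality (Lemma \ref{lm: fractional Morrey ineq}): since $(\delta,p)\in K(\beta,q)$ we have $p>d/\delta$, so $H_p^{1+\delta}\hookrightarrow C^{1,\alpha}$ with $\alpha=\delta-d/p$. Consequently
\begin{equation*}
\sup_{(t,x)}|\xi^n(t,x)-\xi(t,x)| + \sup_{(t,x)}|\nabla\xi^n(t,x)-\nabla\xi(t,x)|\le C\|\xi^n-\xi\|_{\infty,H_p^{1+\delta}},
\end{equation*}
and the uniform convergence of $\xi^n$ and $\nabla\xi^n$ follows immediately from the first claim.

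The main technical obstacle is the need for the second norm $\|\cdot\|_{H_{\tilde q}^{-\beta}}$: the inhomogeneity $b$ is a distribution and one must turn its regularity into a bound in $H_p^{1+\delta}$ without an extra factor $\|\nabla\xi\|$; this forces the $\tilde q$-integrability so that the Sobolev embedding between $L^{\tilde q}$-based and $L^p$-based spaces supplies the missing room. Everything else is a routine fixed-point/continuity argument analogous to the one already carried out for $u$.
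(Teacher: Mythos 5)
This lemma is quoted directly from Flandoli et al.\ \cite{flandoli_et.al14} and the paper does not reprove it, so there is no in-paper argument to compare against; what follows evaluates your reconstruction on its own terms. The overall shape of your plan is right: the mild formulation with the damped semigroup $e^{-(\lambda+1)(r-t)}P_p(r-t)$, the time reversal, the decomposition $b^n\nabla\xi^n - b\nabla\xi = b^n(\nabla\xi^n - \nabla\xi) + (b^n - b)\nabla\xi$, the $\rho$-weighted contraction estimate, and the final passage through the fractional Morrey inequality all match the machinery established for $u^n$ in Lemma \ref{lm: continuity of u wrt approximation}.

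There is, however, one genuine error in the step that handles the inhomogeneous term. You assert that $H^{-\beta}_{\tilde q}\hookrightarrow H^{-\beta}_p$ ``via Sobolev embedding.'' This embedding does not hold. On $\mathbb R^d$, Bessel-potential spaces of a fixed smoothness order do not embed across Lebesgue exponents: $H^{s}_{p_1}\hookrightarrow H^{s'}_{p_2}$ requires $p_1\le p_2$ together with $s-d/p_1\ge s'-d/p_2$, and for $s=s'$ this forces $p_1 = p_2$. In the present parameter regime one in fact has $\tilde q = d/(1-\beta) < d/\delta < p < q$, so you need to \emph{raise} the Lebesgue exponent without sacrificing any smoothness, which is impossible. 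The correct mechanism --- and the reason \cite{flandoli_et.al14} require $b\in H^{-\beta}_q\cap H^{-\beta}_{\tilde q}$ rather than just $H^{-\beta}_q$ --- is interpolation: since $\tilde q < p < q$, the intersection $H^{-\beta}_{\tilde q}\cap H^{-\beta}_q$ embeds continuously into $H^{-\beta}_p$, giving $\|b^n - b\|_{\infty,H^{-\beta}_p}\le C\|b^n - b\|_{\infty,H^{-\beta}_{\tilde q}\cap H^{-\beta}_q}$. Once you replace the false Sobolev-embedding claim with this interpolation argument, the rest of your contraction estimate and the Morrey step go through.
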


For $\psi_n$ and $V^n$, we have the following result. 
\begin{lemma}\label{lm: properties of psi_n and V^n}
Let  $b^n\to b$  in $L^\infty([0,T], H^{-\beta}_{ q}\cap H^{-\beta}_{\tilde q} )$. Then
	\begin{itemize} 
		\item   [(i)] 
		the functions $\psi_n$ and $\psi$ are jointly $\gamma$-H\"older continuous (for any $\gamma<1-\delta-\beta$) in the first variable and Lipschitz continuous in the second variable, uniformly in $n$, in particular there exists a constant $C>0$ independent of $n$ such that 
		\begin{equation}\label{eq: psi_n is holder-lipshitz continuous}
		|\psi_n(t,x) - \psi_n(s,y)|\leq C (|t-s|^\gamma  + |x-y|).
		\end{equation}
		\item  [(ii)] 
		the moments of $V^n$ can be controlled uniformly in $n$, in particular there exists a constant $C=C(p)>0$ independent of $n$ such that, for every $a>2$, 
		\begin{equation}\label{eq: bounds for p-moments of V^n}
		\EE\lt[|V^n_t - V^n_s|^{a}\rt] \leq C (|t-s|^{a}  + |t-s|^{a/2}).
		\end{equation}		
	\end{itemize}     
\end{lemma}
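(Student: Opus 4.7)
The plan is to exploit the identity $\varphi_n(s,\cdot)=\mathrm I_d+\xi^n(s,\cdot)$, together with the uniform bounds on $\xi^n$ and $\nabla\xi^n$ coming from Lemma \ref{lm: convergence of psi_n as b^n}, to transfer the regularity of $\xi^n$ to its spatial inverse $\psi_n$, and then to feed the resulting uniform control of $\psi_n$ into Burkholder--Davis--Gundy estimates on the SDE \eqref{eq: SDE for V} written for $V^n$.

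First, for (i), I will recall that because $\xi^n\to\xi$ in $C([0,T];H_p^{1+\delta})$, the fractional Morrey inequality (Lemma \ref{lm: fractional Morrey ineq}) yields uniform convergence of $\xi^n$ and $\nabla\xi^n$ on $[0,T]\times\mathbb R^d$; in particular $\sup_n\|\nabla\xi^n\|_\infty\le K$, and by choosing $\lambda$ large one can assume $K<1$, so that $\varphi_n(s,\cdot)$ is a bi-Lipschitz bijection with $\mathrm{Lip}(\psi_n(s,\cdot))\le (1-K)^{-1}=:C$, uniformly in $n$. This gives Lipschitz continuity in $x$. For the time regularity, fix $v\in\mathbb R^d$ and set $x_1:=\psi_n(t,v)$, $x_2:=\psi_n(s,v)$; from $\varphi_n(t,x_1)=\varphi_n(s,x_2)=v$ I get
\begin{equation*}
|x_1-x_2|\le\|\nabla\xi^n\|_\infty|x_1-x_2|+\sup_{y}|\xi^n(t,y)-\xi^n(s,y)|,
\end{equation*}
so that $|\psi_n(t,v)-\psi_n(s,v)|\le (1-K)^{-1}\|\xi^n(t,\cdot)-\xi^n(s,\cdot)\|_\infty$. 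The key step is therefore to show that $\xi^n$ is uniformly $\gamma$-H\"older in time into $L^\infty$ (equivalently into $C^{1,\alpha}$), for every $\gamma<1-\delta-\beta$. This is done exactly as in Lemma \ref{lm: uniform bound u^n}: write $\xi^n(t)-\xi^n(s)$ in mild form, bound the three resulting pieces via Proposition \ref{prop: 11inFlIsRu} with $\varepsilon=(1-\delta-\beta)/2$, and use the uniform bounds $\|b^n\|_{\infty,H_q^{-\beta}}\le C$ and $\|\xi^n\|_{\infty,H_p^{1+\delta}}\le C$ (again from Lemma \ref{lm: convergence of psi_n as b^n}) to get a constant independent of $n$. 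Combining the two one-variable bounds via the triangle inequality yields \eqref{eq: psi_n is holder-lipshitz continuous}.

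For (ii), I will apply BDG to the SDE \eqref{eq: SDE for V} written between times $s<t$:
\begin{equation*}
V^n_t-V^n_s=(\lambda+1)\int_s^t\xi^n(r,\psi_n(r,V^n_r))\,\mathrm dr+\int_s^t\bigl(\nabla\xi^n(r,\psi_n(r,V^n_r))+\mathrm I_d\bigr)\,\mathrm dW_r.
\end{equation*}
For $a>2$, H\"older's inequality on the drift term and BDG on the martingale term give
\begin{equation*}
\mathbb E|V^n_t-V^n_s|^{a}\le C_a|t-s|^{a-1}\!\int_s^t\!\mathbb E|\xi^n(r,\psi_n(r,V^n_r))|^{a}\mathrm dr+C_a|t-s|^{a/2-1}\!\int_s^t\!\mathbb E|\nabla\xi^n(r,\psi_n(r,V^n_r))+\mathrm I_d|^{a}\mathrm dr.
\end{equation*}
The uniform sup-bounds $\sup_n\|\xi^n\|_\infty<\infty$ and $\sup_n\|\nabla\xi^n\|_\infty<\infty$ (from Lemma \ref{lm: convergence of psi_n as b^n}) eliminate the dependence on the random trajectory and yield \eqref{eq: bounds for p-moments of V^n} with a constant depending only on $a$, $\lambda$, $d$ and the uniform sup-bounds.

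I expect the main technical obstacle to be the \emph{uniform-in-$n$} time H\"older estimate for $\xi^n$ used in part (i): without it, one only gets a continuous dependence on $n$ and the bound in \eqref{eq: psi_n is holder-lipshitz continuous} would blow up. Carrying out the Proposition \ref{prop: 11inFlIsRu} argument requires the pointwise product bound $\|b^n\nabla\xi^n\|_{\infty,H_p^{-\beta}}\le C\|b^n\|_{\infty,H_q^{-\beta}}\|\xi^n\|_{\infty,H_p^{1+\delta}}$, both factors of which are uniformly controlled by Lemma \ref{lm: convergence of psi_n as b^n}. Once this H\"older bound is in hand, both (i) and (ii) are essentially routine.
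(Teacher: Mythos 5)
Your proposal is correct and follows essentially the same route as the paper: the triangle-inequality split into a spatial term (controlled by the uniform Lipschitz bound on $\psi_n$ coming from $\sup_n\|\nabla\xi^n\|_\infty<1$) and a time term (controlled via the fixed-point identity $\varphi_n(\cdot,\psi_n)=\mathrm{id}$ and a uniform $C^{0,\gamma}([0,T];H_p^{1+\delta})$ bound on $\xi^n$ obtained as in Lemma~\ref{lm: uniform bound u^n}), followed by a BDG/H\"older estimate on \eqref{eq: SDE for V} for part (ii) with the uniform sup-bounds on $\xi^n,\nabla\xi^n$. The paper simply cites the relevant steps from \cite{flandoli_et.al14} rather than writing them out, so your more explicit version is, if anything, more self-contained.
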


\begin{proof}
	(i) Let $t,s>0$ and $x,y\in\mathbb R^d$. Then $$|\psi_n(t,x)-\psi_n(s,y)|\le|\psi_n(t,x)-\psi_n(t,y)| +|\psi_n(t,y)-\psi_n(s,y)|.$$
	The first term on the right hand side is bounded by $2|x-y|$ since $\sup_{(t,x)}|\nabla \psi_n(t,x)| <2$ by \cite[Lemma 24 (ii)]{flandoli_et.al14}. 
	The second term can be bounded with a similar  proof as \cite[Lemma 22, Step 3]{flandoli_et.al14} and one gets 
	\[
	|\psi_n(t,y)-\psi_n(s,y)| \leq \frac12 |\psi_n(t,y)-\psi_n(s,y)| + |\xi^n(t,y) - \xi^n(s,y)|.
	\]
	Using the fractional Morrey inequality (Lemma \ref{lm: fractional Morrey ineq})  we have
	\[
	|\xi^n(t,y)-\xi^n(s,y)| \leq C \| \xi^n(t,\cdot)-\xi^n(s,\cdot)\|_{H^{1+\delta}_p}\leq C \| \xi^n\|_{C^{0,\gamma}}|t-s|^\gamma,
	\]
	where $\|\xi^n\|_{C^{0,\gamma}}\le C$ with $C$ independent of $n$ (proof similar to Lemma \ref{lm: properties of solution u and u^n of PDE}, {(i)}).

	(ii) This bound is proven by similar arguments as in Step 3 in the proof of \cite[Proposition 29]{flandoli_et.al14}, with the only difference that the exponent 4 is replaced by $a$ for any $a>2$.
\end{proof}

\subsection{Definition of solution for FBSDE and existence} \label{ssc: virtual-weak solution: def and existence}
  Let us consider  the virtual solution  to the forward equation in \eqref{eq: FBSDE}, which is a \emph{standard  set-up} $(\Omega, \mathcal F, P, \mathbb F,(W_t)_t)$ and a process $(X^{t,x}_s)_s$ that solves \eqref{eq: virtual solution}. We introduce the following definition.

\begin{definition}\label{def: virtual-weak solution FBSDE}
	A {\em virtual-weak solution} to the FBSDE \eqref{eq: FBSDE}  is a standard set-up $(\Omega, \mathcal F, \PP, \mathbb F, (W_t)_t)$ and a triplet of processes $(X^{t,x}, Y^{t,x}, Z^{t,x})$ such that
	\begin{itemize}  
		\item  $X^{t,x}, Y^{t,x}$ and $Z^{t,x}$ are $\mathbb F$-adapted, $X^{t,x} $ and $Y^{t,x}$ are continuous;
		\item  $\PP\left(|\Phi(X_T^{t,x})|+\int_0^T\lt(  |f(s, X_s^{t,x}, Y_s^{t,x}, Z_s^{t,x})| + |Z_s^{t,x}|^2\rt) ds <\infty \right) =1$; 
		\item  $(X^{t,x},Y^{t,x},Z^{t,x})$ verifies,  $\PP$-a.s.,
\begin{equation}\label{eq: FBSDE-virtual} 
	\left\{
\begin{array}{l}	
X^{t,x}_{s}=\  x+ \xi(t,x)- \xi(s,X^{t,x}_{s})+(\lambda+1)\int_{t}^{s}\xi(r,X^{t,x}_{r})\mathrm{d} r\\ 
		\qquad +\int_{t}^{s} { ( \nabla \xi(r,X^{t,x}_{r}) + \mathrm I_d)} \mathrm{d}W_{r},\\ 
	 Y_s^{t,x} = \Phi( X_T^{t,x})  - \int_s^T Z_r^{t,x} \mathrm dW_r + \int_s^T f(r,X_r^{t,x}, Y_r^{t,x},Z_r^{t,x}) \mathrm dr,\\
	 \forall   s\in[t,T].
	 \end{array}\right.
\end{equation}
	\end{itemize}
\end{definition}

{ 
As we can see, the system \eqref{eq: FBSDE-virtual} is decoupled and the backward equation does not involve the rough term $b$, hence using the results of \cite{flandoli_et.al14} we first solve the forward SDE and then  we can apply standard arguments on the BSDE to obtain existence and uniqueness of a (strong) solution $(Y,Z)$ for the BSDE. Of course, when this is put together with the virtual solution $X$ one obtains a virtual-weak solution $(X,Y,Z)$, as demonstrated below. 

\begin{theorem}\label{thm: exist! virtual-weak sol}
Let	Assumption \ref{ass: f in Rd} hold and let $b \in L^\infty([0,T], H^{-\beta}_{ q}\cap H^{-\beta}_{\tilde q} )$.
	Then there exists a unique virtual-weak solution  to the FBSDE system \eqref{eq: FBSDE} given by the standard set-up $(\Omega, \mathcal F, \PP, \mathbb F, (W_t)_t)$ and the triplet $(X^{t,x},Y^{t,x},  Z^{t,x})$, where  the process $X^{t,x}$ and the standard set-up are the unique in law virtual solution of \eqref{eq: forward SDE}, and the couple $ ( Y^{t,x},  Z^{t,x})$ is the unique strong solution of the BSDE in \eqref{eq: FBSDE} for a given forward process  $X$.
\end{theorem}

\begin{proof}
In this proof we will drop the superscript ${t,x}$ for shortness.
\\
By Proposition \ref{pr: existence of a virtual solution}, there exists a unique virtual solution to the forward component in \eqref{eq: FBSDE}, which we denote by $X$ with standard set-up  $(\Omega, \mathcal F, \PP, \mathbb F, (W_t)_t)$. Moreover we know that $X_s = \psi(s, V_s) $, where $V$ is the unique weak solution to the SDE \eqref{eq: SDE for V} and $\psi$ is jointly continuous. 

Standard results on BSDEs (see \cite[Theorem 4.3.1]{zhang17}) can be applied to 
\[
Y_s = \xi + \int_s^T g(r, Y_r, Z_r) \mathrm dr - \int_s^T Z_r \mathrm dW_r 
\]
where $\xi:= \Phi(X_T^{t,x})$ and $g(r,y,z):= f(r, \psi(s, V_s), y,z)$ is a random function. Indeed $\xi$ and  $g$ satisfy \cite[Assumption 4.0.1]{zhang17} because  (i) the filtration we use is the Brownian filtration; (ii) $g$ is $\mathbb F$-measurable in all variables; (iii) $g$ is uniformly Lipschitz in $(y,z)$ with constant $L $ (by Assumption \ref{ass: f in Rd} on $f$); (iv) $\mathbb E[|\xi|^2]<\infty$ because $\Phi$ is bounded and continuous (see Remark \ref{rm: link between Assumptions}) and $\mathbb E[| \int_0^T g(r, 0,0) \mathrm |^2 ]\leq T^2 C^2<\infty$ because $f(r,x, 0,0)$ is uniformly bounded by $C$ according to  Assumption  \ref{ass: f in Rd}.
Thus there exists a unique strong solution $(Y,Z)$ to the BSDE in \eqref{eq: FBSDE} when $X$ is given by the (unique) virtual solution of the forward SDE in \eqref{eq: FBSDE}, which implies that $(X,Y,Z)$ with the standard set-up  $(\Omega, \mathcal F, \PP, \mathbb F, (W_t)_t)$ is the unique virtual-weak solution of \eqref{eq: FBSDE} (because it satisfies  all three bullet points in Definition \ref{def: virtual-weak solution FBSDE}).
\end{proof}

\begin{remark}
Since the BSDE in \eqref{eq: FBSDE} is solved by standard arguments and the forward SDE does not involve $\Phi$, we do not actually need the assumption on $\Phi$ stated in Assumption \ref{ass: f in sobolev space}. Instead it is enough that $\Phi$ is e.g.\ bounded and continuous. 
\end{remark}

Finally we conclude the paper with a Feynman-Kac representation for the virtual-weak solution solution $(X,Y,Z)$ of \eqref{eq: FBSDE} in terms of the solution $u$ to the PDE \eqref{eq: PDE backward Rd}.

\begin{theorem}\label{thm: Feynman-Kac formula}
Let	Assumption \ref{ass: f in Rd} hold and let $b \in L^\infty([0,T], H^{-\beta}_{ q}\cap H^{-\beta}_{\tilde q} )$.
	Then the $(Y,Z)$-component of the unique virtual-weak solution  to the FBSDE system \eqref{eq: FBSDE} is given by  $( u(\cdot,X^{t,x}), \nabla u(\cdot,X^{t,x}))$, where $X^{t,x}$ is the unique virtual solution of \eqref{eq: forward SDE} and $u$ is the solution of PDE \eqref{eq: PDE backward Rd}.
\end{theorem}

}
\begin{proof}
	In this proof we will drop the superscript ${t,x}$ for shortness.
	
	By Remark \ref{rm: link between Assumptions} and Theorem \ref{thm: PDE backward} there exists a unique mild solution to \eqref{eq: PDE backward Rd}, which we denote by $u$. 
{ 
To prove that $(Y,Z) = (  u(\cdot,X), \nabla u(\cdot,X))$	it is enough to show that  $(u(\cdot,X), \nabla u(\cdot,X))$ solves the backward component in \eqref{eq: FBSDE} $\PP$-a.s, with $X$ being the virtual solution of the forward component. Indeed the integrability conditions on $f$ stated in Definition \ref{def: virtual-weak solution FBSDE} are fulfilled because $f$ is Lipschitz continuous in $(y,z)$, bounded at $(t,x,0,0)$ uniformly in $(t,x)$ and $u $ and $\nabla u$ are uniformly bounded by Lemma  \ref{lm: properties of solution u and u^n of PDE};  and  $Z = \nabla u(\cdot,X)$ is square integrable because $\nabla u$ is uniformly bounded again by Lemma \ref{lm: properties of solution u and u^n of PDE}.
}
		
	Let us denote by   $(X^n,Y^n, Z^n)$  the classical strong solution of the FBSDE 
	\begin{equation}\label{eq: forward SDE n}
			\left\{
			\begin{array}{l}
		X_s^{n} = x +  \int_t^s b^n(r,X_r^{n}) \mathrm dr  +\int_t^s \mathrm dW_r,\\
		 Y_s^{n} = \Phi( X_T^{n})  - \int_s^T Z^n_r \mathrm dW_r + \int_s^T f(r,X^n_r, Y_r^{n},Z_r^n) \mathrm dr
		\end{array}\right.
	\end{equation} 
	in $(\Omega, \mathcal F, \PP, \mathbb F, (W_t)_t)$, where  {$b^n\in C([0,T];C_b^1(\R^d;\R^d))$} such that $b^n\to b$ in $L^\infty\left ([0,T]; H^{-\beta}_{q}\cap  H^{-\beta}_{\tilde q}  \right) $.  This strong solution $X^n$ converges in law to $X$ thanks to \cite[Proposition 29]{flandoli_et.al14}.
 Moreover we define $$M^n_s : = \int_t^s Z^n_r \mathrm d W_r \text{ and } F^n_s := \int_t^s f(r,X^n_r,Y^n_r,Z_r^{n} ) \mathrm d r$$ for any $t\leq s\leq T$. Note that from classical theory of BSDEs (see for example \cite{ElKaroui-et.al.97}) we have that $Y^n_s= u^n(s, X^n_s)$ and $Z^n_s = \nabla u^n(s, X^n_s)$.

  We will show that there exists a subsequence of $(X^n, Y^n, Z^n, M^n, F^n, W)$ that converges in law to a limit vector and then we will identify this limit with the components of the solution of  \eqref{eq: FBSDE}.
		
	We   prove the tightness of the sequence 
		$$
		\nu^n=(X^n, {Y}^n, {Z}^n, {M}^n ,{F}^n ,W) 
		$$
	in the space of continuous paths $C([0,T]; \mathbb R^{d^\prime})$, where {$d^\prime=2d+3m+m\times d$}. To do so, we use the following tightness criterion (see for example,  \cite[Corollary 16.9]{kallenberg01}): A sequence of stochastic processes $(\nu^n)_n$ with values in $\mathbb R^d$ is tight in  $C([0,T]; \mathbb R^d)$ if $(\nu_0^n)_n$ is tight and there exists $a, b, C>0$ (independent of $n$) such that 
	\[
	\EE[|\nu^n_r-\nu^n_s|^a]\leq C |r-s|^{1+b}.
	\]
	First note that the initial condition $\nu_0^n$ is deterministic and it converges  pointwise to $\nu_0$, hence it is tight. As for the other bound, we look for an estimate of the quantity
	\begin{align*}
		\EE|\nu^n_r-\nu^n_s|^{a} \leq  & C\EE(|X^n_r-X^n_s|^{a}+ |Y^n_r-Y^n_s|^{a} + |Z^n_r-Z^n_s|^{a}  \\
		& +|M^n_r-M^n_s|^{a}+|F^n_r-F^n_s|^{a}+ |W_r-W_s|^{a}),
	\end{align*}
	for $a>2$, where the constant $C$ depends only on $a$.

The first term is defined as 	$X^n_r=\psi_n(r, V^n_r)$. By Lemma \ref{lm: properties of psi_n and V^n} part (i) we get
	\begin{align*}
		|X^n_r-X^n_s|^{a} & = |\psi_n(r, V^n_r)-\psi_n(s, V^n_s)|^{a} \\
		& \leq C ( |V_r^n-V_s^n| +|r-s|^\gamma )^{a} \\
		& \leq C  (|V_r^n-V_s^n|^{a} +|r-s|^{a\gamma} ),
	\end{align*} 
	and by using Lemma \ref{lm: properties of psi_n and V^n} part (ii) we get
  \begin{align*} 
		\EE|X^n_r-X^n_s|^{a}  &\leq C (\EE|V_r^n-V_s^n|^{a} +|r-s|^{a\gamma} )\\   
		&\leq C  (|r-s|^{a} +|r-s|^{a/2} +|r-s|^{a\gamma} )\\ 
		&\leq C (  |r-s|^{a/2} + |r-s|^{a\gamma}).
	\end{align*}

{
	Next we look at $\EE|Y^n_r-Y^n_s|^{a}$, and using equation \eqref{eq: holder bound for nu} from Lemma \ref{lm: properties of solution u and u^n of PDE} we have 
	\begin{align*}
		\EE|Y^n_r-Y^n_s|^{a} & = \EE|u^n(r, X_r)-u^n(s, X_s)|^{a} \\
		& \leq C\EE( |X_r-X_s| +|r-s|^\gamma )^{a} \\
		& \leq C (\EE|X_r-X_s|^{a} +|r-s|^{a\gamma})\\
		&\leq   C ( |r-s|^{a/2}+  |r-s|^{a\gamma}).
	\end{align*}
	
	The third  term $\EE|Z^n_r-Z^n_s|^{a}$ is done similarly using equation \eqref{eq: holder bound for nu_x} from Lemma \ref{lm: properties of solution u and u^n of PDE} to get $\EE|Z^n_r-Z^n_s|^{a}  \leq   C ( |r-s|^{a\alpha/2}+  |r-s|^{a\gamma})$. 
	
Concerning the term involving $M^n$, using equation \eqref{eq: sup bound for nu_x} from Lemma \ref{lm: properties of solution u and u^n of PDE}, we get
\begin{align*}
\EE|M^n_r-M^n_s|^{a}   &   \leq C \EE\left( \left| \int_t^s \nabla u^n(v, X^n_v) \mathrm dW_v \right |^2 \right)^{a/2}\\
 & \leq C \EE\left(\int_s^r  \left|\nabla u^n(v, X_v)\right |^2 \mathrm dv \right)^{a/2}\\
		&\leq C |r-s|^{a/2}.
\end{align*}

	The last non-trivial term   is 
	\begin{align*}
		\EE|F^n_r-F^n_s|^{a} & = \EE\left(  \int _s^r | f(v, X_v, u^n(v, X_v),  \nabla u^n(v, X_v) ) | \mathrm d v \right)^{a}.
	\end{align*}
The function $f$ inside the integral can be bounded using Assumption \ref{ass: f in Rd} as follows
	\begin{align*}		
&  \sup_{(v,x)} | f(v, x , u^n(v,  x),  \nabla u^n(v, x) ) | \\
\leq & \sup_{(v,x)} | f(v, x , u^n(v,  x),  \nabla u^n(v, x) ) -f(v,x,0,0)| + \sup_{v,x} |f(v,x,0,0)| \\
\leq& \sup_{(v,x)} C (1+ |u^n(v,  x)| + |\nabla u^n(v, x)| ) \\ 
\leq & C  ,
	\end{align*}
 where we have used equation \eqref{eq: sup bound for nu} from Lemma \ref{lm: properties of solution u and u^n of PDE}. Thus 
 \[
 	\EE|F^n_r-F^n_s|^{a} \leq  \EE\left(  \int _s^r  C\mathrm d v \right)^{a} \leq C |r-s|^a. 
 \]

Putting everything together we have
	\begin{align*}
		\EE|\nu^n_r-\nu^n_s|^{a} &\leq C (|r-s|^{a/2} +   |r-s|^{a\gamma}+ |r-s|^{a}),
	\end{align*}
	so choosing $a$ big enough such that  $\min\{a/2, a\gamma\}>1$, then by the tightness criteria we have that $\nu^n$ is tight.

Next we want to identify the limit of $(X^n, Y^n,Z^n, M^n, F^n,W)$. Let us denote by $\nu$ one  limit of $\nu^n $ (or of a subsequence) in  $C([0,T]; \mathbb R^{d^\prime})$, which exists by tightness shown as above. Note that the limit might not be unique. 
 By Skorohod theorem there exists another probability space $(\wt \Omega, \wt F, \wt \PP)$ and other random variables $\wt \nu^n$ and $\wt \nu$ on this space with values in $C([0,T]; \mathbb R^{d^\prime})$  such that $\wt \nu^n\to\wt \nu$, $\wt \PP$-a.s.\ and they have the same laws as the original random variables, in particular $\wt \PP\circ (\wt \nu^n)^{-1} =  \PP\circ ( \nu^n)^{-1}$ and $\wt \PP\circ (\wt \nu)^{-1} =  \PP\circ ( \nu)^{-1}$.
		
		Recall that for fixed $n$ (some of) the components of the vector $\nu^n$ satisfy 
		\[
		Y_s^n = Y_t^n + M^n_s - F^n_s, \quad \PP\text{-a.s.},
		\]
		hence 
		\[
		\wt Y_s^n = \wt Y_t^n  + \wt M^n_s - \wt F^n_s, \quad \wt  \PP\text{-a.s.}.
		\]
		Now taking the limit (along a subsequence) as $n\to \infty$ and by the $\wt \PP$-almost sure convergence of $\wt \nu^n$ to $\wt \nu$ we get
		\[
		\wt Y_s = \wt Y_t + \wt M_s - \wt F_s, \quad \wt  \PP\text{-a.s.},
		\]
		and since $\wt \PP\circ (\wt \nu)^{-1} = \PP\circ ( \nu)^{-1}$ we also have that the components of the limit vector $\nu$ satisfy 
		\[
    	Y_s =  Y_t  + M_s -  F_s, \quad   \PP\text{-a.s.}.
		\]

The last step in the proof consists in showing that the limiting components are of the desired form, for example that $M_s = \int_t^s Z_r \mathrm dW_r $ etc.

We start by  showing the convergence in law of $u^n(s, X^n_s)\to u(s, X_s)$. We do so by using the following result from \cite[Section 3, Theorem 3.1]{billingsley68}: Let $(S,\mu)$ be a metric space and let us consider $S$-valued random variables such that $\xi_n\to\xi$ in law and $\mu(\xi_n , \zeta_n)\to 0$ in probability. Then $\zeta_n\to \xi$ in law.
	
	In the present case, on one hand we have that for any bounded and continuous functional {$G: C([0,T]; \mathbb R^m) \to \mathbb R$}, then $G\circ u$ is also bounded and continuous because  $u$ is uniformly continuous by equation \eqref{eq: holder bound for nu} from Lemma \ref{lm: properties of solution u and u^n of PDE}. Hence by weak convergence of $X^n \to X$ we obtain weak convergence of $G(u(\cdot,X^n))\to G(u(\cdot,X))$, that is $u(\cdot, X^n)\to u(\cdot, X)$ in law. On the other hand   $u^n(\cdot, X^n) - u(\cdot, X^n)\to 0 $ in {$C([0,T]; \mathbb R^m) $}, $\PP$-a.s., because $u^n\to u$ uniformly by Lemma \ref{lm: continuity of u wrt approximation} part (ii), hence  $|u^n(\cdot, X^n) - u(\cdot, X^n)|\to 0 $ in probability. These two facts imply the convergence in law of $u^n(s, X^n_s)\to u(s, X_s)$ by  \cite[Section 3, Theorem 3.1]{billingsley68}. A similar argument can be applied to  $\nabla u^n(s, X^n_s)\to  \nabla u(s, X_s)$ by using equation \eqref{eq: holder bound for nu_x} instead of \eqref{eq: holder bound for nu}.

	Similarly as  above, one can see that the convergence in law means that the components $Y$ and $Z$ in the limit vector $\nu$ satisfy $Y_s=u(s, X_s)$   and  $Z_s=\nabla u(s, X_s)$ $\PP$-a.s.\ in {$ C([0,T]; \mathbb R^m) $ and $C([0,T]; \mathbb R^{m\times d})$}, since $Y^n_s=u^n(s, X^n_s)$ and $Z^n_s=\nabla u^n(s, X^n_s)$.
}

{For the component $F$, we use the continuity assumption of $f$ in  $(x,y,z) $ and the continuity of $u$ and $\nabla u$ in $x$ to show that the map $$X_\cdot^n\mapsto \int^\cdot_t f(r,X^n_r, u(r, X^n_r), \nabla u(r, X^n_r))\d r$$ composed with any bounded and  continuous functional {$G: C([0,T]; \mathbb R^m) \to \mathbb R$} is still bounded and continuous, hence we have that  
	\begin{align*}
	&\EE\lt[G\lt(\int^\cdot_t f(r,X^n_r, u(r, X^n_r), \nabla u(r, X^n_r))\d r\rt)\rt] \\
	&\to \EE\lt[G\lt(\int^\cdot_t f(r,X_r, u(r, X_r), \nabla u(r, X_r))\d r\rt)\rt]
	\end{align*}
	  from the weak convergence of $X^n\to X$. 
 Moreover the convergence in probability of $ |u^n(\cdot, X^n) - u(\cdot, X^n)|\to 0  $  in {$C([0,T]; \R^m)$} and  the Lipschitz character of $f$ imply that 
	 \begin{align*}
	 &\int_t^\cdot |f( r,X_r^n, u^n( r, X_r^n), \nabla u^n(r, X^n_r) )-f(r, X_r^n, u(r, X^n_r), \nabla u(r, X^n_r) )| \d r \\
	 &\leq L \int_t^\cdot \left (|  u^n(r, X^n_r)- u(r, X^n_r)|+ | \nabla  u^n(r, X^n_r)-\nabla u(r, X^n_r)|\right) \d r \\
	 &\leq C \int_t^\cdot |  u^n(r, X^n_r)- u(r, X^n_r)| \d r \to 0
	 \end{align*} 
	 in probability  in {$C([0,T]; \R^m)$}.
	Hence applying  again    \cite[Section 3, Theorem 3.1]{billingsley68} we obtain that $	\int^\cdot_t f(r,X^n_r, u^n(r, X^n_r),\nabla u^n(r, X^n_r))\d r $ converges to $\int^\cdot_t f(r,X_r, u(r, X_r), \nabla u(r, X_r))\d r $
	in law.  Thus, for the component $F$ of the limit vector $\nu$ we have that 
	$$
	F_s=\int_t^s f(r, X_r, u(r,X_r), \nabla u (r, X_r) ) \mathrm dr = \int_t^s f(r, X_r, Y_r, Z_r ) \mathrm dr,
	$$
	 $\PP${-a.s.}. 
	 
	It remains to show that $M_s=\int_t^s Z_r \mathrm dW_r$, $\PP$-a.s. This follows from \cite[Theorem 7.10]{kurtz_protter96} (see also \cite[Section 2.2]{engelbert-peskir14}) and from the fact that $Z^n\to Z$ weakly. 
	
	Putting everything together and using the fact that 
	$$
	Y_t= Y_T - \int_t^T Z_r \mathrm dW_r + \int_t^T f(r, X_r, Y_r, Z_r ) \mathrm dr 
	$$ 
	we have
	\begin{align*}
		Y_s &=  Y_t + \int_t^s Z_r \mathrm dW_r   -  \int_t^s f(r, X_r, Y_r, Z_r ) \mathrm dr  \\
		&  = Y_T - \int_s^T Z_r \mathrm dW_r +  \int_s^T f(r, X_r, Y_r, Z_r ) \mathrm dr, \ \ \PP\text{-a.s.},
	\end{align*}
	 where $Y_s=u(s, X_s)$   and  $Z_s=\nabla u(s, X_s)$, as wanted.
	 }
\end{proof}

\noindent {\bf Acknowledgement:} The authors would like to thank Tiziano De Angelis for various suggestions and fruitful discussions on this topic, {and the anonymous referees for reading the manuscript and providing useful hints that led to several improvements.}

\bibliographystyle{plain}
\bibliography{biblio-fbsde}

\def\cprime{$'$}
\begin{thebibliography}{10}

\bibitem{antonelli93}
F.~Antonelli.
\newblock Backward-forward stochastic differential equations.
\newblock {\em Ann. Appl. Probab.}, 3(3):777--793, 1993.

\bibitem{AntonelliMa03}
F.~Antonelli and J.~Ma.
\newblock Weak solutions of forward-backward {SDE}'s.
\newblock {\em Stoch. Anal. Appl.}, 21(3):493--514, 2003.

\bibitem{bally_matoussi01}
V.~Bally and A.~Matoussi.
\newblock Weak solutions for spdes and backward doubly stochastic differential
  equations.
\newblock {\em Journal of Theoretical Probability}, 14(1):125--164, 2001.

\bibitem{billingsley68}
P.~Billingsley.
\newblock {\em Convergence of Probability Measures}.
\newblock Wiley series in probability and statistics. Wiley, 2nd edition, 1999.

\bibitem{Buckdahn_et_al04}
R.~Buckdahn, H.-J. Engelbert, and A.~R\u{a}\c{s}canu.
\newblock On weak solutions of backward stochastic differential equations.
\newblock {\em Teor. Veroyatn. Primen.}, 49(1):70--108, 2004.

\bibitem{davies89}
E.~B. Davies.
\newblock {\em Heat kernels and spectral theory}, volume~92 of {\em Cambridge
  Tracts in Mathematics}.
\newblock Cambridge University Press, Cambridge, 1989.

\bibitem{delarue-guatteri06}
F.~Delarue and G.~Guatteri.
\newblock Weak existence and uniqueness for forward-backward {SDE}s.
\newblock {\em Stoch. Proc. Appl.}, 116:1712--1742, 2006.

\bibitem{diehl-zhang17}
J.~Diehl and J.~Zhang.
\newblock Backward stochastic differential equations with {Y}oung drift.
\newblock {\em Probab. Uncertain. Quant. Risk}, 2:Paper No. 5, 17, 2017.

\bibitem{ElKaroui-et.al.97}
N.~El~Karoui, S.~Peng, and M.~C. Quenez.
\newblock Backward stochastic differential equations in finance.
\newblock {\em Math. Finance}, 7(1):1--71, 1997.

\bibitem{engelbert-peskir14}
H-J. Engelbert and G.~Peskir.
\newblock Stochastic differential equations for sticky {B}rownian motion.
\newblock {\em Stochastics: An International Journal of Probability and
  Stochastic Reports}, 86(6):993--1021, 2014.

\bibitem{erraoui-ouknine-sbi97}
M.~Erraoui, Y.~Ouknine, and A.~Sbi.
\newblock Backward stochastic differential equations with distribution as
  terminal condition.
\newblock {\em Random Operators and Stochastic Equations}, 5(4):349--356, 1997.

\bibitem{erraoui-ouknine-sbi98}
M.~Erraoui, Y.~Ouknine, and A.~Sbi.
\newblock Reflected solutions of backward stochastic differential equations
  with distribution as terminal condition.
\newblock {\em Random Operators and Stochastic Equations}, 6(1):1--16, 1998.

\bibitem{flandoli_et.al14}
F.~Flandoli, E.~Issoglio, and F.~Russo.
\newblock Multidimensional {SDE}s with distributional coefficients.
\newblock {\em Trans. Amer. Math. Soc.}, 369:1665--1688, 2017.

\bibitem{gawedzki08}
K.~Gawedzki.
\newblock Stochastic processes in turbulent transport.
\newblock ArXiv.0806.1949, 2008.

\bibitem{gozzi_russo06}
F.~Gozzi and F.~Russo.
\newblock Weak {D}irichlet processes with a stochastic control perspective.
\newblock {\em Stochastic Processes and their Applications}, 116
  (11):1563--1583, 2006.

\bibitem{hinz_issoglio_zaehle13}
M.~Hinz, E.~Issoglio, and M.~Z\"ahle.
\newblock Elementary pathwise methods for nonlinear parabolic and transport
  type spde with fractal noise.
\newblock {\em Modern Stochastics and Applications - conference proceedings},
  2013.

\bibitem{hu_tessitore07}
Y.~Hu and G.~Tessitore.
\newblock {BSDE} on an infinite horizon and elliptic {PDE}s in infinite
  dimension.
\newblock {\em Nonlinear Differential Equations and Applications NoDEA},
  14(5):825--846, 2007.

\bibitem{issoglio13}
E.~Issoglio.
\newblock Transport equations with fractal noise - existence, uniqueness and
  regularity of the solution.
\newblock {\em J. Analysis and its App.}, 32(1):37--53, 2013.

\bibitem{issoglio_zaehle15}
E.~Issoglio and M.~Z\"ahle.
\newblock Regularity of the solutions to {SPDE}s in metric measure spaces.
\newblock {\em Stochastic Partial Differential Equations: Analysis and
  Computations}, 3 (2):272--289, 2015.

\bibitem{kallenberg01}
O.~Kallenberg.
\newblock {\em Foundations of Modern Probability}.
\newblock Probability and Its Applications. Springer, 2nd edition, 2001.

\bibitem{kharroubi-etal.2015}
I.~Kharroubi, N.~Langren{\`e}, and H.~Pham.
\newblock Discrete time approximation of fully nonlinear hjb equations via
  bsdes with nonpositive jumps.
\newblock {\em Ann. Appl. Probab.}, 25(4):2301--2338, 08 2015.

\bibitem{kraichnan68}
R.~H. Kraichnan.
\newblock Small scale structure of a scalar field convected by turbulence.
\newblock {\em Physics of Fluids}, 11(5):945--953, 1968.

\bibitem{kurtz_protter96}
T.~G. Kurtz and P.~E. Protter.
\newblock Weak convergence of stochastic integrals and differential equations.
\newblock {\em Lecture Notes in Math.}, 1627:1--41, 1996.

\bibitem{lejay04}
A.~Lejay.
\newblock A probabilistic representation of the solution of some quasi-linear
  {PDE} with a divergence form operator. application to existence of weak
  solutions of {FBSDE}.
\newblock {\em Stoch. Proc. Appl.}, 110(1):145--176, 2004.

\bibitem{ma_et.al08}
J.~Ma, J.~Zhang, and Z.~Zheng.
\newblock Weak solutions for forward-backward {SDE}s -- a martingale problem
  approach.
\newblock {\em Ann. Probab.}, 36(6):2092--2125, 2008.

\bibitem{pagani15}
C.~Pagani.
\newblock Functional renormalization group approach to the {K}raichnan model.
\newblock {\em Phys. Rev. E}, 92:033016, 2015.

\bibitem{PardouxPeng90}
{\'E}.~Pardoux and S.~G. Peng.
\newblock Adapted solution of a backward stochastic differential equation.
\newblock {\em Systems Control Lett.}, 14(1):55--61, 1990.

\bibitem{PardouxPeng92}
{\'E}.~Pardoux and S.~G. Peng.
\newblock Backward stochastic differential equations and quasilinear parabolic
  partial differential equations.
\newblock {\em Rozuvskii B L, Sowers R B, eds. Stochastic Partial Differential
  Equations and Their Applications. Lect Notes Control Inf Sci.}, 176:200--217,
  1992.

\bibitem{pazy83}
A.~Pazy.
\newblock {\em Semigroups of linear operators and applications to partial
  differential equations}, volume~44 of {\em Applied Mathematical Sciences}.
\newblock Springer-Verlag, New York, 1983.

\bibitem{runst_sickel96}
T.~Runst and W.~Sickel.
\newblock {\em Sobolev spaces of fractional order, {N}emytskij operators, and
  nonlinear partial differential equations}, volume~3 of {\em de Gruyter Series
  in Nonlinear Analysis and Applications}.
\newblock Walter de Gruyter \& Co., Berlin, 1996.

\bibitem{russo_wurzer15}
F.~Russo and L.~Wurzer.
\newblock Elliptic {PDE}s with distributional drift and backward {SDE}s driven
  by a c\`adl\`ag martingale with random terminal time.
\newblock {\em Stoch. Dyn.}, 17(4):1750030 (36 pages), 2017.

\bibitem{triebel78}
H.~Triebel.
\newblock {\em Interpolation theory, function spaces, differential operators}.
\newblock North-Holland Amsterdam, Heidelberg, first edition, 1978.

\bibitem{tsirelson75}
B.~S. Tsirel'son.
\newblock An example of a stochastic differential equation having no strong
  solution.
\newblock {\em Theor. Prob. Appl.}, 20:427--430, 1975.

\bibitem{zhang17}
J.~Zhang.
\newblock {\em Backward Stochastic Differential Equations: From Linear to Fully
  Nonlinear Theory}, volume~86 of {\em Probability Theory and Stochastic
  Modelling}.
\newblock Springer, New York, 2017.

\end{thebibliography}

\end{document}